\RequirePackage{etex}
\RequirePackage{ifpdf}
\ifpdf
\documentclass[12pt]{amsart}
\else
\documentclass[a4paper,dvipdfmx,12pt]{amsart}
\fi

\usepackage[top=30truemm,bottom=30truemm,left=30truemm,right=30truemm]{geometry}

\usepackage{iftex}
\ifLuaTeX
\RequirePackage{luatex85}
\fi

\usepackage[foot]{amsaddr}
\usepackage[nocompress,noadjust]{cite}

\usepackage{tikz}
\usepackage{tikz-cd}
\usetikzlibrary{cd}
\usetikzlibrary{positioning, arrows.meta, matrix}
\usetikzlibrary{decorations.pathreplacing}
\tikzcdset{
  arrow style=tikz,
  arrows=-stealth
}

\usepackage{lipsum}
\usepackage{adjustbox}
\usepackage{fancybox}
\usepackage{listings}
\usepackage{docmute}
\usepackage{amssymb, enumerate, calligra}
\usepackage{amsthm}
\usepackage{amsmath}
\usepackage{amsxtra}
\usepackage{latexsym}
\usepackage{mathrsfs}
\usepackage[all,cmtip]{xy}
\usepackage{enumitem}
\usepackage{xcolor}
\usepackage{graphicx}
\usepackage{comment}
\usepackage{mathabx,epsfig}
\usepackage{stmaryrd}
\usepackage{aliascnt}   
\usepackage{mathtools}

\usepackage[pdfborder={0 0 1},hypertexnames=false,hyperindex,breaklinks]{hyperref}

\makeatletter
\renewcommand\th@plain{\slshape}
\makeatother
\newtheoremstyle{plain}
 {2mm}
 {2mm}
 {\slshape}
 {}
 {\bfseries}
 {.}
 {.5em}
 {}
\theoremstyle{plain}
\newtheorem{theorem}{Theorem}[section]

\newaliascnt{corollary}{theorem}
\newtheorem{corollary}[corollary]{Corollary}
\aliascntresetthe{corollary}

\newaliascnt{lemma}{theorem}
\newtheorem{lemma}[lemma]{Lemma}
\aliascntresetthe{lemma}

\newaliascnt{proposition}{theorem}
\newtheorem{proposition}[proposition]{Proposition}
\aliascntresetthe{proposition}

\newaliascnt{claim}{theorem}

\aliascntresetthe{claim}

\newtheorem*{claim*}{Claim}

\newaliascnt{conjecture}{theorem}
\newtheorem{conjecture}[conjecture]{Conjecture}
\aliascntresetthe{conjecture}

\newaliascnt{question}{theorem}
\newtheorem{question}[question]{Question}
\aliascntresetthe{question}

\newtheoremstyle{definition}
 {2mm}
 {2mm}
 {\normalfont}
 {}
 {\bfseries}
 {.}
 {.5em}
 {}
\theoremstyle{definition}

\newaliascnt{definition}{theorem}
\newtheorem{definition}[definition]{Definition}
\aliascntresetthe{definition}

\newaliascnt{notation}{theorem}

\aliascntresetthe{notation}

\newaliascnt{remark}{theorem}
\newtheorem{remark}[remark]{Remark}
\aliascntresetthe{remark}

\newaliascnt{example}{theorem}

\aliascntresetthe{example}

\newtheorem*{acknowledgements}{Acknowledgements}

\usepackage[nameinlink]{cleveref}
\usepackage{autonum}

\crefname{section}{Section}{Sections}

\crefname{theorem}{Theorem}{Theorems}
\crefname{corollary}{Corollary}{Corollaries}
\crefname{lemma}{Lemma}{Lemmas}
\crefname{proposition}{Proposition}{Propositions}
\crefname{claim}{Claim}{Claims}
\crefname{definition}{Definition}{Definitions}
\crefname{notation}{Notation}{Notations}
\crefname{problem}{Problem}{Problems}
\crefname{note}{Note}{Notes}
\crefname{remark}{Remark}{Remarks}
\crefname{example}{Example}{Examples}
\crefname{conjecture}{Conjecture}{Conjectures}
\crefname{question}{Question}{Questions}

\crefname{enumi}{}{}
\crefname{enumii}{}{}
\crefname{enumiii}{}{}
\crefformat{equation}{{\upshape(#2#1#3)}}

\numberwithin{equation}{section}

\def\C{{\mathbb C}}
\def\Q{{\mathbb Q}}
\def\R{{\mathbb R}}
\def\Z{{\mathbb Z}}
\def\P{{\mathbb P}}
\def\A{{\mathbb A}}
\def\F{{\mathbb F}}

\def\QQ{\overline{\mathbb Q}}
\def\p{{ \mathfrak{p}}}

\def\O{{ \mathcal{O}}}
 
\def\a{{ \mathfrak{a}}}

\def\L{{ \mathcal{L}}}
\def\M{{ \mathcal{M}}}

\def\acts{\ \rotatebox[origin=c]{-90}{$\circlearrowright$}\ }
\def\racts{\ \rotatebox[origin=c]{90}{$\circlearrowleft$}\ }

\renewcommand{\mod}[1]{(\mathrm{mod}\ #1)}

\DeclareMathOperator{\pr}{pr}
\DeclareMathOperator{\Pic}{Pic}

\DeclareMathOperator{\id}{id}

\DeclareMathOperator{\Spec}{Spec}

\DeclareMathOperator{\Supp}{Supp}

\DeclareMathOperator{\Tr}{Tr}

\DeclareMathOperator{\Ima}{Im}
\DeclareMathOperator{\Per}{Per}
\DeclareMathOperator{\Fr}{Fr}
\DeclareMathOperator{\Gal}{Gal}
\DeclareMathOperator{\red}{red}

\usepackage{scalerel}
\DeclareMathAlphabet{\mathpzc}{OT1}{pzc}{m}{it}
\newcommand{\PerSch}{\scaleto{\mathpzc{Per}}{.76em}}

\renewcommand{\a}{\alpha}

\renewcommand{\b}{\beta}

\newcommand{\e}{\varepsilon}
\newcommand{\f}{\varphi}

\newcommand{\s}{\sigma}

\newcommand{\G}{\Gamma}

\allowdisplaybreaks

\begin{document}

	\title[Height boundedness of periodic and preperiodic points]
	{On the height boundedness of periodic and preperiodic points of dominant rational self-maps on projective varieties} 
	
	\author[Yohsuke Matsuzawa]{Yohsuke Matsuzawa}
	\address{Department of Mathematics, Graduate School of Science, Osaka Metropolitan University, 3-3-138, Sugimoto, Sumiyoshi, Osaka, 558-8585, Japan}
	\email{matsuzaway@omu.ac.jp}

    \author[Kaoru Sano]{Kaoru Sano}
    \address{NTT Institute for Fundamental Mathematics, NTT Communication Science Laboratories, NTT, Inc., 2-4, Hikaridai, Seika-cho, Soraku-gun, Kyoto 619-0237, Japan}
    \email{kaoru.sano@ntt.com}

	\begin{abstract} 
        We give a counterexample to the following conjecture: 
        the set of isolated periodic points of an automorphism of degree at least two on an affine space is a set of bounded height.
        As a positive result, we prove that any cohomologically hyperbolic dominant rational self-map on a projective variety admits a non-empty Zariski open subset on which the set of periodic points is height bounded.
		Concerning preperiodic points, we give an example suggesting 
        that the same statement may fail.
	\end{abstract}

	\subjclass[2020]{Primary 37P15; 
		Secondary 
		37P55 
	}	
	
	\keywords{Arithmetic dynamics, Periodic points, Height, cohomologically hyperbolic maps}

	\maketitle
	\tableofcontents

\section{Introduction}\label{sec:intro}
Let $f \colon \P^N_{\QQ} \longrightarrow \P^N_{\QQ}$
be a non-isomorphic surjective morphism defined over the field of
algebraic numbers $\QQ$.
It is well-known that the set of preperiodic points of $f$
in $\P^N(\QQ)$ is a set of bounded height.
Indeed, a point $x \in \P^N(\QQ)$ is preperiodic under $f$
if and only if $\hat{h}_f(x) = 0$, where $\hat{h}_f$
is the canonical height function of $f$.
Since $\hat{h}_f$ is equal to the naive height function up to a bounded function, we get the above-mentioned property.

It is natural to ask when the height boundedness of preperiodic points 
holds if we consider other types of self-maps on algebraic varieties.
For example, for automorphisms on $\A^N_{\QQ}$, there is the following conjecture.
\begin{conjecture}[{\cite[Conjecture 7.21]{silADS}}]\label{conj:htbddperautoaffine}
    Let $f \colon \A^N_{\QQ} \longrightarrow \A^N_{\QQ}$
    be an automorphism such that the maximum degree of the coordinate functions
    is at least two.
    Then the set of isolated periodic points of $f$ has bounded height.
\end{conjecture}
Here, a periodic point $P \in \A^N_{\QQ}$ is said to be isolated 
if it is not in the Zariski closure of $\{Q \in \A^N(\QQ) \mid f^n(Q)=Q\} \setminus \{P\}$
for all $n$. In particular, if the set of $n$-periodic points is finite
for all $n$, then all periodic points are isolated.
We also note that the degree assumption appears unnecessary.
Indeed, if $f$ is an automorphism defined by degree one polynomials,
then $f$ has only finitely many isolated periodic points.
Let us clarify the definition of height boundedness.

\begin{definition}
    Let $X$ be a quasi-projective variety over $\QQ$.
    A subset $S \subset X(\QQ)$ is said to be height bounded if the following holds.
    For an open immersion $i \colon X \hookrightarrow P$ into a projective variety $P$
    and a Weil height function $h_H$ associated with an ample divisor $H$
    on $P$,  the function $h_H \circ i$ is bounded on $S$.
\end{definition}
Note that this definition is independent of the choice of $i$, $H$, and $h_H$
(see for example \cite[Remark 2.2]{MX25}).

\cref{conj:htbddperautoaffine} is known for dimension two \cite{Den95}.
In higher dimensions, it holds for regular affine automorphisms
and triangular automorphisms \cite{Mar00, Mar03}.
See \cite[section 7.1.5]{silADS} for more details about this conjecture and related results.

In this paper, we give a counterexample to \cref{conj:htbddperautoaffine} in dimension three.

\begin{theorem}\label{thmintro:autoA3unbddper}
    Let 
    \begin{align}
        f \colon \A^3_{\Q} \longrightarrow \A^3_{\Q},
        (x,y,z) \mapsto (y, x+y-y^3, 2z-y).
    \end{align}
    Then, the following statements hold.
    \begin{enumerate}
        \item For any $n \geq 1$, the set of $n$-periodic points of $f$ is finite.
        \item The set of periodic points $\{P \in \A^3(\QQ) \mid \text{$f^n(P)=P$ for some $n \geq 1$}\}$ is height unbounded.
    \end{enumerate}
\end{theorem}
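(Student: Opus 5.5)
The map has a skew-product structure: $f(x,y,z)=(g(x,y),\,2z-y)$ with $g(x,y)=(y,\,x+y-y^3)$, a Hénon-type automorphism of $\A^2$. I will exploit this by treating the $z$-coordinate as a fiberwise affine map over $g$. Write $P=(x,y,z)$ and $(x_k,y_k)=g^k(x,y)$. Then $z_{k+1}=2z_k-y_k$, so
\begin{align}
z_n=2^n z-\sum_{k=0}^{n-1}2^{n-1-k}y_k .
\end{align}
Hence $P$ is $n$-periodic if and only if $(x,y)$ is $n$-periodic for $g$ and
\begin{align}
z=\frac{1}{2^n-1}\sum_{k=0}^{n-1}2^{n-1-k}y_k .
\end{align}

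\textbf{Part (1).} Since $2^n-1\neq 0$, each $n$-periodic point of $g$ lifts to exactly one $n$-periodic point of $f$. So it suffices that $g$ has finitely many $n$-periodic points for every $n$. Here $g$ is a Hénon map: it is conjugate to a composition of an elementary map $(x,y)\mapsto(y,\,x+p(y))$ with $\deg p=3$. Finiteness of periodic points of such maps is standard; for instance, the extension to $\P^2$ has the point at infinity as a superattracting fixed point, and Bézout-type counts show that $\mathrm{Fix}(g^n)$ has exactly $3^n$ points counted with multiplicity. Alternatively, I can cite that for dimension two the conjecture holds \cite{Den95}, since the periodic points there are isolated. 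I would just cite finiteness for Hénon maps.

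\textbf{Part (2).} Periodic points of $g$ have bounded height \cite{Den95}, so $x$ and $y$ are bounded. The unboundedness must therefore come from $z$, and specifically from its non-archimedean size at the prime $2$ and the primes dividing $2^n-1$. The idea is that $z$ is a rational combination of algebraic numbers of bounded height with denominator $2^n-1$, and $2^n-1$ has primes that do not cancel.

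The plan is as follows. Take $N(n)=\sum_k 2^{n-1-k}y_k$. For a prime $q\mid 2^n-1$, I want to show that $v_q(N)$ is small at some place above $q$, so that $z$ has large denominator there. Note that $g$ is defined over $\Z$, so $y_k$ is integral at all finite places, since periodic points of $g$ are integral (Denis-type argument). Reducing mod $q$, the point $(x,y)$ becomes a periodic point of $\bar g$ over $\overline{\F}_q$ with period dividing $n$. I would choose periodic points of $g$ of exact period $n$ whose reductions have small period $m$. Then $N\equiv \frac{2^n-1}{2^m-1}\sum_{k<m}2^{m-1-k}\bar y_k \pmod{\mathfrak q}$.

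A cleaner route is to pick $n=m\ell$ and use the fixed points. The fixed points of $g$ satisfy $x=y$ and $y^3=y$, so $(0,0)$ and $(\pm1,\pm1)$. For $(1,1)$ one gets $z=1$. Near this point, take periodic points of $g$ of period $n$ that are congruent to $(1,1)$ modulo primes above $q$. Then $N\equiv 2^n-1\equiv 0$, which is useless.

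So instead I use the local height directly. Summing over all Galois conjugates, the total height contribution $\sum_{v} \log^+|z|_v$ over the $n$-periodic set should be estimated via $h(z)\ge$ the $2$-adic or archimedean part. At the archimedean places, $|z|\le C$ uniformly since $|y_k|$ are bounded on the bounded filled Julia set. Thus unboundedness must come from finite primes $q\mid 2^n-1$, where $|z|_v=|N|_v\,|2^n-1|_v^{-1}$. I will average over the whole Galois orbit of $\mathrm{Per}_n(g)$, using $\prod$ over periodic points (a resultant). Then $\sum_{P\in \mathrm{Fix}(g^n)}N(P)$ equals a trace, computable via the trace $\sum y$ over $\mathrm{Fix}(g^n)$, which is a Lefschetz-type symmetric function. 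If this trace is not divisible by a large part of $2^n-1$, then the average height of $z$ is at least roughly $\frac{1}{3^n}\log$ of that part. This may decay, however, so instead I would take the norm/resultant $\prod_P N(P)$ and compare it with $(2^n-1)^{3^n}$.

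The main obstacle is exactly this arithmetic step: showing that $N(P)$ is not highly divisible by $2^n-1$ for at least one $P$ in each relevant $n$. I would first try to compute explicitly modulo $q$ with $n=q-1$, or with $n$ such that $2^n-1$ is prime, using periodic points of $\bar g$ over $\F_q$ and Fermat's little theorem, and then check numerically for small $n$.
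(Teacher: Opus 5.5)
\textbf{Part (1) is fine; part (2) has a genuine gap.} Part (1) and your reduction of part (2) to the numerator $N=\sum_{k<n}2^{n-1-k}y_k$ match the paper: each $g$-periodic point lifts uniquely, $\Per_n$ is finite for the H\'enon map $g$, and $g$-periodic points are integral. Citing Denis for finiteness is circular, since it presupposes isolatedness, but the standard finiteness result for H\'enon maps suffices.

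Part (2), however, is not proved. Your proposal stops at exactly the step that carries all the content, and none of the routes you sketch can close it.
\begin{itemize}
\item The prime $2$ contributes nothing, because $2^n-1$ is odd and $N$ is integral.
\item Averaging is useless. Since $g$ permutes $\mathrm{Fix}(g^n)$, we have $\sum_{P}N(P)=(2^n-1)\sum_{P}y(P)$, which is always divisible by $2^n-1$. So you are forced to single out particular Galois orbits. The resultant comparison has the right shape, but it needs exactly the information about $N$ modulo powers of $q$ that you lack.
\item Your small-period congruence gives $N\equiv 0$ modulo $\mathfrak q$ whenever $q\mid (2^n-1)/(2^m-1)$. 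Otherwise it gains nothing beyond $v_q(2^m-1)$.
\item It is not known that there are infinitely many Mersenne primes.
\item For a general prime $q$, the reduction of $g$ is a nonlinear map over $\F_q$ whose periodic points you have no handle on.
\end{itemize}
Your target ``$v_q(N)$ small at some place above $q$'' is also too weak. A single place carries weight $[K_v:\Q_q]/[K:\Q]$, and $[K:\Q]\to\infty$ necessarily, by bounded height of $(x,y)$ plus Northcott. What you need is $|N|_v=1$ on a positive proportion of the places above $q$, weighted by local degree, along a sequence of $n$ with $v_q(2^n-1)\log q\to\infty$.

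\textbf{The missing idea is to work at $q=3$.} Over $\F_3$ the polynomial $y-y^3=-(\Fr_3-\id)(y)$ is additive, so $g$ modulo $3$ is $\F_3$-linear. It is given by $L=\begin{pmatrix}0&1\\1&-D\end{pmatrix}$ with $D=\Fr_3-\id$; $3$ is the only prime for which this happens.
\begin{itemize}
\item Take $n=2\cdot 3^r$ and $m=3^r$. Lifting the exponent gives $v_3(2^n-1)=r+1$.
\item The identity $L^2=1-DL$ yields $L^n=1-D^mL^m$. Hence reductions of $n$-periodic points lie in $\F_{3^m}^2$, and every point of $\F_{3^m}^2$ is $n$-periodic for the reduced map.
\item The Jacobian of $g$ modulo $3$ is the constant matrix $\begin{pmatrix}0&1\\1&1\end{pmatrix}$ of order $8$, so $\PerSch_n$ is \'etale over $3$ because $8\nmid n$.
\item Hensel lifting then shows two things: reduction is a bijection from the $3^n$ periodic points onto $\F_{3^m}^2$, and $K_n/\Q$ is unramified at $3$ with all residue degrees equal to $m$.
\item The factorization $\sum_{i<n}(-t)^i=(1-t)(1-t^2)^{m-1}$ in $\F_3[t]$ gives $\overline{N}=\pm\Tr_{\F_{3^m}/\F_3}(\overline{x_0}-\overline{y_0})$, a surjective linear functional.
\item Hence some Galois orbit has nonzero trace for at least two thirds of its points. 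Transitivity of Galois on the places above $3$ converts this into $|z|_v=3^{r+1}$ on at least two thirds of those places, all of equal local degree.
\item This gives $h\ge\frac{2}{3}(r+1)\log 3\to\infty$.
\end{itemize}
Without a linearizing prime whose exponent in $2^n-1$ grows, your sketch has no mechanism that produces a growing lower bound.
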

We actually prove that there is a sequence of periodic points that is Zariski dense, and
the height goes to infinity. See \cref{thm:autoA3unbddper}.
The idea of this construction is as follows.
The first two coordinates of our automorphism $f$ form an automorphism $g$
of $\A^2$, and it is an H\'enon map.
Moreover periodic points of $f$ have one-to-one correspondence 
to that of $g$ via projection, and $z$-coordinates have explicit expression 
in terms of the coordinates of $g$-periodic points.
We construct $f$ so that $g$ becomes a linear map after reduction mod $3$.
This structure provides enough information to show that the $z$-coordinates have a large height.

This counterexample naturally leads to the question of when the heights of (pre)periodic points are bounded.
It has turned out that cohomological hyperbolicity plays a key role in this question.

\begin{definition}
    A dominant rational map $f \colon X \dashrightarrow X$
    on a projective variety $X$ defined over an algebraically closed field 
    is said to be cohomologically hyperbolic if there is a unique 
    $p \in \{1, \dots, \dim X\}$ such that
    \begin{align}
        d_p(f) = \max\{d_i(f) \mid 1 \leq i \leq \dim X\}.
    \end{align}
    Here $d_i(f)$ denotes the $i$-th dynamical degree of $f$.
    In the above case, we say $f$ is $p$-cohomologically hyperbolic.
    See for example \cite{Da20,Tr20,DS05} for definitions and basic properties of 
    dynamical degrees.
\end{definition}

We note that the map $f$ in \cref{thmintro:autoA3unbddper} is not 
cohomologically hyperbolic. 
Indeed, we prove $d_1(f) = d_2(f) = 3$ in \cref{thm:autoA3unbddper}.
To state our next theorems, let us introduce some notation.

\begin{definition}
    Let $f \colon X \dashrightarrow X$ be a dominant rational map 
    on a quasi-projective variety $X$ defined over an algebraically closed field $k$.
    The set of $k$-points whose forward $f$-orbit is well-defined is denoted by
    \begin{align}
        X_f(k) \coloneqq \{P \in X(k) \mid \text{$f^n(P) \not\in I_f$ for all $n \geq 0$}\}
    \end{align}
    where $I_f$ is the indeterminacy locus of $f$.
    For a point $P \in X_f(k)$, its orbit is denoted by
    $O_f(P) \coloneqq \{f^n(P) \mid n \geq 0\}$.
\end{definition}

Wang and the first author proved the following height boundedness for cohomologically hyperbolic maps.

\begin{theorem}[Wang, Matsuzawa-Wang]\label{thm:WM}
    Let $f \colon X \dashrightarrow X$ be a cohomologically hyperbolic 
    dominant rational map on a smooth projective variety over $\QQ$.
    \begin{enumerate}
        \item {\rm (\cite[Theorem 1.1]{Wa22})} Suppose that $f$ is birational. Then there is a non-empty Zariski open subset
        $U \subset X$ such that the set
        \begin{align}
            \{ P \in X_f(\QQ) \cap X_{f^{-1}}(\QQ) \mid \text{$P$ is $f$-periodic and $O_f(P) \subset U$} \}
        \end{align}
        is height bounded.

        \item\label{MWdcohohyppreper} {\rm (\cite[Theorem F]{MW22})} Suppose that $f$ is $\dim X$-cohomologically hyperbolic.
        Then there is a non-empty Zariski open subset $U \subset X$ such that the set
        \begin{align}
            \{ P \in X_f(\QQ) \mid \text{$P$ is $f$-preperiodic and $O_f(P) \subset U$} \}
        \end{align}
        is height bounded.
    \end{enumerate}
\end{theorem}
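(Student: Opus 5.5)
\emph{Note.} The statement above is a citation of \cite[Theorem 1.1]{Wa22} and \cite[Theorem F]{MW22}, so the paper itself gives no proof of it. What follows is only my sketch of how I would approach the two parts; I have not checked it against those papers.

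The plan for both parts is the same: produce a ``convexity'' height inequality along orbits that stay in a suitable open set $U$, and then exploit the finiteness of periodic (or preperiodic) orbits. I start with (1), where $f$ is birational and $p$-cohomologically hyperbolic.

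\textbf{Part (1): the height inequality.} Since $d_0(f)=d_{\dim X}(f)=1$ and $p$ is the unique maximiser, we get $\lambda\coloneqq\max_i d_i(f)>1$. For birational $f$ we also have $d_i(f^{-1})=d_{\dim X-i}(f)$. I would first aim to show that $\max\{d_1(f),d_1(f^{-1})\}>1$. Then I would fix an ample $H$ and find $m\geq1$, $\delta>0$ and an effective divisor $E$ on a resolution of $f^{m}$ and $f^{-m}$ such that
\begin{align}
(f^{m})^*H+(f^{-m})^*H-(2+\delta)H\sim_{\mathbb{Q}} E .
\end{align}
Put $U\coloneqq X\setminus\bigl(\operatorname{Supp}(\text{image of }E)\cup I_{f^m}\cup I_{f^{-m}}\bigr)$. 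Functoriality of Weil heights, together with positivity of $h_E$ off $\operatorname{Supp}E$, then gives a constant $C$ with
\begin{align}
h_H(f^m(Q))+h_H(f^{-m}(Q))\ \geq\ (2+\delta)h_H(Q)-C
\end{align}
for every $Q\in U(\QQ)$ at which both sides are defined.

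\textbf{Part (1): concluding.} Let $P$ be periodic with $O_f(P)\subset U$, and set $a_k\coloneqq h_H(f^{km}(P))$. The sequence $(a_k)$ is periodic, so it attains a maximum $a_{k_0}$. Applying the inequality at $Q=f^{k_0m}(P)$ gives $2a_{k_0}\geq(2+\delta)a_{k_0}-C$. Hence $a_{k_0}\leq C/\delta$, and so $h_H\leq C/\delta$ on the whole orbit of $f^m$ through $P$. Replacing $P$ by $f^j(P)$ for $0\leq j<m$ covers the full $f$-orbit. Thus the height bound is uniform.

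\textbf{Part (2).} Here $f$ is $\dim X$-cohomologically hyperbolic, so it has large topological degree, and we only have forward orbits. I would use a one-sided analogue of the inequality: there exist $m$, $\delta$ and $U$ with $h_H(f^m(Q))\geq(1+\delta)h_H(Q)-C$ on $U$. If $P$ is preperiodic with $O_f(P)\subset U$, then the orbit is finite and its height is maximised at some point $Q$. Since $f^m(Q)$ also lies in the orbit, $h_H(Q)\geq h_H(f^m(Q))\geq(1+\delta)h_H(Q)-C$, which forces $h_H(Q)\leq C/\delta$. This bounds the height of every point of the orbit, not only the periodic part.

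\textbf{Main obstacle.} The hard step is the positivity statement: that $(f^{m})^*H+(f^{-m})^*H-(2+\delta)H$ is big (and its one-sided analogue in Part (2)). The heuristic is easy. Intersecting with $H^{\dim X-1}$ gives growth like $d_1(f)^m+d_{\dim X-1}(f)^m\gg 2$, but that alone does not give bigness. I would pass to a resolution $\Gamma$ of $f^m$ on which both pullbacks are nef, and apply Siu's criterion: for nef $A$ and $B$, the class $A-B$ is big if $A^{\dim X}>\dim X\, A^{\dim X-1}\cdot B$. The intersection numbers of the pullback classes would be controlled by the mixed degrees $d_i(f^m)\approx d_i(f)^m$, using the Dinh--Sibony / Dang--Truong estimates. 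Uniqueness of the maximal degree should be exactly what makes the left side dominate. If this fails for $d_1$ directly, I would instead work with the $p$-th degree, applying the same convexity argument to heights of cycles.

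The one-sided case is the same step with less symmetry: there I would need $(f^m)^*H-(1+\delta)H$ to be big for suitable $m$. The dominant term is $d_{\dim X}(f)^m$, which should come from the self-intersection $((f^m)^*H)^{\dim X}=d_{\dim X}(f^m)H^{\dim X}$.
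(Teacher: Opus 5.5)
The paper does not prove this statement; it is quoted from \cite{Wa22} and \cite{MW22}. The relevant comparison is with the proof of \cref{thm:perptishtbdd}, which contains part (1) as a special case. Your route differs from it but is sound.

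The paper uses only forward iterates. Xie's theorem makes $(f^{2m})^*L+\alpha\beta L-(\alpha+\beta)(f^m)^*L$ big, which gives a second-order recursion for $h_L$ along orbits. This is split into $\Phi(f^m(P))\ge\alpha\Phi(P)$ and $\Psi(f^m(P))\ge\beta\Psi(P)$, and periodicity then forces $\Phi(P)\le 0\le\Psi(P)$. You instead use the two-sided inequality $h_H\circ f^m+h_H\circ f^{-m}\ge(2+\delta)h_H-C$ together with a maximum argument. That argument implicitly uses $f^{-m}(f^m(R))=R$ for $R\in U$, which holds because $f^{-m}\circ f^m$ is a morphism near $R$ agreeing generically with the identity.

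The paper's argument needs no inverse. So it covers non-birational maps and drops the hypothesis $P\in X_{f^{-1}}(\QQ)$, at the price of Xie's theorem in $\widetilde{\Pic}(X)_\R$. Yours needs only Siu's inequality and degree estimates, but part (1) is intrinsically limited to birational $f$. Your one-sided inequality in part (2) also explains why preperiodic points are accessible when $p=\dim X$: there is no contracting direction ($\mu_{\dim X+1}=0$). The $\Psi$-recursion, by contrast, only controls points that return to themselves (compare \cref{thmintro:unbddpreper}).

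\medskip
Your positivity step, which you leave heuristic, does go through via Siu. For part (2) your sketch is already complete. Put $n=\dim X$, take a resolution $\pi_0\colon\Gamma\to X$ with $\pi_+=f^m\circ\pi_0$ a morphism. Then $(\pi_+^*H)^n=d_n(f)^mH^n$, while $n(1+\delta)(\pi_+^*H)^{n-1}\cdot\pi_0^*H=n(1+\delta)\deg_{n-1}(f^m)\le C(d_{n-1}(f)+\e)^m$, so $d_n(f)>d_{n-1}(f)$ suffices.

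In part (1) you should make the bookkeeping explicit, because the relevant degrees are not those of $f^m$. Set $\pi_-=f^{-m}\circ\pi_0$ and $A=\pi_+^*H+\pi_-^*H$. Since $\pi_+=f^{2m}\circ\pi_-$, the $k$-th term of $A^n$ is $\binom{n}{k}\deg_k(f^{2m})$, so $A^n\ge(d_p(f)-\e)^{2m}$ for $m\gg0$. Each term $(\pi_+^*H)^k\cdot(\pi_-^*H)^{n-1-k}\cdot\pi_0^*H$ of $A^{n-1}\cdot\pi_0^*H$ can be bounded by a reverse Khovanskii--Teissier inequality whose constant is independent of the model (as in Dang's proof of submultiplicativity). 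This gives the bound $C\deg_k(f^m)\deg_{n-1-k}(f^{-m})\le C\bigl((d_k(f)+\e)(d_{k+1}(f)+\e)\bigr)^m$, using $d_{n-1-k}(f^{-1})=d_{k+1}(f)$. Uniqueness of the maximal dynamical degree amounts to $d_kd_{k+1}<d_p^2$ for all $k$, so Siu's criterion applies and no passage to cycles is needed. The sharp mixed bound matters for $k=p$; cruder estimates only give order $d_p^{2m}$ there.

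Alternatively, you can get the symmetric bigness from the paper's own tool. Apply $(f^{-m})^*$ to the class $(f^{2m})^*L+(\mu_p\mu_{p+1})^mL-(\e\mu_p)^m(f^m)^*L$. This shows that $(f^m)^*L+(\mu_p\mu_{p+1})^m(f^{-m})^*L-(\e\mu_p)^mL$ is big, and the unequal weight is absorbed by adding a multiple of the big class $(f^{\pm m})^*L$.
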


\begin{remark}
    Note that since the statement of \cref{thm:WM}
    is about the existence of Zariski open subsets, the same conclusion holds after replacing $X$
    with a birationally equivalent variety (note that height boundedness is birationally invariant).
    Namely, \cref{thm:WM} holds for an arbitrary projective variety $X$.
\end{remark}

We prove that the same height boundedness of periodic points 
hold without assuming birationality.

\begin{theorem}\label{thm:perptishtbdd}
    Let $f \colon X \dashrightarrow X$ be a cohomologically hyperbolic map on a projective variety over $\QQ$. 
    Then there is a non-empty Zariski open subset $U \subset X$
    such that the set
    \begin{align}
        \{ P \in X_f(\QQ) \mid \text{$P$ is $f$-periodic and $O_f(P) \subset U$  }\} \label{setofperptinU}
    \end{align}
    is height bounded.
\end{theorem}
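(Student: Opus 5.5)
The plan is to reduce to \cref{thm:WM}\eqref{MWdcohohyppreper}, the top-degree case, by reversing time on periodic orbits. If $f$ is $\dim X$-cohomologically hyperbolic, the statement follows directly from \cref{thm:WM}\eqref{MWdcohohyppreper}, since periodic points are preperiodic. After a birational modification (allowed, as noted in the remark following \cref{thm:WM}), I would assume $X$ is smooth. Write $n=\dim X$ and let $f$ be $p$-cohomologically hyperbolic with $p<n$.

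The observation driving the reduction is that a periodic orbit is also an orbit of the ``inverse'' dynamics. Let $\Gamma_f\subset X\times X$ be the graph and consider the transposed correspondence $f^{T}=\Gamma_f^{t}$. Its dynamical degrees are, up to normalization by the topological degree $d_n(f)$, the reversed ones: $d_i(f^T)=d_{n-i}(f)/d_n(f)\cdot d_n(f)$-type quantities, so that $i\mapsto d_i(f^T)$ peaks uniquely at $n-p$. Thus $f^T$ is $(n-p)$-cohomologically hyperbolic. Periodic points of $f$ are periodic points of $f^T$: if $f^m(P)=P$, then $P$ lies in $f^{-m}(P)$.

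To reach the top-degree situation, I would combine the two directions on a product. On $Y=X\times X$, consider the correspondence $F=f\times f^{T}$, with $d_k(F)=\max_{i+j=k}d_i(f)d_j(f^T)$. Using the reversed profile of $f^T$, the maximum over all $k$ is attained uniquely at $k=p+(n-p)=n$. This is not yet $\dim Y=2n$. So the step I would actually pursue is restricting to a suitable invariant $n$-dimensional subvariety: the graph $\Gamma_f$ itself, or more precisely the correspondence on $\Gamma_f$ given by $(P,f(P))\mapsto(f(P),f^2(P))$ together with its transpose. The goal is a dynamical system on an $n$-dimensional variety $Z$ birational to a cover of $X$, whose maximal dynamical degree is the top one. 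A periodic point $P$ of $f$ gives a periodic, hence preperiodic, point of this system. Its orbit avoids the bad locus if $O_f(P)$ stays in a suitable open set $U\subset X$, namely the complement of the images of the exceptional and indeterminacy loci of the construction. Height boundedness transfers back to $X$ through the projection $Z\to X$ by functoriality of Weil heights.

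The main obstacle is the middle step. \cref{thm:WM}\eqref{MWdcohohyppreper} is stated for rational self-maps, not correspondences, so I would need to redo its proof for correspondences. My expectation is that this generalizes. The argument in \cite{MW22} uses the top-degree hyperbolicity to produce a big class $D$ with $f^*D-\lambda D$ effective modulo a controlled error, for some $\lambda>1$, and then a Northcott-type telescoping along preperiodic orbits. For a correspondence, one would replace the pullback of divisors by push-pull through the graph and use the height inequality $h_D(Q)\ge \lambda h_D(P)-C$ for $Q\in f^{T}(P)$ outside a proper closed set. Periodicity then gives, summing around the cycle, $(\lambda-1)\sum h_D\le C\cdot(\text{period})$, which bounds the height of each point of the orbit. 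Since $D$ is big, heights are bounded on the complement of its augmented base locus, and that complement is where the open set $U$ comes from. Concretely, I would first verify the eigen-divisor construction for $f^{T}$ using the log-concavity of $i\mapsto d_i(f)$ and the uniqueness of the maximum at $p$. That is the step I would attack first.
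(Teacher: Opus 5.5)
There is a genuine gap. The reduction to the top-degree case never materializes, and the step you propose to attack first cannot work.

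\textbf{The reduction.} Since $d_i(f^{T})=d_{n-i}(f)$, the transpose can never be $n$-cohomologically hyperbolic, because $d_n(f^T)=d_0(f)=1\le d_1(f)=d_{n-1}(f^T)$. The product $f\times f^T$ peaks at $k=n<2n$. The shift $(P,f(P))\mapsto(f(P),f^2(P))$ on $\Gamma_f$ is birationally conjugate to $f$ via the first projection, so it has exactly the dynamical degrees of $f$, peaking at $p<n$. So no $n$-dimensional system with its maximum at the top degree is produced.

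\textbf{The fallback.} You want a big $D$ with $h_D(Q)\ge\lambda h_D(f(Q))-C$, $\lambda>1$, on a dense open set. Such a $D$ does not exist for \emph{any} dominant rational self-map. A height bounded below on a dense open set forces $D-\lambda f^*D$ to be pseudo-effective. Pulling back and telescoping in $\widetilde{\Pic}(X)_\R$ gives $D-\lambda^k(f^k)^*D$ pseudo-effective for all $k$. Take $D-\delta H$ pseudo-effective with $H$ ample and intersect with $H^{n-1}$: this gives $D\cdot H^{n-1}\ge\lambda^k\delta\deg_1(f^k)\ge\lambda^k\delta$ for all $k$, which is absurd. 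Classes that expand under backward dynamics, such as the Green class $\theta^-$ of a H\'enon map, are only nef, not big, so bounding their heights bounds nothing by itself.

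\textbf{The missing idea.} The two-sided recursive inequality already encodes both time directions, and you must use both of its roots. By \cite[Theorem 3.7]{xie2024algebraic}, for $\varepsilon$ close to $1$ and $m$ large, $(f^{2m})^*L+\alpha\beta L-(\alpha+\beta)(f^m)^*L$ is big, where $\alpha=(\varepsilon^2\mu_p)^m>1>\beta=(\varepsilon^{-2}\mu_{p+1})^m$. Hence $h_L\circ f^{2m}+\alpha\beta h_L-(\alpha+\beta)h_L\circ f^m\ge c$ on some open $U$. This factors as $\Phi\circ f^m\ge\alpha\Phi$ and $\Psi\circ f^m\ge\beta\Psi$, with $\Phi=h_L\circ f^m-\beta h_L-c_1$ and $\Psi=h_L\circ f^m-\alpha h_L-c_2$.

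Let $P$ be periodic with $O_f(P)\subset U$. The first inequality gives $\Phi(P)\le 0$, since $\Phi$ takes finitely many values on the orbit. The second is where time reversal genuinely enters: $\Psi$ expands at rate $\beta^{-1}$ backward, and $\Psi(P)=\Psi(f^{mN}(P))\ge\beta^N\Psi(P)$ forces $\Psi(P)\ge 0$. Together, $\alpha h_L(P)+c_2\le h_L(f^m(P))\le\beta h_L(P)+c_1$, so $h_L(P)\le(c_1-c_2)/(\alpha-\beta)$.

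Neither $\Phi$ nor $\Psi$ is the height of a big divisor; the bound appears only by combining them. This is why a one-root eigen-divisor argument cannot substitute. That one-root mechanism is exactly the one behind Theorem~\ref{thm:WM}~(\ref{MWdcohohyppreper}), where $\mu_{n+1}=0$ makes $\beta=0$.
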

The idea and key ingredient of the proof are the same 
with previous works \cite{MW22,Wa22}, although we use terminologies
from \cite{xie2024algebraic} in this paper.
The key ingredient is the recursive inequality of heights along the orbit
that is proven in \cite{MW22} and \cite{xie2024algebraic}.
This proof also allows us to give an upper bound of
the heights of points of \cref{setofperptinU} in terms of ``height of the map $f$''.
See \cref{thm:easyuniformboundofhtofperpt} for the precise statement.

For the sake of consistency with the isolatedness assumption in 
\cref{conj:htbddperautoaffine}, we remark that it is known that periodic points of a cohomologically hyperbolic map are generically isolated.
More precisely, 
in the setting of \cref{thm:perptishtbdd}, there is a non-empty open subset $V \subset X$
such that every periodic point $P \in X_f(\QQ)$ satisfying $O_f(P) \subset V$ is
not contained in the closure of $\{Q \in X_f(\QQ) \mid f^n(Q)=Q \} \setminus \{P\}$
for all $n \geq 1$ (see \cite[Corollary 6.6]{xie2024algebraic}).

As for preperiodic points, we construct an example 
that suggests that the condition ``$\dim X$-cohomologically hyperbolic''
in \cref{thm:WM} (\cref{MWdcohohyppreper}) might be essential.

\begin{theorem}\label{thmintro:unbddpreper}
    Let $d \geq 3$ and $p$ be a prime number. Let
    \begin{align}
        f  \colon \P^2_{\QQ} \dashrightarrow \P^2_{\QQ}, (x:y:z) 
        \mapsto \bigg(x^d - \frac{1}{p^{d-3}}y^2z^{d-2} : xy^2z^{d-3} : y^2 z^{d-2}  \bigg).
    \end{align}
    \begin{enumerate}
        \item We have $d_1(f) = d$, $d_2(f)=2$. In particular, $f$ is $1$-cohomologically hyperbolic.
        \item Let $d = 4$, $p=2$. Let $P_0=(\frac{1+\sqrt{3}}{2}:\frac{1+\sqrt{3}}{2}:1)$.
        Then $f(P_0)=P_0$, and there is a sequence of points $\{P_n\}_{n \geq 1}$
        of $(\P^2 \setminus I_f)(\QQ)$
        such that 
        \begin{align}
            &f(P_n)= P_{n-1} \quad \text{for $n \geq 1$},\\
            &\text{$\{P_n \mid n \geq 0 \}$ is Zariski dense}, \text{ and}\\
            &\lim_{n \to \infty} h(P_n) = \infty,
        \end{align}
        where $h$ is  the naive height function on $\P^2(\QQ)$.
    \end{enumerate}
\end{theorem}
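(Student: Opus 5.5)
The plan has three parts: a direct computation of the dynamical degrees, an explicit backward orbit whose height is controlled through a recursion, and a Zariski-density argument.

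\emph{Dynamical degrees.} In the chart $z=1$, write $c=p^{-(d-3)}$. The map becomes $(x,y)\mapsto (x^d/y^2-c,\;x)$. Given a target $(u,v)$, the preimages are determined by $x=v$ and $y^2=v^d/(u+c)$. So a general point has exactly two preimages, and $d_2(f)=2$.

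For $d_1$, I will show that $f$ is algebraically stable, so that $\deg f^n=d^n$ and hence $d_1(f)=d$.
\begin{itemize}
\item The curves contracted by $f$ are among the lines $y=0$ and $z=0$.
\item Both lines are sent to the point $(1:0:0)$: on $y=0$ and on $z=0$ the image is $(x^d:0:0)$.
\item The point $(1:0:0)$ is fixed by $f$ and does not lie in $I_f\subset\{(0:0:1),(0:1:0)\}$.
\end{itemize}
Therefore no iterate of a contracted curve lands in $I_f$, which is the algebraic stability criterion. Since $d>2$, the map is $1$-cohomologically hyperbolic.

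\emph{Backward orbit for $d=4$, $p=2$.} Here $c=1/2$. The point $P_0=(y_0,y_0)$ with $y_0=\tfrac{1+\sqrt3}{2}$ is fixed because $y_0^2=y_0+\tfrac12$. Write $P_n=(x_n,y_n)$. The preimage formula forces $x_n=y_{n-1}$ and
\begin{align}
  y_n^2=\frac{y_{n-1}^4}{y_{n-2}+\tfrac12},
\end{align}
where at each step we may choose the sign of $y_n$. All $P_n$ must avoid $I_f$ and the line $y=0$, which holds once $y_n\neq0$.

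A $2$-adic approach looks hopeless. One checks that $v_2(y_n)=-\tfrac12$ persists for every choice of signs, because $v_2(y+\tfrac12)=-1$ whenever $v_2(y)=-\tfrac12$. So I will work archimedeanly, but uniformly over all embeddings $\sigma$. This is needed because the naive height is normalized by the degree of the field, which grows with $n$.

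The key observation is that for each embedding $\sigma\colon\QQ\to\C$, the sequence $\sigma(y_n)$ satisfies the same recursion up to signs. Using $|\sigma(y_{n-2})+\tfrac12|\le|\sigma(y_{n-2})|+\tfrac12$, the quantity $a_n=\log|\sigma(y_n)|$ satisfies a sign-independent inequality:
\begin{align}
  a_n\ \ge\ 2a_{n-1}-\tfrac12\log\bigl(e^{a_{n-2}}+\tfrac12\bigr).
\end{align}
Once $a_{n-1}$ and $a_{n-2}$ both exceed a suitable threshold (e.g. $\log 2$), induction gives growth at a rate roughly $(1+1/\sqrt2)^n$. This holds for every conjugate simultaneously. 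Averaging over the archimedean places then gives $h(P_n)\ge h(y_{n-1})\to\infty$, using $h(x:y:1)\ge h(y)$.

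\emph{Expected main obstacle: the initial segment.} The start of the recursion must push every conjugate above the threshold. The conjugate $\tfrac{1-\sqrt3}{2}\approx-0.37$ is small, and for small values the denominator $\sigma(y_{n-2})+\tfrac12$ may also be small or large in an uncontrolled way. What I would try first is to fix the sign choices by hand for $y_1=-y_0,\,y_2,\,y_3,\dots$ and compute numerically the finitely many conjugates of the first few $y_n$. These lie in an explicit tower of quadratic extensions of $\Q(\sqrt3)$. One sample computation, $y_1=-y_0$, $|y_2|=|y_0|$, $|y_3|\approx2$, suggests that after three or four steps all conjugates exceed the threshold. If some conjugate refuses, I would replace the bound $|\sigma(y)+\tfrac12|\le|\sigma(y)|+\tfrac12$ at the bad step by an exact evaluation.

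\emph{Zariski density.} Since the heights tend to infinity, the set $\{P_n\}$ is infinite. Suppose its Zariski closure has dimension at most one, and let $C$ be its one-dimensional part. Because $f(P_n)=P_{n-1}$, some component of $C$ is $f$-periodic and contains infinitely many $P_n$. I would rule out such a curve using the explicit form of $f$. On a periodic curve through the fixed point $P_0$, the backward orbit has degree-bounded branching, whereas our branches come from the square roots $\sqrt{y_{n-2}+1/2}$, which generate field extensions of unbounded degree. Alternatively, I would compare with the height growth on an invariant curve, which is at most exponential in $n$ for a map of bounded degree. This argument is less certain than the others, and I would check it carefully.
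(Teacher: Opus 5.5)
Your part (1) follows the paper's argument: a general point has two preimages, and $f$ is algebraically stable. However, your list of contracted curves is wrong. The line $x=0$ is also contracted, to $(-p^{-(d-3)}:0:1)$. For $d=3$, the line $z=0$ is mapped to itself rather than to $(1:0:0)$. Since $(-p^{-(d-3)}:0:1)\mapsto(1:0:0)\notin I_f$, stability still holds. The genuine gaps are in part (2).

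\textbf{Height growth.} Your argument requires every archimedean conjugate of $y_n$ to become large, and this is false. Take any place $w$ lying over the embedding $\sqrt{3}\mapsto-\sqrt{3}$. There $|y_0|_w=(\sqrt{3}-1)/2$. With your $y_1=-y_0$ and any later signs, $|y_3|_w^2=|y_0|_w^4/(\sqrt{3}/2)$, so $|y_3|_w\approx0.14$ and $|y_4|_w<0.06$. Once two consecutive values are below $\tfrac14$, the relation $|y_n|_w^2=|y_{n-1}|_w^4/|y_{n-2}+\tfrac12|_w$ gives $|y_n|_w<2|y_{n-1}|_w^2$. So these conjugates tend to $0$ doubly exponentially. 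They carry half of the archimedean weight, and no exact evaluation changes this.

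The missing idea is that you never need all places. Let $v$ be a place of $K_{n_0}=\Q(x_{n_0},y_{n_0})$. For every $m\ge n_0$, the places $w\mid v$ of $K_m\supset K_{n_0}$ have total weight $\sum_{w\mid v}[(K_m)_w:\R]/[K_m:\Q]=[(K_{n_0})_v:\R]/[K_{n_0}:\Q]$, which is independent of $m$. Moreover, a growth hypothesis on $|x_{n_0}|_v$ and $|y_{n_0}|_v$ is inherited by every $w\mid v$. Take $n_0=3$ and $v$ with $\sqrt{3}>0$, where $|x_3|_v=x_0$ and $|y_3|_v\approx2.005$. This gives $h(P_m)\to\infty$, and it is what the paper does.

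There is a second problem in the same step: a bare threshold is not propagated by your inequality. If $a_{n-2}$ is large and $a_{n-1}$ is barely above $\log 2$, the inequality gives nothing. You need a comparison invariant, such as the paper's $|y_n|\ge(1+\tfrac{1}{2t})|x_n|$ and $|x_n|\ge t$ with $t=x_0$. This invariant propagates and yields $|y_{n+1}|\ge|y_n|^{3/2}$. Your $2$-adic observation is useless for growth. It is, however, exactly what the paper uses to show that every backward branch exists and stays in the open set over which $f$ is finite.

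\textbf{Zariski density.} Neither of your routes yields a contradiction. Unbounded field degrees are compatible with lying on an invariant curve: backward orbits of $1$ under $z\mapsto z^2$ already have unbounded degree. An upper bound exponential in $n$ does not contradict your lower bound, which is itself exponential.

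What is needed is the following argument. Let $C$ be an $f^r$-invariant component containing infinitely many $P_{nr}$. Let $\nu\colon\widetilde C\to C$ be its normalization, and let $g$ be the induced morphism. Then $\nu^{-1}(\{P_{nr}\})\subset\bigcup_k g^{-k}(\nu^{-1}(P_0))$. The set $\nu^{-1}(P_0)$ is finite and forward invariant, so all these points are $g$-preperiodic. Their infinitude forces $\deg g\ge2$, so $g$ is polarized. Its preperiodic points therefore have bounded height, which contradicts $h(P_n)\to\infty$.
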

We also prove that for any $n \geq 0$, the map $f^n$ is finite over a neighborhood of $P_0$.
(See \cref{sec:unbddpreperpts} for more details.)
Thus, this height unboundedness seems to have nothing to do with
contracted curves. Note that if a self-map contracts a curve to a preperiodic point,
then it is obvious that the set of preperiodic points is height-unbounded.
We also remark that \cref{thmintro:unbddpreper} does not prove that the map is a counterexample to \cref{thm:perptishtbdd} when ``periodic'' is replaced by ``preperiodic'', although we expect it is.

\medskip
\noindent{\bf Future questions.}
\cref{conj:htbddperautoaffine} is originally proposed in \cite{Sil94}
in a slightly weaker form. It asked if the set of periodic points defined over a fixed number field is finite. 
By the Northcott property of height functions, height boundedness
is stronger than this finiteness over a number field.
Our example \cref{thmintro:autoA3unbddper} does not give a counterexample 
to this weaker version. Indeed, it is proven that periodic points of $f$
is determined by their first two coordinates, and the first two coordinates
form a periodic point of an H\'enon map. Thus, the first two coordinates are 
height-bounded and hence their degrees over $\Q$ go to infinity.

Regarding \cref{thm:perptishtbdd} for preperiodic points,
the following is still open.
\begin{question}
    Is there a cohomologically hyperbolic map $f \colon X \dashrightarrow X$
    on a projective variety $X$ over $\QQ$ with the following property?
    For any non-empty Zariski open subset $U \subset X$, the set
    \begin{align}
        \{ P \in X_f(\QQ) \mid \text{$P$ is $f$-preperiodic and $O_f(P) \subset U$  }\} 
    \end{align}
    is not height bounded.
\end{question}
We expect that the map in \cref{thmintro:unbddpreper} provides such an example.
Proving this or constructing other examples remains an interesting problem.
On the other hand, it could still be possible to expect that
there are only finitely many preperiodic points over a fixed number field.
At least, the sequence $\{P_n\}$ constructed in the proof of \cref{thmintro:unbddpreper} does not provide a counterexample, since it cannot be defined over a single number field (see \cref{prop:backorbitdeffldunbdd}).

In \cref{thm:WM} and \cref{thm:perptishtbdd}, we look at 
(pre)periodic points whose entire orbits are contained in the open set $U$.
This condition is essential in our proof, 
but it is not clear whether it reflects a genuine necessity of the statement
or merely a technical assumption imposed by our method. 
It is conceivable that the statement fails without this assumption.

We collect the preceding discussion into the following questions.
\begin{question}\label{que:variantsofhtbddness}
    Let $f \colon X \dashrightarrow X$ be a cohomologically hyperbolic map 
    on a (geometrically integral) projective variety defined over a number field $K$.
    \begin{enumerate}
        \item\label{queperptinopen} Is there a non-empty Zariski open subset $U \subset X$ such that the set
        \begin{align}
            \{P \in X_f(\overline{K}) \cap U(\overline{K})\mid \text{$P$ is $f$-periodic} \}
        \end{align}
        is height bounded?

        \item\label{quepreperptinopen} Suppose $f$ is $\dim X$-cohomologically hyperbolic. Then, is there a non-empty Zariski open subset $U \subset X$ such that the set
        \begin{align}
            \{P \in X_f(\overline{K}) \cap U(\overline{K})\mid \text{$P$ is $f$-preperiodic} \}
        \end{align}
        is height bounded?

        \item\label{quepreperptovernumberfield} Is there a non-empty Zariski open subset $U \subset X$ such that for 
        any finite extension $L/K$, the set
        \begin{align}
            \{P \in X_f(L) \mid \text{$P$ is $f$-preperiodic and $O_f(P) \subset U$} \}
        \end{align}
        or, more strongly, the set
        \begin{align}
            \{P \in X_f(L) \cap U(L) \mid \text{$P$ is $f$-preperiodic} \}
        \end{align}
        is finite?
    \end{enumerate}
\end{question}

\begin{remark}
    \cref{que:variantsofhtbddness} (\cref{quepreperptinopen})
    is not true if the assumption $\dim X$-cohomologically hyperbolic
    is replaced with simply cohomologically hyperbolic.
    Indeed, \cref{thmintro:unbddpreper} gives a counterexample.
\end{remark}

\medskip
\noindent{\bf Organization of the paper.}
In \cref{sec:autA3perptunbddht}, we prove \cref{thmintro:autoA3unbddper},
a counterexample to \cref{conj:htbddperautoaffine}.
In \cref{sec:htbddperpt}, we prove \cref{thm:perptishtbdd} and 
a uniform upper bound of the height of periodic points in terms of the height of the map.
In \cref{sec:unbddpreperpts}, we prove \cref{thmintro:unbddpreper}.

\medbreak
\noindent{\bf Notation.}

In this paper, we work over a number field or a field of characteristic zero unless otherwise stated.
\begin{itemize}
\item A  \emph{variety} over a field $k$ is a separated scheme of finite type over $k$ which is irreducible and reduced;
\item For a self-morphism $f \colon X \longrightarrow X$ of a scheme over $k$ and a 
point $P$ of $X$ (scheme point or $k'$-valued point where $k'$ is a field containing $k$), the  \emph{$f$-orbit of $P$}
is denoted by $O_{f}(P)$, i.e.\ $O_{f}(P) = \{ f^{n}(P) \mid n=0,1,2, \dots\}$.
The same notation is used for dominant rational map $f \colon X \dashrightarrow X$ on a variety $X$ defined over $k$
and $P \in X_{f}(k) = \{ P \in X(k) \mid f^{n}(P) \notin I_{f}, \ n \geq 0\}$
where $I_f$ is the indeterminacy locus of $f$.

\item
We set $M_{\Q}=\{|\ |_{p} \mid \text{$p=\infty$ or a prime number} \}$ with
\begin{align}
 &|a|_{\infty} =
 \begin{cases}
 a \quad \text{if $a\geq0$}\\
 -a \quad \text{if $a<0$}
 \end{cases}
 \\
 & |a|_{p} = p^{-n} \quad \txt{if $p$ is a prime and  $a=p^{n}\frac{k}{l}$ where\\ $k,l$ are non zero integers coprime to $p$.}
\end{align}
For a number field $K$, $M_{K}$ consists of the absolute values of $K$ extending
the absolute values in $M_\Q$.
Namely, $M_K$ consists of the absolute values
\begin{align}
|a|_{v} = |N_{K_{v}/\Q_{p}}(a)|_{p}^{1/[K_v:\Q_p]} ,\ a \in K
\end{align}
where $v$ is a place of $K$ which restricts to $p = \infty$ or a prime number.
Here, for an absolute value $v$ on $K$, $K_v$ denotes the completion of $K$ with respect to $v$.

\item For a Cartier divisor $D$ on a projective variety over $\QQ$,
$h_D$ denotes a logarithmic Weil height function associated with $D$.
We refer \cite{BG06, HS00, La83} for the basic theory of height functions.
\end{itemize}

\begin{acknowledgements}
    The authors thank Max Weinreich for pointing out a simpler method to compute dynamical degrees in \cref{sec:unbddpreperpts}.
    This work was done while the first author was on a research stay at Harvard University. He thanks their hospitality, especially Laura DeMarco, for hosting him.
    The first author is supported by JSPS KAKENHI Grant Numbers JP22K13903 and 23KK0252.
\end{acknowledgements}

\section{An automorphism of \texorpdfstring{$\A^3$}{A3} admitting periodic points with unbounded height}\label{sec:autA3perptunbddht}

In this section, we prove \cref{thmintro:autoA3unbddper}.
As mentioned in the introduction, we also calculate the dynamical degrees of $f$.
We use the following notation. For a self-morphism $f \colon X \longrightarrow X$
of a scheme over a field $k$ and a field extension $K/k$, we write 
\begin{align}
    &\Per_n(f, X(K)) = \{ P \in X(K) \mid f^n(P)=P\} \quad n \in \Z_{\geq 1}\\
    &\Per(f, X(K)) = \{ P \in X(K) \mid \text{$f^n(P)=P$ for some $n \geq 1$}\}.
\end{align}
Let us denote $h \colon \A^3(\QQ) \longrightarrow \R$ the restriction of the naive height function on $\P^3$ to $\A^3$.

\begin{theorem}\label{thm:autoA3unbddper}
    Let 
    \begin{align}
        f \colon \A^3_{\Q} \longrightarrow \A^3_{\Q},
        (x,y,z) \mapsto (y, x+y-y^3, 2z-y).
    \end{align}
    Then, the following statements hold.
    \begin{enumerate}
        \item\label{d1d2off} The morphim $f$ is an automorphism, and we have $d_1(f)=d_2(f)=3$.
        \item\label{nperptfin} for any $n \geq 1$, $\Per_n(f,\A^3(\QQ))$ is finite, thus all periodic points of $f$ are isolated.
        \item\label{unbddsetofperpt} there is a sequence $\{P_i\}_{i \geq 1}$ of points of  $\Per(f, \A^3(\QQ))$ such that
        \begin{align}
            \lim_{i \to \infty} h(P_i) = \infty
        \end{align}
        and $\{P_i \mid i\geq 1\}$ is Zariski dense in $\A^3_{\QQ}$.
    \end{enumerate}
\end{theorem}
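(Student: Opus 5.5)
The plan is to reduce everything to the Hénon map $g\colon \A^2\to\A^2$, $g(x,y)=(y,\,x+y-y^3)$. Then $f(x,y,z)=(g(x,y),\,2z-y)$ is a skew product over $g$ whose fibre map is affine in $z$.

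\emph{Part (1).} The map $g$ is an automorphism with inverse $(u,v)\mapsto(v-u+u^3,\,u)$. Hence $f$ is an automorphism with inverse $(u,v,w)\mapsto\bigl(v-u+u^3,\,u,\,(w+u)/2\bigr)$.

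For the dynamical degrees I will extend $f$ to $\P^3$ and compute the degrees of the iterates. Write $f^n=(g^n,z_n)$ with $z_n=2^nz-\sum_{k<n}2^{n-1-k}y_k$, where $(x_k,y_k)=g^k(x,y)$.
\begin{itemize}
\item We have $\deg g^n=3^n$, because $g$ is a regular (generalized Hénon) map.
\item The $z$-coordinate of $f^n$ has degree at most $3^{n-1}$.
\end{itemize}
So $\deg f^n=3^n$ and $d_1(f)=3$. Since $f$ is birational on a threefold, $d_2(f)=d_1(f^{-1})$. The same computation for $f^{-1}$, whose base part $g^{-1}$ is again Hénon of degree $3$, gives $d_2(f)=3$.

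\emph{Part (2).} The fibre recursion is $z_{k+1}=2z_k-y_k$. Therefore $f^n(x,y,z)=(x,y,z)$ holds if and only if $(x,y)\in\Per_n(g)$ and
\begin{align}
z=\frac{1}{2^n-1}\sum_{k=0}^{n-1}2^{n-1-k}y_k .
\end{align}
So projection gives a bijection $\Per_n(f)\to\Per_n(g)$. The set $\Per_n(g)$ is finite, since a Hénon map has finitely many $n$-periodic points; this follows by Bezout at infinity, or from \cite{Den95}. This gives (2).

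\emph{Part (3).} The key is to make the denominator $2^n-1$ produce a large $3$-adic pole in $z$ at every place above $3$, simultaneously for all Galois conjugates. Take $n=2\cdot3^m$, so that $v_3(2^n-1)=m+1$.

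The map $g$ is defined over $\Z$ and its inverse is too. So every $n$-periodic point has coordinates that are integral at every place $v\mid3$, and it reduces to a periodic point of $\bar g=g\bmod 3$. On $\F_3^2$ the map $\bar g$ is the swap $(x,y)\mapsto(y,x)$, since $y^3=y$ there. Over $\F_{3^r}$ it is Frobenius-twisted and has explicit finite orbits.

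I would then compute the numerator $S=\sum2^{n-1-k}y_k$ modulo $v$ for each possible reduced orbit. If necessary I will adjust the choice of $n$, for instance taking $n=2\cdot3^m\cdot r$ with suitable $r$, or restricting to periodic points with a prescribed reduction type whose existence is guaranteed by Hensel's lemma. The goal is $v(S)\le c$ for a constant $c$ independent of $m$. Then $|z|_v\ge 3^{m+1-c}$ for every $v\mid3$ and every conjugate, hence $h(z)\ge(m+1-c)\log3\to\infty$, so $h(P)\to\infty$.

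This $3$-adic bookkeeping is the main obstacle. The naive choice $n=2\cdot3^m$ together with the swap orbit makes $S$ a multiple of $n/2$, so it also picks up factors of $3$ and can cancel the denominator. I will first try orbits whose reduction has odd period, or has period divisible by $3^m$ through the Frobenius twist, where the alternating-sign cancellation fails.

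\emph{Zariski density.} Let $Z$ be the Zariski closure of the chosen family. It is $f$-invariant, and it projects dominantly onto $\A^2$ because periodic points of $g$ of growing period are Zariski dense. If $Z$ were a proper surface, it would be an $f$-invariant multisection $z=\phi(x,y)$ of the fibration over $\A^2$. An invariant section must satisfy $\phi\circ g=2\phi-y$, and I would rule this out by comparing degree growth under $g$: the left-hand side has degree growing like $3^n$, while the right-hand side stays linear. Since all but finitely many points of the family have large height, discarding finitely many points keeps the family dense. This yields the sequence $\{P_i\}$.
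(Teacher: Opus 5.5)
\textbf{Main gap: the height bound in Part (3) is never proved.} Parts (1) and (2) are fine and follow the paper. In Part (3), however, you only state the goal $v(S)\le c$ and list things to try. Your choice $n=2\cdot 3^m$ in fact already works, but three ingredients are missing.

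First, over $\F_{3^k}$ the reduction $\bar g$ is $\F_3$-linear: $\bar g=L=\bigl(\begin{smallmatrix}0&1\\1&-D\end{smallmatrix}\bigr)$ with $D=\Fr_3-\id$. From $L^2=I-DL$ and characteristic $3$ one gets $L^n=I-D^{n/2}L^{n/2}$ with $D^{n/2}=\Fr_3^{n/2}-\id$. Hence $n$-periodic points reduce into $\F_{3^{n/2}}^2$, and every point of $\F_{3^{n/2}}^2$ is $\bar g$-periodic with period dividing $n$.

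Second, the Jacobian of $\bar g$ is the constant matrix $\bigl(\begin{smallmatrix}0&1\\1&1\end{smallmatrix}\bigr)$, of order $8$ in $\mathrm{GL}_2(\F_3)$. So for $8\nmid n$ the scheme $\PerSch_n$ is \'etale over $(3)$. This gives injectivity of reduction and Hensel lifting, hence exactly $3^n$ periodic points, $K_n/\Q$ unramified at $3$ with all residue fields $\F_{3^{n/2}}$, and reduction a bijection onto $\F_{3^{n/2}}^2$. Your appeal to Hensel's lemma needs precisely this nondegeneracy.

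Third, from $\sum_{i<n}(-t)^i=(1-t)(1-t^2)^{n/2-1}$ in $\F_3[t]$ one gets $\bar S=\pm\Tr_{\F_{3^{n/2}}/\F_3}(\bar x_0-\bar y_0)$. This is what explains your observation that points reducing into $\F_3^2$ give $\bar S=0$.

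Your target ``$v(S)\le c$ at every $v\mid 3$ for every conjugate'' is also stronger than needed and unjustified. Indeed $\bar S$ vanishes at $\sigma(\p_0)$ iff the trace for $\sigma^{-1}(P)$ vanishes at $\p_0$, and there is no reason a whole Galois orbit avoids the trace-zero locus. Since the height averages over places, all with equal local degree, a fixed proportion of places suffices. The trace-zero set is a third of $\F_{3^{n/2}}^2$, so some Galois orbit has at most a third of its reductions modulo a fixed $\p_0$ there. A point of that orbit then has $|z|_v=3^{m+1}$ at at least $2/3$ of the places above $3$, giving $h\ge\frac23(m+1)\log 3$. Because $\bar S$ only changes sign along the $g$-orbit, the same bound holds on the whole $f$-orbit, which you need if your family is to be $f$-invariant.

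\textbf{Zariski density.} Your argument here is incomplete. A component of $Z$ dominating $\A^2$ is only $f^k$-invariant and is in general a multisection. Even a section $z=\phi(x,y)$ has $\phi$ merely rational. Moreover, degrees can drop under composition with $g$ (e.g.\ $\phi=x^3+y$ gives $\phi\circ g=x+y$), so the degree comparison is not automatic. The paper argues with heights instead. On an open subset of such a component where the projection is quasi-finite, the chosen points have unbounded height, while their images are $g$-periodic and hence of bounded height by Kawaguchi; this contradicts \cref{lem:htbddqfin}. Density of the projection itself uses that H\'enon maps have no periodic curves.

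\textbf{Integrality.} Integrality at $v\mid 3$ does not follow from $g^{\pm1}$ being defined over $\Z$: the swap $(x,y)\mapsto(y,x)$ fixes $(1/3,1/3)$. You need the ultrametric argument with $y_{i+1}=y_{i-1}+y_i-y_i^3$ at an index maximizing $|y_i|_v$.
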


We first note the following fibration structure:
\begin{equation}
    \begin{tikzcd}
        \A^3_{\Q} \arrow[r, "f"] \arrow[d,"\pr",swap] & \A^3_{\Q} \arrow[d,"\pr"] \\
        \A^2_{\Q} \arrow[r, "g", swap] & \A^2_{\Q}
    \end{tikzcd}
\end{equation}
where $\pr$ is the projection to $(x,y)$-coordinates and 
$g$ is the morphism defined by $g(x,y)=(y,x+y-y^3)$.
Note that $g$ is an H\'enon map.
We use this notation throughout this section.
We first observe the following.
\begin{lemma}\label{lem:corrofperpts}
    Let 
    \begin{align}
        \f \colon \Per(g,\A^2(\QQ)) \longrightarrow \Per(f, \A^3(\QQ)),
        (x_0,y_0) \mapsto (x_0,y_0,z_0)
    \end{align}
    where $z_0$ is defined as follows. Let $n$ be the exact period of $(x_0,y_0)$
    under $g$, and write $g^i(x_0,y_0) = (x_i,y_i)$ for $i=0,\dots, n-1$.
    Then we set
    \begin{align}
        z_0 = \frac{1}{2^n - 1} \sum_{i=0}^{n-1} 2^{n-1-i}y_i. \label{expressionofz0}
    \end{align}
    Then $\f$ is a well-defined bijection. 
    Moreover if $P \in \Per(g,\A^2(\QQ))$ has exact period $n$ under $g$, then so does $\f(P)$ under $f$.
\end{lemma}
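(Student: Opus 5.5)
The plan is to understand the $z$-coordinate dynamics explicitly over a fixed $g$-orbit, since $f$ is a skew product over $g$ whose fiber map is affine: $z \mapsto 2z - y$. Iterating, write $f^i(x_0,y_0,z_0) = (x_i,y_i,z_i)$ with $(x_i,y_i)=g^i(x_0,y_0)$. An induction on $i$ should give
\begin{align}
    z_i = 2^i z_0 - \sum_{j=0}^{i-1} 2^{i-1-j} y_j .
\end{align}
The induction step is just $z_{i+1} = 2z_i - y_i$. Hence, for any $m \geq 1$, the condition $f^m(x_0,y_0,z_0) = (x_0,y_0,z_0)$ is equivalent to two conditions: $g^m(x_0,y_0)=(x_0,y_0)$, and
\begin{align}
    (2^m - 1) z_0 = \sum_{j=0}^{m-1} 2^{m-1-j} y_j .
\end{align}
Since $2^m - 1 \neq 0$, the second condition determines $z_0$ uniquely, as a linear expression in $y_0,\dots,y_{m-1}$.

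With this formula the well-definedness and bijectivity follow. Let $(x_0,y_0)$ have exact $g$-period $n$ and define $z_0$ by \cref{expressionofz0}. Then the case $m=n$ of the displayed equivalence shows $f^n(\f(P))=\f(P)$, so $\f$ lands in $\Per(f,\A^3(\QQ))$. Injectivity is immediate from the first two coordinates.

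For surjectivity, take $Q=(x_0,y_0,z_0)$ with $f^m(Q)=Q$. Then $P=\pr(Q)$ satisfies $g^m(P)=P$, so $P$ is $g$-periodic with exact period $n$ dividing $m$. The point $\f(P)$ is also fixed by $f^m$, because $f^n$ fixes it. Both $Q$ and $\f(P)$ lie over $P$ and are fixed by $f^m$, so by the uniqueness of $z_0$ (the case $m$ of the equivalence) they coincide. This yields $Q=\f(P)$.

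For the exact period statement, let $k$ be the exact $f$-period of $\f(P)$. Since $f^n(\f(P))=\f(P)$, we get $k \mid n$. Projecting by $\pr$ gives $g^k(P)=P$, so $n \mid k$. Therefore $k=n$.

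The only mildly delicate point is the surjectivity step. There one must observe that the $z$-coordinate of a point fixed by $f^m$ is forced, even when $m$ is a proper multiple of the exact period. I avoid re-verifying the geometric-sum identity for general $m$ by arguing through uniqueness, as above, rather than by direct computation.
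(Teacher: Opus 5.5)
Your proposal is correct and follows the paper's proof. It uses the same closed formula for the $z$-coordinate of $f^i$, the same well-definedness argument via the case $m=n$, and the same injectivity argument from the first two coordinates. The difference is in surjectivity: the paper explicitly collapses the $N$-term sum to the $n$-term one using periodicity of the $y_j$ and a geometric series, whereas you observe that $\f(P)$ is also fixed by $f^m$ and that an $f^m$-fixed point over $P$ has a uniquely determined $z$-coordinate because $2^m-1\neq 0$. This is a cleaner route to the same conclusion, and you also supply the divisibility argument for the exact period that the paper leaves implicit.
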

\begin{proof}
    Let $(x_0,y_0) \in \Per_n(g, \A^2(\QQ))$.
    Write $g^i(x_0,y_0) = (x_i,y_i)$ for $i \geq 0$.
    Note that $(x_i,y_i) = (x_{i+n},y_{i+n})$.
    Then for any $z_0 \in \QQ$, we have 
    \begin{align}
        f^i(x_0,y_0,z_0) = (x_i,y_i, 2^{i}z_0 - \sum_{j=0}^{i-1}2^{i-1-j}y_j) \label{iterateoffcoordinate}
    \end{align}
    for $i \geq 0$.
    Thus, if we set $z_0$ as in \cref{expressionofz0}, then $(x_0,y_0,z_0)$
    is an $n$-periodic point of $f$.
    This fact guarantees the well-definedness of $\f$.
    Also, if $(x_0,y_0)$ has exact period $n$, then so does $(x_0,y_0,z_0)$.
    As the injectivity of $\f$ is clear, we only need to show the surjectivity of $\f$.
    Let $Q = (x_0,y_0,w_0) \in \Per(f, \A^3(\QQ))$.
    Let $N \geq 1$ be the exact period of $Q$ under $f$
    and $n$ be that of $(x_0,y_0)$ under $g$.
    Then by \cref{iterateoffcoordinate}, we have
    \begin{align}
        (x_0,y_0,w_0) = f^N(x_0,y_0,w_0)
        =(x_N,y_N, 2^{N}w_0 - \sum_{j=0}^{N-1}2^{N-1-j}y_j).
    \end{align}
    Noticing $y_i = y_{i+n}$ and $n \mid N$, we have
    \begin{align}
        w_0 &= \frac{1}{2^N - 1} \sum_{j=0}^{N-1} 2^{N-1-j} y_j
        =\frac{1}{2^N - 1}  \sum_{k=1}^{N/n} \sum_{(k-1)n \leq j <kn} 2^{N-1-j} y_j\\
        &=
        \frac{1}{2^N - 1}  \sum_{k=1}^{N/n} \sum_{(k-1)n \leq j <kn} 2^{N-1-(j-(k-1)n)}2^{-(k-1)n} y_{j-(k-1)n}\\
        &=\frac{1}{2^N - 1}  \sum_{k=1}^{N/n}  2^{-(k-1)n} 
        \sum_{j=0}^{n-1} 2^{N-1-j}y_j\\
        &=\frac{1}{2^N - 1} \frac{1-2^{-N}}{1-2^{-n}} \sum_{j=0}^{n-1} 2^{N-1-j}y_j\\
        &=\frac{1}{2^n - 1} \sum_{j=0}^{n-1} 2^{n-1-j}y_j.
    \end{align}
    Thus $Q = \f(x_0,y_0)$ and we are done.
\end{proof}

\begin{proof}[Proof of \cref{thm:autoA3unbddper} (\cref{d1d2off})(\cref{nperptfin})]
    Note that we have $d_1(f) = \lim_{n \to \infty}\deg(f^n)^{1/n}$,
    where $\deg f$ denotes the maximum of the total degrees of coordinate functions of $f$.
    (The same is true for $g$.)
    By direct calculation or the fact that $g$ is a regular affine automorphism 
    (i.e.\ indeterminacy loci of $g$ and $g^{-1}$ as rational maps on $\P^2$ do not intersect), we have $d_1(g) = 3$ (see for example \cite[Theorem 7.10 (a)]{silADS}).
    Thus $d_1(f) \geq d_1(g) = 3$.
    We clearly have $d_1(f) \leq 3$, and thus $d_1(f) = 3$.
    Since $f^{-1}(x,y,z) = (y-x+x^3, x, (z+x)/2)$, 
    same argument implies $d_1(f^{-1}) = 3$.
    As $d_2(f) = d_1(f^{-1})$, we are done.

    By \cref{lem:corrofperpts}, there is a bijection 
    $\Per_n(g, \A^2(\QQ)) \longrightarrow \Per_n(f,\A^3(\QQ))$.
    Since $g$ is a regular affine automorphism with $d_1(g) > 1$, 
    the set $\Per_n(g, \A^2(\QQ))$ is finite.
    (See \cite[Theorem 7.10(c)]{silADS}. It also follows from the fact
    that the set of periodic points of $g$ is height bounded \cite[Corollary B]{Ka06}, and the set of $n$-periodic points is a Zariski closed subset.)
\end{proof}

Our strategy to prove \cref{thm:autoA3unbddper} (\cref{unbddsetofperpt})
is as follows.
For a given $n$-periodic point $(x_0,y_0)$ of $g$, 
we set $z_0$ as \cref{expressionofz0}.
Then $(x_0,y_0,z_0)$ is an $n$-periodic point of $f$.
Consider a prime number $p$ such that a high power of $p$ divides $2^n-1$.
In this situation, if we can show that 
\begin{align}
    \left| \sum_{i=0}^{n-1} 2^{n-1-i}y_i\right|_v
\end{align}
is not small (e.g.\ $\geq 1$) for most of absolute values $v$
lying over $p$, then the height of $(x_0,y_0,z_0)$ would be large.
We will justify this argument for $p = 3$ and specific $n$'s.

We collect several properties of periodic points of $g$.
First, we see that the coordinates are algebraic integers.

\begin{lemma}\label{lem:integralityofperpt}
    Let $ (x_0,y_0) \in \A^2(\QQ)$ be a periodic point of $g$.
    Then $x_0,y_0$ are integral over $\Z$.
\end{lemma}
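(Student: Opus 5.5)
We will show that for every non-archimedean place the coordinates of a periodic point have absolute value at most $1$. Let $(x_0,y_0)$ be an $n$-periodic point of $g$, and let $K$ be a number field containing $x_0,y_0$. It suffices to prove $|x_0|_v \leq 1$ and $|y_0|_v \leq 1$ for every non-archimedean $v \in M_K$; then $x_0,y_0$ lie in every valuation ring of $K$, hence in $\O_K$, and so are integral over $\Z$. Write $(x_i,y_i) = g^i(x_0,y_0)$. The recursion is $x_{i+1} = y_i$ and $y_{i+1} = x_i + y_i - y_i^3$, and the sequence $(x_i,y_i)$ is periodic with period $n$.

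Fix a non-archimedean $v$ and set $M = \max_i \max\{|x_i|_v, |y_i|_v\}$. This maximum is taken over finitely many indices because of periodicity. Suppose, for contradiction, that $M > 1$. Since $x_{i+1} = y_i$, every $x$-value is a $y$-value, so $M = \max_i |y_i|_v$. Choose $j$ with $|y_j|_v = M$. Then $|y_j^3|_v = M^3 > M \geq \max\{|x_j|_v, |y_j|_v\}$. By the strict ultrametric inequality,
\begin{align}
|y_{j+1}|_v = |x_j + y_j - y_j^3|_v = M^3 > M.
\end{align}
This contradicts the maximality of $M$. Hence $M \leq 1$, and the claim follows.

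The same conclusion can be phrased as an escape argument. Once $|y_i|_v > \max\{1, |x_i|_v\}$, induction gives $|y_{i+1}|_v = |y_i|_v^3 > \max\{1, |x_{i+1}|_v\}$, so the orbit escapes to infinity and cannot be periodic. The only step that needs care is choosing the index maximizing $|y_i|_v$, which is what guarantees the dominance hypothesis needed to start this induction.
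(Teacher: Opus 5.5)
Your proof is correct and is essentially the paper's argument: the paper also picks an index $i_0$ maximizing $|y_i|_v$ over the periodic orbit, using $x_i = y_{i-1}$ (i.e.\ the recursion $y_{i+1} = y_{i-1} + y_i - y_i^3$), and applies the strict ultrametric inequality to get $|y_{i_0+1}|_v = |y_{i_0}|_v^3 > |y_{i_0}|_v$, a contradiction. One small point about your closing remark: the maximizing index only gives $|y_j|_v \geq |x_j|_v$, so the strict dominance hypothesis of the escape induction first holds at $j+1$; your main argument does not rely on that remark.
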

\begin{proof}
    Let $K \subset \QQ$ be a number field such that $x_0,y_0 \in K$.
    It is enough to show that for any non-archimedean 
    $v \in M_K$, we have $|x_0|_v, |y_0|_v \leq 1$.
    Write $g^i(x_0,y_0) = (x_i,y_i)$ for $i \in \Z$.
    By
    \begin{align}
        &(x_i,y_i) = (y_{i-1}, x_{i-1} + y_{i-1} - y_{i-1}^3)\\
        &(x_{i+1},y_{i+1}) = (y_{i}, x_{i} + y_{i} - y_{i}^3),
    \end{align}
    we have
    \begin{align}
        y_{i+1} = y_{i-1} + y_i - y_i^3
    \end{align}
    for all $i \in \Z$.
    Let $i_0 \in \Z$ be such that $|y_{i_0}|_v = \max\{|y_i|_v \mid i\in \Z\}$.
    (Such $i_0$ exists because $(x_0,y_0)$ is $g$-periodic.)
    If $|y_{i_0}|_v > 1$, we have
    \begin{align}
        |y_{i_0 + 1}|_v = |y_{i_0-1} + y_{i_0} - y_{i_0}^3  |_v = |y_{i_0}^3|_v > |y_{i_0}|_v
    \end{align}
    and this is a contradiction.
    Thus $|y_i|_v \leq 1$ for all $i \in \Z$.
    In particular, $|y_0|_v \leq 1$ and $|x_0|_v = |y_{-1}|_v \leq 1$, and we are done.
\end{proof} 

To state some algebraic-geometric properties, we define the schemes corresponding to the set of periodic points.
\begin{definition}
    For a positive integer $l$, we write 
    $g^l(x,y) = (X_l(x,y), Y_l(x,y))$. Note that $X_l(x,y), Y_l(x,y)$
    are polynomials in $x,y$ with coefficients in $\Z$.
    We set 
    \begin{align}
        \PerSch_l \coloneqq \Spec \Z[x,y]/(X_l - x, Y_l - y)  
    \end{align}
    and let $\pi_l \colon \PerSch_l \longrightarrow \Spec \Z$
    be the structure morphism.
    Note that for any ring $R$, the set of $R$-valued points
    $\PerSch_l(R)$ as a subset of $\A^2(R)$ is exactly the set of
    $l$-periodic points of $g$ 
    ($g$ acts on $\A^2(R)$ since it is defined over $\Z$).
\end{definition}

\begin{lemma}\label{lem:perschetaleover3}
    Let $l$ be a positive integer such that $l \not\equiv 0 \ \mod 8$.
    Then for any point $\xi \in \PerSch_l$ such that $\pi_l(\xi) = (3)$,
    $\pi_l$ is \'etale at $\xi$. 
\end{lemma}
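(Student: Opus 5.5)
The plan is to apply the Jacobian criterion for étaleness to the standard presentation of $\PerSch_l$. By definition,
\begin{align}
    \PerSch_l = \Spec \Z[x,y]/(F_l, G_l), \qquad F_l = X_l - x,\quad G_l = Y_l - y.
\end{align}
This is two equations in two variables over $\Z$, so it is a standard presentation in the sense of the Jacobian criterion. Hence $\pi_l$ is étale at $\xi$ exactly when the Jacobian determinant
\begin{align}
    J_l \coloneqq \det\begin{pmatrix} \partial F_l/\partial x & \partial F_l/\partial y \\ \partial G_l/\partial x & \partial G_l/\partial y \end{pmatrix} = \det\bigl(Dg^l - I\bigr)
\end{align}
does not vanish at $\xi$. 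Since $\pi_l(\xi) = (3)$, the residue field of $\xi$ has characteristic $3$. It therefore suffices to show that $J_l \bmod 3$, as a polynomial in $\F_3[x,y]$, has no zero at any $\overline{\F}_3$-point of $\PerSch_l$. In fact I will show that $J_l \bmod 3$ is a nonzero constant.

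The key observation is that modulo $3$ the differential of $g$ is constant. Writing $g(x,y) = (y, x+y-y^3)$, we have
\begin{align}
    Dg = \begin{pmatrix} 0 & 1 \\ 1 & 1-3y^2 \end{pmatrix} \equiv A \coloneqq \begin{pmatrix} 0 & 1 \\ 1 & 1 \end{pmatrix} \pmod 3.
\end{align}
By the chain rule, $Dg^l(P) = Dg(g^{l-1}P)\cdots Dg(P)$ as polynomial matrices over $\Z$. Reducing modulo $3$ gives $Dg^l \equiv A^l$ in $M_2(\F_3[x,y])$. Consequently
\begin{align}
    J_l \equiv \det(A^l - I) \pmod 3,
\end{align}
which is a constant in $\F_3$.

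It remains to check that $\det(A^l - I) \neq 0$ in $\F_3$ whenever $8 \nmid l$. The characteristic polynomial of $A$ is $t^2 - t - 1$. Its discriminant is $5 \equiv 2$, which is a non-square mod $3$, so the polynomial is irreducible over $\F_3$. Its roots $\lambda, \lambda^3$ therefore lie in $\F_9$ and are distinct, and
\begin{align}
    \det(A^l - I) = (\lambda^l - 1)(\lambda^{3l} - 1).
\end{align}
To find the order of $\lambda$, I compute small powers of $A$ mod $3$. This is the Fibonacci recursion mod $3$, whose Pisano period is $8$, and one checks $A^4 = -I$. Since $A^4 = -I$ and $A^8 = I$, the order of $\lambda$ in $\F_9^\times$ is exactly $8$, and the same holds for $\lambda^3$. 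Hence $\lambda^l = 1$ iff $8 \mid l$, iff $\lambda^{3l} = 1$, so $\det(A^l - I) \neq 0$ when $8 \nmid l$.

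The only step needing care is the first: justifying that $\pi_l$ is étale at $\xi$ exactly when $J_l(\xi) \neq 0$. This holds because $\Z[x,y]/(F_l, G_l)$ is a standard étale presentation wherever the Jacobian is invertible. The rest is the routine computation above.
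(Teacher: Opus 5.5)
Your proof is correct and follows the same route as the paper: the Jacobian criterion for the presentation $\Z[x,y]/(X_l-x,\,Y_l-y)$, the observation that $Dg \equiv \begin{pmatrix}0&1\\1&1\end{pmatrix} \pmod 3$, so that the Jacobian reduces to the nonzero-or-zero constant $\det(A^l-I)\in\F_3$, and the fact that $A$ has order $8$ modulo $3$. The only difference is cosmetic: you check nonvanishing via the eigenvalues in $\F_9$ and $A^4=-I$, whereas the paper simply tabulates $A^l-I$ for $l=1,\dots,8$.
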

\begin{proof}
    Let $\p \subset \Z[x,y]/(X_l - x, Y_l - y)=:R$ be the prime ideal
    corresponding to $\xi$.
    Note that $\p \cap \Z = (3)$.
    Let 
    \begin{align}
        A = 
        \begin{pmatrix}
            \frac{\partial X_l}{\partial x} - 1 & \frac{\partial X_l}{\partial y}\\
            \frac{Y_l}{\partial x} & \frac{\partial Y_l}{\partial y} - 1
        \end{pmatrix}.
    \end{align}
    By the Jacobian criterion (see ,for example, \cite[Corollary 2.5.9]{Fuetale}), it is enough to show that
    the image of $\det A$ in $R$ is not contained in $\p$.
    To this end, we show that the image of $\det A$ in $\F_3[x,y]$
    is invertible.
    First note that the Jacobian matrix $Jg$ of $g$ is
    \begin{align}
        \begin{pmatrix}
            0 & 1\\
            1 & 1 - 3y^2
        \end{pmatrix}
        \equiv
        \begin{pmatrix}
            0 & 1\\
            1 & 1
        \end{pmatrix}
        \ \mod 3.
    \end{align}
    Thus we have
    \begin{align}
        A \equiv 
        {
        \begin{pmatrix}
            0 & 1\\
            1 & 1
        \end{pmatrix}
        }^l 
        -
        \begin{pmatrix}
            1 & 0\\
            0 & 1
        \end{pmatrix}
        \ \mod 3.
    \end{align}
    We can check that the matrix
    $
    \begin{pmatrix}
        0 & 1\\
        1 & 1
    \end{pmatrix}
    $
    has order $8$ modulo $3$, 
    and the image of $\det A$ in $\F_3[x,y]$ are invertible unless $8 \mid l$
    (see the following table).

\[
\setlength{\arraycolsep}{3pt} 
\begin{array}{c|cccccccc}
l 
& 1 & 2 & 3 & 4 & 5 & 6 & 7 & 8 \\ \hline 
{\begin{pmatrix}0 & 1\\ 1 & 1\end{pmatrix}}^l \rule[-14pt]{0pt}{35pt}
& \begin{pmatrix}0 & 1\\ 1 & 1\end{pmatrix}
& \begin{pmatrix}1 & 1\\ 1 & 2\end{pmatrix}
& \begin{pmatrix}1 & 2\\ 2 &0 \end{pmatrix}
& \begin{pmatrix}2 & 0\\ 0 & 2\end{pmatrix}
& \begin{pmatrix}0 & 2\\ 2 & 2\end{pmatrix}
& \begin{pmatrix}2 & 2\\ 2 & 1\end{pmatrix}
& \begin{pmatrix}2 & 1\\ 1 & 0\end{pmatrix}
& \begin{pmatrix}1& 0\\ 0 & 1\end{pmatrix}
\\ \hline
A \mod 3 \rule[-14pt]{0pt}{35pt}
& \begin{pmatrix}-1 & 1\\ 1 & 0\end{pmatrix}
& \begin{pmatrix}0 & 1\\ 1 & 1\end{pmatrix}
& \begin{pmatrix}0 & 2\\ 2 & -1\end{pmatrix}
& \begin{pmatrix}1 & 0\\ 0 & 1\end{pmatrix}
& \begin{pmatrix}-1 & 2\\ 2 & 1\end{pmatrix}
& \begin{pmatrix}1 & 2\\ 2 & 0\end{pmatrix}
& \begin{pmatrix}1 & 1\\ 1 & -1\end{pmatrix}
& 0
\end{array}
\]
\end{proof}

\begin{corollary}\label{cor:redisinjective}
    Let $l$ be a positive integer such that $l \not\equiv 0 \ \mod 8$.
    Let $K \subset \QQ$ be a number field with the ring of integers $\O_K$.
    Let $\p \subset \O_K$ be a prime ideal lying over $3$.
    Then the map induced by the reduction 
    \begin{align}
        \PerSch_l(\O_K) \longrightarrow \A^2(\O_K/\p) 
    \end{align}
    is injective.
\end{corollary}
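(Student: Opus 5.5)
Fix the localization $\O_{K,\p}$, a discrete valuation ring with residue field $k = \O_K/\p$ of characteristic $3$. The approach is the standard one: for a scheme étale over the base, sections over a henselian (or at least local) base are determined by their reductions. Concretely, the plan is to show that two $\O_K$-points $P, Q \in \PerSch_l(\O_K)$ with the same reduction mod $\p$ must coincide, using that $\pi_l$ is unramified at the common image point of the closed fibre, as guaranteed by \cref{lem:perschetaleover3}.

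First, consider the base changes. The points $P$ and $Q$ give sections $s_P, s_Q \colon \Spec \O_{K,\p} \to \PerSch_l \times_{\Z} \Spec \O_{K,\p} =: \mathcal{P}$. Their reductions agree, so $s_P$ and $s_Q$ send the closed point to the same point $\eta$ of $\mathcal{P}$. The point $\eta$ lies over $(3) \in \Spec \Z$, so by \cref{lem:perschetaleover3} the morphism $\pi_l$ is étale at the image of $\eta$ in $\PerSch_l$. Étaleness is stable under base change, so $\mathcal{P} \to \Spec \O_{K,\p}$ is étale, and in particular unramified, at $\eta$. Let $W \subset \mathcal{P}$ be an open neighborhood of $\eta$ that is étale over $\Spec \O_{K,\p}$. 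Since $\Spec \O_{K,\p}$ is local and every open set containing its closed point is the whole space, both sections factor through $W$.

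Next, apply the standard fact that for an unramified, separated morphism $W \to S$, any section is an open and closed immersion; equivalently, the diagonal of $W$ over $S$ is open. The locus where two sections agree is therefore open and closed in $S = \Spec \O_{K,\p}$. This locus contains the closed point, because the sections agree there and the residue field extension at $\eta$ is trivial, both being $k$-points. Since $S$ is connected, the locus is all of $S$, so $s_P = s_Q$. Hence $P$ and $Q$ agree as points of $\A^2(\O_{K,\p}) \subset \A^2(K)$, and therefore as points of $\A^2(\O_K)$.

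An elementary alternative avoids scheme language. Write $P = (a,b)$ and $Q = (a',b')$, and set $F = (X_l - x, Y_l - y)$. Taylor expansion gives $0 = F(Q) - F(P) = A(P)\,(Q-P) + O(|Q-P|_\p^2)$. From the proof of \cref{lem:perschetaleover3}, $\det A(P)$ is a $\p$-adic unit. If $Q \neq P$, comparing $\p$-adic valuations of both sides yields a contradiction. The only delicate point in the whole argument is confirming that the étaleness of \cref{lem:perschetaleover3} is exactly what forces the agreement locus to be open. The Jacobian computation, in which the determinant is a unit at $\p$, makes this transparent, so I expect no real obstacle.
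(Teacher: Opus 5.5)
Your proposal is correct and follows essentially the paper's argument. The paper also regards $P,Q$ as sections of $(\PerSch_l)_{\O_K}\to\Spec\O_K$ that meet at a point of the fibre over $\p$ where the map is \'etale by \cref{lem:perschetaleover3}, uses that a section of an \'etale morphism is an open immersion near that point to get that $P$ and $Q$ agree near $\p$, and concludes $P=Q$. Your localization to $\O_{K,\p}$ with the open-and-closed equalizer, and your Hensel-type Jacobian alternative, are repackagings of that same \'etaleness input.
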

\begin{proof}
    Let $P,Q \in \PerSch_l(\O_K)$ be such that $\overline{P} = \overline{Q}$,
    where $\overline{\ \cdot\  }$ stands for the reduction.
    By \cref{lem:perschetaleover3}, 
    the base change $\pi_{l,\O_K} \colon (\PerSch_l)_{\O_K} \longrightarrow \Spec \O_K$ of $\pi_l$ is \'etale at any points over $\p$.
    If we regard $P,Q$ as sections of $\pi_{l,\O_K}$,
    we have $P(\p)=Q(\p)$ and $\pi_{l,\O_K}$ is \'etale at this point.
    Then, $P$ is an open immersion on a neighborhood of $\p$
    (see for example \cite[Proposition 2.3.1(iv), Proposition 2.3.9]{Fuetale}),
    and this implies $\pi_{l,\O_K}$ is open immersion around the point $P(\p)=Q(\p)$.
    Thus $P$ and $Q$ agrees around $\p$ and hence $P = Q$. 

\end{proof}

\begin{lemma}\label{lem:redcoordextdegm}
    Let $r$ be a non-negative integer, and set $n \coloneqq 2m$ and $m \coloneqq 3^r$.
    Let $P = (x_0,y_0) \in \A^2(\QQ)$ be an $g$-periodic point of period $n$.
    Let $K \subset \QQ$ be any number field containing $x_0,y_0$.
    By \cref{lem:integralityofperpt}, $x_0, y_0 \in \O_K$ (where $\O_K$ is the ring of integers of $K$). Let $\p \subset \O_K$ be a prime ideal lying over $3$.
    Then we have the following diagram
    \begin{equation}
        \begin{tikzcd}[row sep=1.2ex]
            g \acts \A^2(\O_K) \arrow[r] & \A^2(\O_K/\p) \racts \overline{g}\\
            P \arrow[r,|->] \arrow[u,phantom, "\in", sloped]& \overline{P}\arrow[u,phantom, "\in", sloped]
        \end{tikzcd}
    \end{equation}
    where $\overline{g}$, $\overline{P}$ are reduction of $g$, $P$
    modulo $\p$ respectively.
    Let $\overline{x_0}, \overline{y_0}$ be the image of $x_0,y_0$ in $\O_K/\p$.
    Then we have $[\F_3(\overline{x_0}, \overline{y_0}) : \F_3] \leq m$.
    Here $\F_3(\overline{x_0}, \overline{y_0}) $ is the subfield of $\O_K/\p$
    generated by $\overline{x_0}, \overline{y_0}$ over $\F_3$.
\end{lemma}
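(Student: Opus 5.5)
The plan is to exploit the fact, already used in the proof of \cref{lem:perschetaleover3}, that $g$ becomes ``linear'' modulo $3$. Here the right notion of linearity is over the Frobenius twisted polynomial ring. Let $k=\overline{\F_3}$, which contains $\O_K/\p$. Since $g$ is defined over $\Z$, reduction commutes with $g$, so $\overline{g}^n(\overline{P})=\overline{P}$. On $k$ the reduced map is
\[
\overline{g}(x,y)=(y,\,x+y-y^3).
\]
Write $F(a)=a^3$ for the Frobenius. Then $F$ is additive and commutes with multiplication by elements of $\F_3$. Hence $k$ is a module over the commutative polynomial ring $R=\F_3[T]$, with $T$ acting as $F$, and $\overline{g}$ is the $R$-linear map $k^2\to k^2$ given by the matrix
\[
M=\begin{pmatrix}0&1\\ 1&1-T\end{pmatrix}\in M_2(R).
\]
Its determinant is $-1$, so $M$ is invertible in $M_2(R)$.

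The key computation is a closed form for $M^n$ when $n=2\cdot 3^r$. By Cayley--Hamilton over the commutative ring $R$ (or by direct multiplication),
\[
M^2=(1-T)M+I.
\]
The matrices $I$ and $(1-T)M$ commute and we are in characteristic $3$. Raising to the $3^r=m$-th power therefore gives
\[
M^{n}=\bigl(I+(1-T)M\bigr)^{m}=I+(1-T)^{m}M^{m}=I+(1-T^{m})M^{m},
\]
using that $(1-T)^{3^r}=1-T^{3^r}$ in $\F_3[T]$. Consequently $M^n-I=(1-T^m)M^m$.

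Now let $v=(\overline{x_0},\overline{y_0})\in k^2$. Since $(M^n-I)v=0$, we get $(1-T^m)M^m v=0$. Put $w=M^m v$. Then each coordinate $a$ of $w$ satisfies $a^{3^m}=a$, that is, $w\in\F_{3^m}^2$. The entries of $M^{-m}$ lie in $R$, and $R$ preserves $\F_{3^m}$ because $F(\F_{3^m})\subset\F_{3^m}$. Hence
\[
v=M^{-m}w\in\F_{3^m}^2,
\]
which gives $\F_3(\overline{x_0},\overline{y_0})\subset\F_{3^m}$ and so the degree bound $[\F_3(\overline{x_0},\overline{y_0}):\F_3]\le m$.

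The main point requiring care is to justify the $R$-module formalism, in particular that the action of $\overline{g}$ on $k^2$ really is multiplication by $M$. This holds because $\overline{g}$ is built from additions and the additive map $y\mapsto y-y^3=(1-F)(y)$. One must also check that the binomial expansion is legitimate, which needs only commutativity of $I$ and $(1-T)M$ inside $M_2(R)$. Once these are in place, the rest is the short computation above.
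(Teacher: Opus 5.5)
Your proof is correct and is essentially the paper's argument: your $M$ is the paper's matrix $L$ (since $1-T$ acts as $-D$ with $D=\Fr_3-\id$), and both proofs use $M^2=I+(1-T)M$ together with the characteristic-$3$ binomial expansion to obtain $M^n-I=(1-T^m)M^m$. The only cosmetic difference is in the last step. The paper deduces $D^m(\overline{x_0})=D^m(\overline{y_0})=0$ from the commutativity of $L$ with $D$ and the invertibility of $L$. You instead apply $M^{-m}$ to $w=M^m v$ and use that $\F_{3^m}$ is stable under $R$, which amounts to the same thing.
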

\begin{proof}
    Note that $\overline{g} \colon \A^2(\O_K/\p) \longrightarrow \A^2(\O_K/\p)$
    is an $\F_3$-linear map.
    We set
    \begin{align}
        D \colon \O_K/\p \longrightarrow \O_K/\p, a \mapsto \Fr_3(a) - a = a^3 -a,
    \end{align}
    where $\Fr_3$ is the $3$-power Frobenius map.
    This map is an $\F_3$-linear map.
    Using this, the map $\overline{g}$ can be expressed 
    by the following matrix:
    \begin{align}
        L\coloneqq
        \begin{pmatrix}
            0 & 1 \\
            1 & -D
        \end{pmatrix},
    \end{align}
    where we regard the entries as elements of the subring of the ring of $\F_3$-linear endomorphisms on $\O_K/\p$ generated by $D$.
    Then, we have
    \begin{align}
        &L^2 = 
        \begin{pmatrix} 1 & 0 \\ 0 & 1\end{pmatrix}
        -
        \begin{pmatrix} D & 0 \\ 0 & D \end{pmatrix} L,\\
        &L^n = (L^2)^{3^r} = 
        \begin{pmatrix} 1 & 0 \\ 0 & 1\end{pmatrix}
        -
        \bigg(\begin{pmatrix} D & 0 \\ 0 & D \end{pmatrix} L \bigg)^{3^r}
        =
        \begin{pmatrix} 1 & 0 \\ 0 & 1\end{pmatrix}
        -
        \begin{pmatrix} D^m & 0 \\ 0 & D^m \end{pmatrix} L^m. \label{nthiterateofL}
    \end{align}
    Now, since $\overline{P}$ has period $n$ under $\overline{g}$,
    we have
    \begin{align}
        L^n \begin{pmatrix}\overline{x_0} \\ \overline{y_0}\end{pmatrix}
        =
        \begin{pmatrix}\overline{x_0} \\ \overline{y_0}\end{pmatrix},
    \end{align}
    or equivalently
    \begin{align}
        \begin{pmatrix} D^m & 0 \\ 0 & D^m \end{pmatrix} L^m
        \begin{pmatrix}\overline{x_0} \\ \overline{y_0}\end{pmatrix}=0.
    \end{align}
    Since $L$ and $\begin{pmatrix} D & 0 \\ 0 & D \end{pmatrix}$
    commute and $L$ is an automorphism, we have
    $D^m(\overline{x_0}) = D^m(\overline{y_0})=0$.
    As $D = \Fr_3 - \id$.,
    we have $D^m = D^{3^r} = \Fr_3^m - \id$.
    Thus we have $\overline{x_0}^{3^m} = \overline{x_0}$
    and $\overline{y_0}^{3^m} = \overline{y_0}$, that is 
    $\overline{x_0}, \overline{y_0} \in \F_{3^m}$.
    (Here $\F_{3^m}$ is the one in a fixed algebraic closure of $\O_K/\p$.)
    Thus $[\F_3(\overline{x_0}, \overline{y_0}) : \F_3] \leq m$.
\end{proof}

\begin{lemma}\label{lem:fldgenbycoordofnperpts}
    Let $r$ be a non-negative integer, and set $n \coloneqq 2m$ and $m\coloneqq 3^r$.
    Then, the following statements hold.
    \begin{enumerate}
        \item $\# \PerSch_n(\QQ) = 3^n$.
        \item Let $K$ be the number field generated over $\Q$ by coordinates of all $n$-periodic points of $g$ defined over $\QQ$. 
        Let $\p \subset \O_K$ be any prime ideal lying over $3$.
        Then $[\O_K/\p : \F_3] = m$. Moreover, the map induced by the reduction
        \begin{align}
            \PerSch_n(\O_K) \longrightarrow \A^2(\O_K/\p)
        \end{align}
        is bijective.
        \item The field extension $K/\Q$ is Galois and unramified over $3$.
    \end{enumerate}
\end{lemma}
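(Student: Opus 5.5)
We combine an upper bound from B\'ezout with a lower bound from reduction modulo primes over $3$, and let the reduction maps tie the two together.

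\emph{Upper bound.} The equations $X_n - x = 0$ and $Y_n - y = 0$ have degrees $3^n$ and $3^{n-1}$ in the usual Hénon normalization. For a regular Hénon map of degree $3$, the number of $n$-periodic points counted with multiplicity is $3^n$ (see \cite[Theorem 7.10]{silADS}, or use Bézout together with the absence of periodic points at infinity). Hence $\#\PerSch_n(\QQ) \leq 3^n$.

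\emph{Lower bound and finite field.} Let $K$ be as in (2) and $\p$ a prime of $\O_K$ over $3$. Every point of $\PerSch_n(\QQ)$ lies in $\PerSch_n(\O_K)$ by \cref{lem:integralityofperpt}. By \cref{lem:redcoordextdegm}, each reduction lies in $\A^2(\F_{3^m}) \subset \A^2(\O_K/\p)$, inside a fixed algebraic closure. Since $n = 2\cdot 3^r \not\equiv 0 \mod 8$, \cref{cor:redisinjective} shows the reduction map is injective. Therefore
\begin{align}
\#\PerSch_n(\QQ) \leq \#\A^2(\F_{3^m}) = 3^{2m} = 3^n .
\end{align}
This alone gives the same upper bound, so the B\'ezout step may be unnecessary; the remaining issue is surjectivity onto $\A^2(\F_{3^m})$.

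\emph{Surjectivity.} Conversely, every point of $\A^2(\F_{3^m})$ is $n$-periodic for $\overline{g}$: the identity \cref{nthiterateofL} gives $L^n = 1 - D^m L^m$, and $D^m = \Fr_3^m - \id$ vanishes on $\F_{3^m}$. So $(\PerSch_n)_{\F_3}$ has at least $3^n$ geometric points. By \cref{lem:perschetaleover3}, $\PerSch_n \to \Spec\Z$ is étale over $3$. I would show it is also finite, since $g$ is a regular automorphism and hence has no periodic points at infinity; this should follow by homogenizing and checking that the top-degree parts have only the trivial common zero mod every prime. Then $\PerSch_n$ is finite étale of some degree $N$ over $\Z_{(3)}$, whose geometric generic and special fibers both have exactly $N$ points. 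The special fiber has at least $3^n$ points and the generic fiber has at most $3^n$, so both equal $3^n$. This proves (1), and shows the reduction map is a bijection onto $\A^2(\F_{3^m})$.

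\emph{Residue field and Galois property.} Since the reduction map is a bijection onto $\A^2(\F_{3^m})$, the residue field $\O_K/\p$ contains the coordinates of all of $\A^2(\F_{3^m})$, hence $\F_{3^m}$. Conversely, $\O_K/\p$ is generated by the reductions of the generators of $\O_K[1/N']$ for a suitable $N'$ prime to $3$. Because $\PerSch_n$ is finite étale near $3$, its coordinate ring localized is étale over $\Z_{(3)}$, so $\O_K$ is unramified at $3$ and its residue field is generated by the reductions of the coordinates, which lie in $\F_{3^m}$. Thus $[\O_K/\p:\F_3] = m$.

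For (3), $K$ is the splitting field of the finite set $\PerSch_n(\QQ)$, which is stable under $\Gal(\QQ/\Q)$ because $g$ is defined over $\Q$; hence $K/\Q$ is Galois. Unramifiedness at $3$ follows because $K$ is the compositum of the residue fields of the closed points of the generic fiber of a scheme étale over $\Z_{(3)}$.

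\emph{Main obstacle.} The hardest step is properly establishing finiteness of $\PerSch_n$ over $\Z_{(3)}$, so that the point counts of the generic and special fibers can be compared. The alternative is a direct Bézout count of $3^n$ with multiplicity, together with Hensel lifting via the étaleness of \cref{lem:perschetaleover3}, applied to each $\F_{3^m}$-point in the completion.
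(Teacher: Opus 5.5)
\textbf{Where the proposal breaks down: finiteness of $\PerSch_n$ over $\Z_{(3)}$.} Your upper bound (integrality from \cref{lem:integralityofperpt}, injectivity from \cref{cor:redisinjective}, and \cref{lem:redcoordextdegm}) is exactly the paper's argument. So is your observation that \cref{nthiterateofL} makes every point of $\A^2(\F_{3^m})$ periodic for $\overline{g}$. The gap is the step you flag yourself, the finiteness of $\PerSch_n$ over $\Z_{(3)}$. Three parts of your argument rest on it: your lower bound (comparing the numbers of geometric points of the two fibers), your proof of unramifiedness in (3), and your claim in (2) that $\O_K/\p$ is generated by the reductions of the coordinates.

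The justification you sketch does not work. For $n\ge 2$, the top-degree parts of the defining equations $X_n-x$ and $Y_n-y$ are $\pm y^{3^{n-1}}$ and $\pm y^{3^n}$. These vanish simultaneously at the point $(1:0:0)$ at infinity. Moreover, $g$ does have a periodic point at infinity: its extension $(x:y:z)\mapsto(yz^2:xz^2+yz^2-y^3:z^3)$ fixes $(0:1:0)$. For the same reason, B\'ezout applied to $X_n-x$ and $Y_n-y$ only bounds the count by $3^{2n-1}$. Nor can you get by with ``\'etale over $\Z_{(3)}$'' alone. The map $\Spec\Q(\sqrt{3})\to\Spec\Z_{(3)}$ is \'etale, but its point is ramified at $3$. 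A closed point of the generic fiber tells you something about $3$ only if it specializes into the special fiber.

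\textbf{How to close the gap.} There are two ways.

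\emph{Route 1: prove finiteness with different generators.} Finiteness is true, but only after changing generators. Since $g^{\pm1}$ are automorphisms of $\A^2_\Z$, the ring automorphism $(g^{-m})^*$ of $\Z[x,y]$ carries the ideal $(X_n-x,Y_n-y)$ to the ideal generated by the two coordinate polynomials of $g^{m}-g^{-m}$. Their top-degree parts are $-x^{3^m}$ and $-y^{3^m}$, with unit coefficients. Division shows the coordinate ring is spanned over $\Z$ by $x^ay^b$ with $0\le a,b<3^m$, so $\PerSch_n$ is finite over $\Z$. With \cref{lem:perschetaleover3} it is then finite \'etale over $\Z_{(3)}$: the non-\'etale locus is closed, and its closed image misses the closed point. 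Your fiber count then goes through. You would still need to write out the compositum argument for (2) and (3): a finite \'etale $\Z_{(3)}$-algebra is a product of semilocal rings of integers of fields unramified at $3$, with residue fields generated by the images of $x,y$.

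\emph{Route 2: the paper's argument.} This is the Hensel alternative you mention at the end, and it avoids finiteness over $\Z_{(3)}$ altogether. Take $F/\Q_3$ unramified of degree $m$. Lift each of the $3^n$ points of $\A^2(\F_{3^m})$ uniquely to $\PerSch_n(\O_F)$, using \cref{lem:perschetaleover3} and the infinitesimal lifting property. Then use only that $(\PerSch_n)_\Q$ is finite, already established in the proof of \cref{thm:autoA3unbddper}. This gives $3^n$ distinct algebraic points, and no B\'ezout count is needed.

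The same argument shows $\PerSch_n(\O_F)=\PerSch_n(\Omega)$, where $\Omega$ is the algebraic closure of $\Q$ in $\overline{F}$. Hence every embedding of $K$ into $\Omega$ lands in $F$, and each completion of $K$ above $3$ embeds in $F$. That gives unramifiedness and $\O_K/\p\subseteq\F_{3^m}$ at once. Your bijectivity argument then gives equality.
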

\begin{proof}
    (1)
    First, note that the inequality $\#\PerSch_n(\QQ) \leq 3^n$ holds.
    This inequality follows, for example, from \cite[Theorem 7.10 (c)]{silADS}.
    But in our situation, we can give direct proof as follows.
    Write $\PerSch_n(\QQ) = \{ P_1, P_2,\ldots, P_k\}$,
    and let $K'$ be the number field generated by the coordinates of these points.
    Let $\p \subset \O_{K'}$ be a prime ideal lying over $3$.
    By \cref{lem:integralityofperpt}, all $P_i$'s are in $\PerSch_n(\O_{K'})$,
    and by \cref{cor:redisinjective}, the reductions of $P_i$'s are distinct.
    But \cref{lem:redcoordextdegm} implies those reductions are contained in
    $\A^2(\F_{3^m})$, which has only $3^{2m}=3^n$ points.
    Hence, we have $\# \PerSch_n(\QQ) = k \leq 3^n$.

    Now, let us consider an unramified extension $F/\Q_3$ of degree $m$
    corresponding to residue field extension $\F_{3^m}/\F_3$.
    Let $\O_F$ be the ring of integers of $F$.
    Then note that $3\O_F$ is the maximal ideal of $\O_F$
    and $\O_F/3\O_F = \F_{3^m}$.
    Consider the following diagram
    \begin{equation}
        \begin{tikzcd}
            \PerSch_n  \arrow[d, swap, "\pi_n"] & (\PerSch_n)_{\O_F} \arrow[l] \arrow[d, "(\pi_n)_{\O_F}"] &  (\PerSch_n)_{\O_F/3\O_F} \arrow[l] \arrow[d]\\
            \Spec \Z & \Spec \O_F \arrow[l] & \Spec \O_F/3\O_F \arrow[l]
        \end{tikzcd}
    \end{equation}
    where all the squares are fiber products.
    We note that $(\PerSch_n)_{\O_F/3\O_F}(\O_F/3\O_F) = \A^2(\O_F/3\O_F)$.
    Indeed, the action of $g$ on $\A^2(\O_F/3\O_F)$ is represented by the same matrix 
    $L$ as in the proof of \cref{lem:redcoordextdegm}.
    As $\O_F/3\O_F = \F_{3^m}$, \cref{nthiterateofL} implies $L^n$
    is identity on $\A^2(\O_F/3\O_F)$, hence $(\PerSch_n)_{\O_F/3\O_F}(\O_F/3\O_F) = \A^2(\O_F/3\O_F)$.
    
    Next, since $n \not\equiv 0 \ \mod 8$,
    the morphism $(\pi_n)_{\O_F}$ is \'etale at any point of $(\PerSch_n)_{\O_F/3\O_F}$ by \cref{lem:perschetaleover3}.
    Therefore, by infinitesimal lifting property of \'etale morphisms (e.g.\ \cite[Theorem 2.6.2]{Fuetale}),
    every point of $(\PerSch_n)_{\O_F/3\O_F}(\O_F/3\O_F)$
    uniquely lifts to a point of $(\PerSch_n)_{\O_F}(\O_F)$
    (lift to a $\O_F/3^k\O_F$-point inductively using infinitesimal lifting property).
    Thus, we have
    \begin{align}
        3^n = \# \A^2(\O_F/3\O_F) = \#\PerSch_n(\O_F) \leq \#\PerSch_n(F).
    \end{align}
    Since $\PerSch_n$ is finite type over $\Z$ and $\# \PerSch_n(\QQ) < \infty$,
    $(\PerSch_n)_\Q$ is finite over $\Q$.
    Thus if we fix algebraic closure $\overline{F}$ of $F$ and let
    $\Omega \subset \overline{F}$ be the algebraic closure of $\Q$ in $\overline{F}$,
    we have $\PerSch_n(F) \subset \PerSch_n(\Omega)$, where both are considered to be subsets of 
    $\PerSch_n(\overline{F})$.
    Therefore, we get $\# \PerSch_n(\QQ) = \# \PerSch_n(\Omega) \geq \# \PerSch_n(F) \geq 3^n$,
    and hence $\# \PerSch_n(\QQ) = 3^n$.
    Note that this argument shows $\PerSch_n(\O_F) = \PerSch_n(\Omega)$ as subsets of $\PerSch_n(\overline{F})$.

    (3) Let $K \subset \QQ$ be the number field generated by all the coordinates of $n$-periodic points. Since Galois conjugate of $n$-periodic point is again $n$-periodic point,
    $K$ is Galois over $\Q$.
    The image of any embedding of $K$ into $\Omega$ is contained in $F$ by the above argument.
    Hence $K/\Q$ is unramified over $3$.
    
    (2) Let $\p \subset \O_K$ be any prime ideal lying over $3$.
    Then there is a field embedding $i \colon K \longrightarrow F$ such that 
    $i^{-1}(3\O_F) = \p$.
    This induces an embedding $\O_K/\p \subset \O_F/3\O_F = \F_{3^m}$.
    Then the composite map
    \begin{align}
        \PerSch_n(\O_K) \longrightarrow \A^2(\O_K/\p) \longrightarrow \A^2(\O_F/3\O_F )
    \end{align}
    is injective by \cref{cor:redisinjective}.
    Since we have $\# \PerSch_n(\O_K)=\# \A^2(\O_F/3\O_F) = 3^n$, both maps are bijective. 
    In particular, we have $[\O_K/\p: \F_3]=m$.
\end{proof}

{\bf Notation.} For $n\geq 1$, let $K_n$ denote the number field generated over $\Q$
by all the coordinates of $n$-periodic points of $g$ defined over $\QQ$.
Its ring of integers is denoted by $\O_{K_n}$.

\begin{lemma}\label{lem:reductionistrace}
    Let $r$ be a non-negative integer, and set $n \coloneqq 2m$ and $m \coloneqq 3^r$.
    Let $(x_0,y_0) \in \PerSch_n(K_n)$ and write $g^i(x_0,y_0)=(x_i,y_i)$
    for $i \geq 0$.
    Let $\p \subset \O_{K_n}$ be any prime ideal lying over $3$.
    Let
    \begin{align}
        \zeta \coloneqq \sum_{i=0}^{n-1} 2^{n-1-i} y_i.
    \end{align}
    (Recall that this is the ``numerator'' of $z$-coordinate of the periodic point of $f$
    corresponding to $(x_0,y_0)$, see \cref{expressionofz0}.)
    Then $x_i, y_i, \zeta \in \O_{K_n}$ by \cref{lem:integralityofperpt}, 
    and
    \begin{align}
        \overline{\zeta}=\sum_{i=0}^{n-1} (-1)^{n-1-i}\overline{y_i} 
        = \Tr_{\F_\p / \F_3}(\overline{x_0} - \overline{y_0})
    \end{align}
    where $\F_\p = \O_{K_n}/\p$,
    the symbol $\overline{\ \cdot \ }$ stands for the image in $\F_\p$,
    and $\Tr_{\F_\p / \F_3}$ is the trace of the field extension $\F_\p / \F_3$.
    In particular, the element $\overline{\zeta}$ is invariant up to sign if we replace 
    $(x_0,y_0)$ with $g^i(x_0,y_0)=(x_i,y_i)$ for any $i \geq 0$.
\end{lemma}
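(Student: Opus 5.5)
The plan is to reduce modulo $\p$, turn everything into linear algebra over the commutative ring generated by the Frobenius, and evaluate the alternating sum as a polynomial in the matrix $L$ from the proof of \cref{lem:redcoordextdegm}.

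\textbf{Step 1: the first equality and the trace as a power of $D$.} Since $2\equiv -1 \bmod 3$, reducing $\zeta$ modulo $\p$ gives $\overline{\zeta}=\sum_{i=0}^{n-1}(-1)^{n-1-i}\overline{y_i}$ at once. Because $n$ is even, this equals $-\sum_{i}(-1)^{i}\overline{y_i}$. By \cref{lem:fldgenbycoordofnperpts} we have $\F_\p=\F_{3^m}$. Write $\Fr=\Fr_3$ and $D=\Fr-\id$ as in \cref{lem:redcoordextdegm}. In the commutative ring $\F_3[D]$ of $\F_3$-linear endomorphisms of $\F_\p$, with $m=3^r$, we have
\[
\Tr_{\F_\p/\F_3}=\sum_{k=0}^{m-1}\Fr^k=\frac{(1+D)^m-1}{D}=D^{m-1}.
\]
This identity holds as polynomials in $D$ over $\F_3$. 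Moreover $D^m=\Fr^m-\id=0$ on $\F_\p$. Consequently $D\circ D^{m-1}=0$ there, so any factor of $D$ next to $D^{m-1}$ may be discarded.

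\textbf{Step 2: a formal identity in $L$.} Let $v=(\overline{x_0},\overline{y_0})^{T}$. Then $(\overline{x_i},\overline{y_i})^{T}=L^iv$ and $\overline{\zeta}=-(0,1)\sum_{i=0}^{n-1}(-L)^iv$. Work in $R=\F_3[D][T]/(T^2+DT-1)$, so that $L$ satisfies $L^2=I-DL$. The discriminant $D^2+4=D^2+1$ is not a square in $\F_3(D)$, so $T^2+DT-1$ is irreducible and $R$ is a domain. In $R$ we have the following three facts.
\begin{itemize}
\item Telescoping gives $(I+L)\sum_{i<n}(-L)^i=I-L^n$.
\item By \cref{nthiterateofL}, $I-L^n=D^mL^m$.
\item Finally $(I+L)(I-L)=I-L^2=DL$.
\end{itemize}
Combining these,
\[
D^mL^m=D^{m-1}L^{m-1}(I+L)(I-L).
\]
Cancelling $I+L$ in the domain $R$ yields
\[
\sum_{i<n}(-L)^i=D^{m-1}L^{m-1}(I-L).
\]
This identity transfers to the action on $\F_\p^2$ via the ring map $R\to\mathrm{End}$.

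\textbf{Step 3: evaluate.} Since $D^{m-1}$ absorbs any factor of $D$, we may reduce $L$ modulo $D$. Modulo $D$, $L\equiv\begin{pmatrix}0&1\\1&0\end{pmatrix}$, which is an involution. As $m-1$ is even, $L^{m-1}\equiv I$. Hence
\[
\overline{\zeta}=-(0,1)D^{m-1}\begin{pmatrix}1&-1\\-1&1\end{pmatrix}v=-D^{m-1}(\overline{y_0}-\overline{x_0})=\Tr_{\F_\p/\F_3}(\overline{x_0}-\overline{y_0}).
\]

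\textbf{Step 4: invariance up to sign.} From $x_{i+1}=y_i$ and $y_{i+1}\equiv x_i-D(y_i)$, we get $\overline{x_{i+1}}-\overline{y_{i+1}}=-(\overline{x_i}-\overline{y_i})+D(\overline{y_i})$. Since $\Tr\circ D=0$, it follows that $\Tr(\overline{x_{i+1}}-\overline{y_{i+1}})=-\Tr(\overline{x_i}-\overline{y_i})$, which gives the claimed invariance up to sign.

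The main obstacle is Step 2: justifying division by $I+L$, which is not invertible in general. Passing to the domain $R$ before specializing is what handles this.
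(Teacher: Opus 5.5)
Your proof is correct and follows essentially the paper's argument: write the alternating sum as a polynomial in $L$, rewrite it as $(I-L)(DL)^{m-1}$, discard the higher powers of $D$ using $D^m=0$ on $\F_\p$, and identify $D^{m-1}$ with $\Tr_{\F_\p/\F_3}$. The one difference is that the paper gets the factorization directly in $\F_3[t]$ from $\sum_{i<n}(-t)^i=(1-t)(1-t^2)^{m-1}$, so your detour through the domain $R$ and the discriminant check is valid but unnecessary.
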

\begin{proof}
    Since $\p$ is lying over $3$ and $n$ is even, we have
    \begin{align}
        \overline{\zeta} = \sum_{i=0}^{n-1} (-1)^{n-1-i}\overline{y_i} 
        = - \sum_{i=0}^{n-1} (-1)^{i}\overline{y_i} 
    \end{align}
    Let $D \colon \F_\p \longrightarrow \F_p, a \mapsto a^3 - a$ and
    \begin{align}
        L = 
        \begin{pmatrix}
            0 & 1 \\ 1 & - D
        \end{pmatrix}
    \end{align}
    be as in the proof of \cref{lem:redcoordextdegm}.

    Consider the polynomial $P(t) = \sum_{i=0}^{n-1}(-t)^i \in \F_3[t]$.
    Then the second coordinate of
    \begin{align}
        P(L)
        \begin{pmatrix}
            \overline{x_0} \\ \overline{y_0}
        \end{pmatrix}
    \end{align}
    is $-\overline{\zeta}$.
    Recalling $n = 2m = 2\cdot 3^r$, we have
    \begin{align}
        P(t) & = \frac{1-t^n}{1+t} = \frac{(1-t^m)(1+t^m)}{1+t} 
        = \frac{(1-t^{3^r})(1+t^{3^r})}{1+t} = \frac{(1-t)^{3^r}(1+t)^{3^r}}{1+t}\\
        & = (1-t)^m(1+t)^{m-1} = (1-t)(1-t^2)^{m-1}.
    \end{align}
    Thus 
    \begin{align}
        P(L) 
        &= (1-L)(1-L^2)^{m-1} 
        =
        \bigg( 
            \begin{pmatrix}
                1 & -1 \\ -1 & 1
            \end{pmatrix}    
            +
            \begin{pmatrix}
                0 & 0 \\ 0 & D
            \end{pmatrix}   
        \bigg) 
            L^{m-1}
            \begin{pmatrix}
                D^{m-1} & 0 \\ 0 & D^{m-1} 
            \end{pmatrix}\\
        &=
        \bigg( 
            \begin{pmatrix}
                1 & -1 \\ -1 & 1
            \end{pmatrix}    
            +
            \begin{pmatrix}
                0 & 0 \\ 0 & D
            \end{pmatrix}   
        \bigg) 
        \bigg(
            \begin{pmatrix}
                0 & 1 \\ 1 & 0
            \end{pmatrix}
            -
            \begin{pmatrix}
                0 & 0 \\ 0 & D
            \end{pmatrix}
        \bigg)^{m-1}
        \begin{pmatrix}
            D^{m-1} & 0 \\ 0 & D^{m-1} 
        \end{pmatrix}.
    \end{align}
    By \cref{lem:fldgenbycoordofnperpts}, $[\F_\p:\F_3] = m$,
    and hence $D^m(\overline{x_0}) = D^m(\overline{y_0})=0$.
    Thus, we can calculate 
    \begin{align}
        P(L)
        \begin{pmatrix}
            \overline{x_0} \\ \overline{y_0}
        \end{pmatrix}
        =
        \begin{pmatrix}
            1 & -1 \\ -1 & 1
        \end{pmatrix}
        \begin{pmatrix}
            0 & 1 \\ 1 & 0
        \end{pmatrix}^{m-1}
        \begin{pmatrix}
            D^{m-1}(\overline{x_0}) \\ D^{m-1}(\overline{y_0})
        \end{pmatrix}
        =
        \begin{pmatrix}
            D^{m-1}(\overline{x_0}) -  D^{m-1}(\overline{y_0})\\
            -D^{m-1}(\overline{x_0}) + D^{m-1}(\overline{y_0}).
        \end{pmatrix}
    \end{align}
    Since $D = \Fr_3 - \id$, we have
    \begin{align}
        D^{m-1} = (\Fr_3 - \id)^{3^r - 1} = 1 + \Fr_3 + \cdots + \Fr_3^{m-1}.
    \end{align}
    Note that $(t-1)^{m-1} = 1 + t + \cdots +t^{m-1}$ in $\F_3[t]$.
    Thus, we conclude that $D^{m-1}(a) = \Tr_{\F_\p / \F_3}(a)$ holds for all $a \in \F_\p$,
    and therefore $-\overline{\zeta} = - \Tr_{\F_\p / \F_3}( \overline{x_0} - \overline{y_0})$.
\end{proof}

\begin{proposition}\label{prop:existenceperptwithhtbd}
    Let $r$ be a non-negative integer, and set $n \coloneqq 2m$ and $m \coloneqq 3^r$.
    Then, there is $(x_0,y_0) \in \PerSch_n(\O_{K_n})$ with the following property:
    \begin{align}
        \# \big\{ v \in M_{K_n} \bigm| v \mid 3,\ \Tr_{\F_{\p_v}/\F_3}( x_0 - y_0 \ \mod {\p_v}) = 0 \big\}
        \leq \frac{1}{3} \# \big\{v \in M_{K_n} \bigm| v \mid 3\big\}, \label{proportionoftracezero}
    \end{align}
    where $\p_v \subset \O_{K_n}$ is the prime ideal corresponding to the place $v$,
    and $\F_{\p_v} = \O_{K_n}/\p_v$ is the residue field at $\p_v$.
    Moreover, for such a point $(x_0,y_0)$, the corresponding $f$-periodic point $\f(x_0,y_0)$ satisfies
    \begin{align}
        h(f^i(\f(x_0,y_0))) \geq \frac{2}{3} (r+1) \log 3 \label{htlowerboundperpt}
    \end{align}
    for any $i \geq 0$.
    (Recall that $h$ is the height on $\A^3$, and $\f$ is the map defined in \cref{lem:corrofperpts}.)
\end{proposition}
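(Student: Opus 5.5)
The plan has two parts. First, show that a point satisfying \cref{proportionoftracezero} exists by averaging over all $n$-periodic points. Second, show that a large power of $3$ in the denominator $2^n-1$ of the $z$-coordinate, together with a nonzero numerator at many places over $3$, forces the height to be large.

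\emph{Existence.} Fix a place $v \mid 3$ of $K_n$ with prime $\p_v$. By \cref{lem:fldgenbycoordofnperpts}, reduction modulo $\p_v$ gives a bijection $\PerSch_n(\O_{K_n}) \to \A^2(\F_{\p_v})$, and $[\F_{\p_v}:\F_3]=m$. The map $(a,b)\mapsto \Tr_{\F_{\p_v}/\F_3}(a-b)$ is a surjective $\F_3$-linear map $\A^2(\F_{\p_v}) \to \F_3$, since the trace of a finite separable extension is surjective. Its kernel therefore has exactly $3^{n}/3$ elements. Pulling back through the bijection, exactly one third of the $3^n$ points $(x_0,y_0)\in\PerSch_n(\O_{K_n})$ have vanishing trace at $v$. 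Now count pairs $\big((x_0,y_0),v\big)$ with vanishing trace in two ways:
\begin{align}
\sum_{(x_0,y_0)} \#\{v\mid 3 : \text{trace zero at } v\} = \frac{1}{3}\, 3^n\, \#\{v \mid 3\}.
\end{align}
Hence some $(x_0,y_0)$ has at most the average, which is exactly \cref{proportionoftracezero}.

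\emph{Height bound.} Let $\zeta=\sum_{i=0}^{n-1}2^{n-1-i}y_i$, so that $\f(x_0,y_0)=(x_0,y_0,\zeta/(2^n-1))$. Since $2^n-1=4^m-1$ and $3 \mid 4-1$, the lifting-the-exponent lemma gives
\begin{align}
v_3(2^n-1)=v_3(4-1)+v_3(m)=r+1.
\end{align}
Because $K_n/\Q$ is unramified over $3$ (\cref{lem:fldgenbycoordofnperpts}), we get $|2^n-1|_v=3^{-(r+1)}$ for every $v\mid 3$. For a place $v \mid 3$ where the trace is nonzero, \cref{lem:reductionistrace} gives $\overline{\zeta}\neq 0$ in $\F_{\p_v}$. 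Since $\zeta$ is integral by \cref{lem:integralityofperpt}, this means $|\zeta|_v=1$, and so $|z_0|_v=3^{r+1}$. The naive height is bounded below by the contribution of these places:
\begin{align}
h \geq \sum_{v\mid 3}\frac{[K_{n,v}:\Q_3]}{[K_n:\Q]}\log\max(1,|z_0|_v).
\end{align}
Since $K_n/\Q$ is Galois, all local degrees over $3$ are equal, so each weight $[K_{n,v}:\Q_3]/[K_n:\Q]$ equals $1/\#\{v\mid 3\}$. At least $2/3$ of the places over $3$ have nonzero trace, so the sum is at least $\frac{2}{3}(r+1)\log 3$.

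\emph{Iterates.} We have $f^i(\f(x_0,y_0))=\f(g^i(x_0,y_0))$, because $\f(g^i(x_0,y_0))$ is the unique $f$-periodic point lying over $g^i(x_0,y_0)$ (\cref{lem:corrofperpts}). By the last assertion of \cref{lem:reductionistrace}, $\overline{\zeta}$ changes at most by a sign when $(x_0,y_0)$ is replaced by $g^i(x_0,y_0)$. So the set of places with nonzero trace is unchanged, and the same bound holds for every $i\ge 0$.

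\emph{Main obstacle.} The step needing the most care is the normalization bookkeeping: checking that the local weights are equal (this uses both that $K_n/\Q$ is Galois and that it is unramified over $3$), and that $|2^n-1|_v$ is exactly $3^{-(r+1)}$ in the paper's normalization of absolute values. Everything else follows directly from the lemmas already established.
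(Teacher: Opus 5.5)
Your proposal is correct. The height-bound half matches the paper's argument: LTE gives $v_3(2^n-1)=r+1$, \cref{lem:reductionistrace} gives $|\zeta|_v=1$ at places with nonzero trace, the local weights are equal, and you pass to iterates via the sign-invariance of $\overline{\zeta}$.

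The existence half takes a genuinely different route. The paper fixes a single prime $\p_0$ above $3$ and decomposes $\PerSch_n(\O_{K_n})$ into $\Gal(K_n/\Q)$-orbits. It then picks an orbit in which at most a third of the points reduce into the trace-zero set $B$. Finally, it converts this into a statement about places, using transitivity of the Galois group on the primes above $3$ and constancy of the condition on cosets of the decomposition group of $\p_0$.

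You instead use that reduction is bijective at \emph{every} prime above $3$, which \cref{lem:fldgenbycoordofnperpts}(2) does provide. So each place has exactly $3^{n-1}$ trace-zero points, and a double count over pairs (point, place) shows that some point has at most the average number $\frac{1}{3}\#\{v\mid 3\}$ of bad places. This is shorter and needs no decomposition-group bookkeeping.

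The paper's version gives a slightly finer conclusion: the proportion of bad places for a point equals the proportion of points in its Galois orbit that reduce into $B$ modulo the single prime $\p_0$. For the proposition as stated, the two approaches are equally good.

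Two small corrections to your ``main obstacle'' remark. In the paper's normalization every $|\cdot|_v$ with $v\mid 3$ restricts to $|\cdot|_3$ on $\Q$, so $|2^n-1|_v=3^{-(r+1)}$ does not need unramifiedness. Likewise, equality of the local weights needs only that $K_n/\Q$ is Galois.
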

\begin{proof}
    Let $G = \Gal (K_n/\Q)$ be the Galois group of the Galois extension $K_n / \Q$.
    Note that $G$ acts on $\PerSch_n(\O_{K_n})$ since $\PerSch_n$ is a scheme over $\Z$.
    Let
    \begin{align}
        \PerSch_n(\O_{K_n}) = \coprod_{i=1}^r O_i
    \end{align}
    be the $G$-orbit decomposition.
    Let us fix a prime ideal $\p_0 \subset \O_{K_n}$ lying over $3$.
    Then by \cref{lem:fldgenbycoordofnperpts}, the reduction map
    \begin{align}
        \red \colon \PerSch_n(\O_{K_n}) \longrightarrow \A^2(\O_{K_n}/\p_0) \simeq \F_{\p_0}^2
    \end{align}
    is bijective.
    Set 
    \begin{align}
        B = \{ (a,b) \in \F_{\p_0}^2 \mid \Tr_{\F_{\p_0}/\F_3}(a-b)=0 \}.
    \end{align}
    Since the map $\F_{\p_0}^2 \longrightarrow \F_3, (a,b) \mapsto \Tr_{\F_{\p_0}/\F_3}(a-b)$
    is an $\F_3$-linear surjection,we have $\# B = \# \F_{\p_0}^2 / 3$.
    Thus, there is $i \in \{1,\dots , r\}$ such that $\# \red(O_i) \cap B \leq \# \red(O_i) / 3$,
    that is 
    \begin{align}
        \# \{(\a , \b) \in O_i \mid \Tr_{\F_{\p_0}/\F_3}( \a - \b \ \mod {\p_0} ) = 0\}
        \leq \frac{1}{3} \# O_i . \label{boundoftracezeropts}
    \end{align} 
    Pick a point $(x_0,y_0) \in O_i$ for such $i$.
    We prove that this point satisfies the desired properties.
    As $K_n/\Q$ is Galois, th action of $G$ on $\{\p_v \mid v \in M_{K_n}, v \mid 3\}$ is transitive.
    Thus, we have
    \begin{align}
        &\#\big\{ v \in M_{K_n} \bigm| v \mid 3, \Tr_{\F_{\p_v}/\F_3}( x_0 - y_0 \ \mod {\p_v}) = 0 \big\}\\
        &=
        \# \{\s(\p_0) \mid \s \in G, \Tr_{\F_{\s(\p_0)}/\F_3}( x_0 - y_0 \ \mod {\s(\p_0)}) = 0  \}\\
        &=
        \# \{\s(\p_0) \mid \s \in G, \Tr_{\F_{\p_0}/\F_3}( \s^{-1}(x_0) - \s^{-1}(y_0) \ \mod {\p_0}) = 0  \}.
    \end{align}
    By \cref{boundoftracezeropts}, we obtain the inequality
    \begin{align}
        \# \{\s \in G \mid \Tr_{\F_{\p_0}/\F_3}( \s^{-1}(x_0) - \s^{-1}(y_0) \ \mod {\p_0}) = 0 \}
        \leq
        \frac{1}{3} \# G.
    \end{align}
    Moreover, if $H \subset G$ is the stabilizer of $\p_0$, 
    then $\Tr_{\F_{\p_0}/\F_3}( \s^{-1}(x_0) - \s^{-1}(y_0) \ \mod {\p_0})$
    is constant on each left coset of $H$. Thus, we conclude the inequalities
    \begin{align}
        &\# \{\s(\p_0) \mid \s \in G, \Tr_{\F_{\p_0}/\F_3}( \s^{-1}(x_0) - \s^{-1}(y_0) \ \mod {\p_0}) = 0  \}\\
        &\leq
        \frac{1}{3} \# G/H = \frac{1}{3} \# \big\{v \in M_{K_n} \bigm| v \mid 3\big\},
    \end{align}
    which proves \cref{proportionoftracezero}.

    Next, we prove the lower bound on the height.
    Note that $f^i(\f(x_0,y_0)) = \f(g^i(x_0,y_0))$.
    By the last assertion of \cref{lem:reductionistrace}, 
    each $g^i(x_0,y_0)$ also satisfies the property we just proved.
    Thus, it is enough to prove \cref{htlowerboundperpt} for $i=0$.

    Recall $\f(x_0,y_0)=(x_0,y_0,z_0)$ with
    \begin{align}
        z_0 = \frac{\zeta}{2^n - 1}, \quad \zeta \coloneqq \sum_{i=0}^{n-1} 2^{n-1-i}y_i,
    \end{align}
    where $(x_i,y_i)\coloneqq g^i(x_0,y_0)$.
    Thus, the coordinate $z_0$ is in $K_n$ as well, and
    \begin{align}
        h(\f(x_0,y_0)) &= h(x_0,y_0,z_0) 
        = \sum_{v \in M_{K_n}} 
        \frac{[(K_n)_v : \Q_v]}{[K_n:\Q]} \log\max\{|x_0|_v, |y_0|_v , |z_0|_v, 1 \}\\
        &\geq
        \sum_{\substack{v \in M_{K_n} \\ v \mid 3 }}
        \frac{[(K_n)_v : \Q_3]}{[K_n:\Q]} \log\max\{|z_0|_v, 1 \},
    \end{align}
    where $\Q_v$ is the completion of $\Q$ with respect to the restriction of $v$ to $\Q$.
    For any $v \in M_{K_n}$ with $v \mid 3$, $|\zeta|_v=1$ if 
    $\Tr_{\F_{\p_v}/\F_3}(x_0 - y_0 \ \mod {\p_v}) \neq 0$
    by \cref{lem:reductionistrace}.
    Thus
    \begin{align}
        &\sum_{\substack{v \in M_{K_n} \\ v \mid 3 }}
        \frac{[(K_n)_v : \Q_3]}{[K_n:\Q]} \log\max\{|z_0|_v, 1 \}\\
        &\geq
        \sum_{\substack{v \in M_{K_n} ,\  v \mid 3 \\ \Tr_{\F_{\p_v}/\F_3}(x_0 - y_0 \ \mod {\p_v}) \neq 0 }}
        \frac{[(K_n)_v : \Q_3]}{[K_n:\Q]} \log\max\{|z_0|_v, 1 \}\\
        &=
        \log\frac{1}{|2^n - 1|_3}
        \sum_{\substack{v \in M_{K_n} ,\  v \mid 3 \\ \Tr_{\F_{\p_v}/\F_3}(x_0 - y_0 \ \mod {\p_v}) \neq 0 }}
        \frac{[(K_n)_v : \Q_3]}{[K_n:\Q]}.
    \end{align}
    By \cref{lem:fldgenbycoordofnperpts}, we have $[(K_n)_v : \Q_3] = m$ for every $v \mid 3$.
    Thus, we have the inequalities 
    \begin{align}
        &\log\frac{1}{|2^n - 1|_3}
        \sum_{\substack{v \in M_{K_n} ,\  v \mid 3 \\ \Tr_{\F_{\p_v}/\F_3}(x_0 - y_0 \ \mod {\p_v}) \neq 0 }}
        \frac{[(K_n)_v : \Q_3]}{[K_n:\Q]}\\
        &= 
        \bigg(\log\frac{1}{|2^n - 1|_3} \bigg)
        \frac{ \#\big\{ v \in M_{K_n} \bigm| v \mid 3, \Tr_{\F_{\p_v}/\F_3}(x_0 - y_0 \ \mod {\p_v}) \neq 0  \big\} }{\#\big\{ v \in M_{K_n} \bigm| v \mid 3 \big\}}\\
        &>
       \frac{2}{3} \log\frac{1}{|2^n - 1|_3},
    \end{align}
    where the last inequality follows from \cref{proportionoftracezero}.
    Finally, by the Lifting-the-exponent lemma, we have
    \begin{align}
        |2^n - 1|_3 = |(-2)^n - 1^n|_3 = |(-2)-1|_3 |n|_3 = 3^{-r-1}.
    \end{align}
    Therefore, we get
    \begin{align}
        h(\f(x_0,y_0)) \geq \frac{2}{3}(r+1)\log 3,
    \end{align}
    and we are done.
\end{proof}

\begin{proof}[Proof of \cref{thm:autoA3unbddper} (\cref{unbddsetofperpt})]
    For each $r \geq 0$, let $P_{r0} \in \A^{3}(\QQ)$
    be the point $\f(x_0,y_0)$ constructed in \cref{prop:existenceperptwithhtbd}.
    Set $P_{ri} = f^i(P_{r0} )$ for $i = 0,\dots, 2\cdot 3^r - 1$.
    Ordering $\{P_{ri}\}_{(r, i)}$ by the lexicographic order of the index,
    we get a sequence $\{P_i\}_{i \geq 1}$.
    Then $\lim_{i \to \infty}h(P_i)=\infty$ 
    since we have $h(P_{ri}) \geq \frac{2}{3}(r+1)\log 3$
    by \cref{prop:existenceperptwithhtbd}.
    To end the proof, we prove that
    the set $\{P_i \mid i\geq 0\}$ is Zariski dense in $\A^3_{\QQ}$.

    Let $Q_i = \pr(P_i)$, where $\pr \colon \A^3_{\QQ} \longrightarrow \A^2_{\QQ}$
    is the projection to $(x,y)$-coordinates.
    Since $\{P_i \mid i\geq 0\}$ is $f$-invariant,
    the set $\{Q_i \mid i\geq 0\}$ is $g$-invariant.
    Moreover, the set $\{Q_i \mid i\geq 0\}$ is an infinite set since so is $\{P_i \mid i\geq 0\}$ 
    and \cref{lem:corrofperpts}.
    Recall that $g \colon \A^2_{\QQ} \longrightarrow \A^2_{\QQ}$ is an H\'enon map, and so
    it has no periodic curves (cf.\ \cite[Proposition 4.2]{BS91}).
    Thus, the set $\{Q_i \mid i\geq 0\}$ is Zariski dense in $\A^2_{\QQ}$,
    since otherwise an irreducible component of the Zariski closure of 
    $\{Q_i \mid i\geq 0\}$ must be a periodic curve.
    
    Let $Z = \overline{\{P_i \mid i\geq 0\}}$ be the Zariski closure in $\A^3_{\QQ}$
    and suppose $Z \neq \A^3_{\QQ}$.
    Then there is an irreducible component $W \subset Z$ such that
    $\pr \colon W \longrightarrow \A^2_{\QQ}$ is generically finite and dominant.
    Let $U \subset W$ be a non-empty open subset on which $\pr$ is quasi-finite.
    Then $\{P_i \mid i \geq 0\} \cap U$ is infinite and height unbounded,
    but its image $\{Q_i \mid i \geq 0, P_i \in U\}$ is height bounded (\cite[Corollary B]{Ka06}).
    By \cref{lem:htbddqfin} below, this is a contradiction. 
\end{proof}

\begin{lemma}\label{lem:htbddqfin}
    Let $\psi \colon X \longrightarrow Y$ be a quasi-finite 
    morphism between quasi-projective varieties over $\QQ$.
    For a subset $S \subset X(\QQ)$, if $\psi(S)$ is height bounded, then so is $S$.
\end{lemma}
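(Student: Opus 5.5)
We reduce to the case where $\psi$ is finite and both varieties are projective, and then use functoriality of heights. The first step uses Zariski's Main Theorem. It lets us factor the quasi-finite morphism $\psi$ as an open immersion $X \hookrightarrow \overline{X}$ followed by a finite morphism $\overline{\psi} \colon \overline{X} \longrightarrow Y$. Next choose an open immersion $Y \hookrightarrow \overline{Y}$ into a projective variety. By taking the normalization of $\overline{Y}$ in the function field of $\overline{X}$ (or simply the closure of the graph, followed by Zariski's Main Theorem again), we may assume that $\overline{\psi}$ extends to a finite morphism $\Psi \colon X' \longrightarrow \overline{Y}$. Here $X'$ is projective, since $\overline{Y}$ is projective and $\Psi$ is finite, and $X$ is open in $X'$.

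The conclusion does not depend on this choice of compactification. Height boundedness is independent of the compactification and of the ample divisor (the remark after the definition in the introduction). So it suffices to work with $X \subset X'$ and $Y \subset \overline{Y}$.

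Now let $H$ be an ample divisor on $\overline{Y}$. Since $\Psi$ is finite, $\Psi^*H$ is ample on $X'$. Functoriality of Weil heights gives
\begin{align}
h_{\Psi^*H} = h_H \circ \Psi + O(1)
\end{align}
on $X'(\QQ)$. By hypothesis $h_H$ is bounded on $\psi(S)$, so $h_{\Psi^*H}$ is bounded on $S$. This is exactly height boundedness of $S$ with respect to the compactification $X \subset X'$ and the ample divisor $\Psi^*H$.

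The only point requiring care is the construction of the finite projective model $\Psi$ extending $\psi$. We handle it as follows. First embed $X$ in some projective variety $P$, and let $\Gamma$ be the closure of the graph of $\psi$ in $P \times \overline{Y}$. Then $\Gamma \to \overline{Y}$ is projective and is quasi-finite over the open set containing the image of $X$. Take its Stein factorization $\Gamma \to X' \to \overline{Y}$, whose second map is finite. Zariski's Main Theorem shows that $\Gamma \to X'$ is an isomorphism over the quasi-finite locus, so $X$ embeds as an open subset of $X'$.

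Alternatively, one can avoid this construction. In that approach we work on $\Gamma$ itself with the divisor $\pi_1^*A + \pi_2^*H$, where $A$ is ample on $P$. We then bound $h_{\pi_1^*A}$ on $S$ using quasi-finiteness, applied locally via a Noetherian induction on $X$. The Stein factorization route is cleaner, and it is the one I would follow.
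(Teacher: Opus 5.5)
Your proof is correct, but it takes a different route from the paper. The paper uses Zariski's Main Theorem only to reduce to $\psi$ finite. It then takes an \emph{arbitrary} compactification $\overline{\psi}\colon \overline{X}\to\overline{Y}$ extending $\psi$, and makes no attempt to have $\overline{\psi}$ finite over $\overline{Y}\setminus Y$. It notes that $\overline{\psi}^{-1}(Y)=X$ by properness of $\psi$. For ample $\L$ on $\overline{X}$ and $\M$ on $\overline{Y}$, it picks $k$ with $\overline{\psi}_*\L^{-1}\otimes\M^{\otimes k}$ globally generated. It concludes that $\L^{-1}\otimes\overline{\psi}^*\M^{\otimes k}$ has base locus in $\overline{X}\setminus X$, which gives the one-sided bound $h_\L\le k\,h_\M\circ\psi+O(1)$ on $X(\QQ)$.

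You instead build a compactification on which the map is finite everywhere. You do this with Stein factorization of the graph closure and the proper-morphism form of Zariski's Main Theorem (EGA III, \S 4.4). That form says the locus of points isolated in their fibre is open, is the full preimage of its image in the Stein factorization, and maps isomorphically onto that image. This is a heavier input. In return, $\Psi^*H$ is ample and you get $h_{\Psi^*H}=h_H\circ\Psi+O(1)$, a genuine comparison of heights rather than an inequality. The paper's global-generation trick avoids constructing a finite model and needs only the basic factorization form of ZMT plus Serre's theorem.

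Two statements in your write-up should be corrected, though neither affects the main line.

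First, $\Gamma\to\overline{Y}$ need not be quasi-finite over any open set containing $\psi(X)$, because boundary components of $\Gamma$ can have positive-dimensional fibres over points of $\psi(X)$. For example, start from a finite $\overline{X}_0\to Y$ of dimension $\ge 2$ with two points $x_1\neq x_2$ over $y_0$. Let $X=\overline{X}_0\setminus\{x_1\}$ and compactify $X$ inside a blow-up at $x_1$; the exceptional divisor then lies in $\Gamma$ over $y_0=\psi(x_2)$. What is true, and all the ZMT step needs, is that the graph $\Gamma_X$ equals $\Gamma\cap(X\times\overline{Y})$, since the graph of $X\to\overline{Y}$ is closed in $X\times\overline{Y}$. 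Hence $\Gamma_X$ is open in $\Gamma$, and $\Gamma\to\overline{Y}$ is quasi-finite at each of its points. Openness of $\Gamma_X$ is also exactly what makes $X$ open in $X'$.

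Second, the parenthetical ``normalization of $\overline{Y}$ in the function field of $\overline{X}$'' does not work as stated. It needs $\psi$ dominant, and if $X$ is not normal then $X$ is not an open subscheme of that normalization. The right variant is the relative normalization of $\overline{Y}$ in $X$, i.e.\ the integral closure of $\O_{\overline{Y}}$ in $\psi_*\O_X$. This is finite over $\overline{Y}$ here and contains $X$ as an open subscheme by Zariski's Main Theorem.
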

\begin{proof}
    By Zariski Main Theorem, we may assume that $\psi$ is finite.
    Let us take open immersions $i \colon X \hookrightarrow \overline{X}$
    and $j \colon Y \hookrightarrow \overline{Y}$ so that
    $\psi$ extends to a morphism 
    $\overline{\psi} \colon \overline{X} \longrightarrow \overline{Y}$.
    Note that since $\psi$ is finite, we have $\overline{\psi}^{-1}(Y)=X$.
    Let $\L, \M$ be ample line bundles on $\overline{X}, \overline{Y}$ respectively.
    Let $k > 0$ be such that $\overline{\psi}_*\L^{-1} \otimes \M^{\otimes k}$
    is globally generated.
    Then the base locus of $\L^{-1} \otimes \psi^*\M^{\otimes k}$ is contained in 
    $\overline{X} \setminus X$.
    Indeed, since $\overline{\psi}_*(\L^{-1} \otimes \psi^*\M^{\otimes k}) = \overline{\psi}_*\L^{-1} \otimes \M^{\otimes k}$ and $\overline{\psi}^{-1}(Y) = X$,
    it reduces to the following.
    For a morphism of affine schemes $\Spec B \longrightarrow \Spec A$
    and a $B$-module $M$,
    if a subset $T \subset M$ generate $M$ as an $A$-module, then so does as a $B$-module.
    And this is obviously true.
    Therefore, we have $h_\L \leq k h_\M \circ \psi + O(1)$ on $X(\QQ)$ and we are done.
\end{proof}

\section{height boundedness of periodic points}\label{sec:htbddperpt}

In this section, we prove the height boundedness of periodic points of 
cohomologically hyperbolic maps (\cref{thm:perptishtbdd}).
We also prove an upper bound on the height of periodic points in terms of the height of the map when the map varies along a family.

Let us introduce some notation.
For a dominant rational map $f \colon X \dashrightarrow X$ on a projective variety over an algebraically closed field $k$, its cohomological Lyapunov exponents are 
\begin{align}
    \mu_i(f) \coloneqq \frac{d_i(f)}{d_{i-1}(f)}
\end{align}
for $i=1,\dots, \dim X$.
We set $\mu_{\dim X + 1}(f) = 0$.
When $k$ is not algebraically closed and $X$ is geometrically integral,
we set $\mu_i(f) = \mu_i(f_{\overline{k}})$, where $f_{\overline{k}}$
is the base change of $f$ to the algebraic closure $\overline{k}$ of $k$.

For a projective variety $X$ over an algebraically closed field,
$\widetilde{\Pic}(X)_{\R}$ denotes the colimit of $\Pic (X')_{\R}$ where $X'$ runs over birational models of $X$.
See \cite{xie2024algebraic} for the details.

\begin{proof}[Proof of \cref{thm:perptishtbdd}]
    Let us write $\mu_i = \mu_i(f)$.
    Suppose that $f$ is $p$-cohomologically hyperbolic where $1 \leq p \leq \dim X$.
    Then, we have $\mu_p > 1 > \mu_{p+1}$.

    Let $L$ be an ample divisor on $X$.
    The class in $\widetilde{\Pic}(X)_\R$ determined by $L$
    is also denoted by $L$.
    For $m \geq 0$, the symbol $(f^m)^*L$ denotes the pull-back of $L$ as an element of 
    $\widetilde{\Pic}(X)_\R$.
    By \cite[Theorem 3.7]{xie2024algebraic} (or \cite[Proposition 3.5]{MW22} when $p < \dim X$), 
    for any $\e \in (0,1)$, there is $m_\e \geq 1$ such that
    for every $m \geq m_\e$, the element
    \begin{align}
        (f^{2m})^*L + (\mu_p \mu_{p+1})^m L - (\e \mu_p)^m (f^m)^*L
    \end{align}
    is big as an element of $\widetilde{\Pic}(X)_\R$.
    We choose and fix $\e \in (0,1)$ close enough to $1$ and $m \geq m_\e$ large enough so that the following inequalities hold:
    \begin{align}
        &(\e^2\mu_p)^m + (\e^{-2}\mu_{p+1})^m \leq (\e \mu_p)^m\\
        &\e^2 \mu_p > 1 > \e^{-2}\mu_{p+1}.
    \end{align}
    Then, the element
    \begin{align}
        (f^{2m})^*L + (\e^2\mu_p)^m (\e^{-2}\mu_{p+1})^m L -  ((\e^2\mu_p)^m + (\e^{-2}\mu_{p+1})^m ) (f^m)^*L
    \end{align}
    is big.
    We set 
    \begin{align}
        \a = (\e^2\mu_p)^m,\  \b = (\e^{-2}\mu_{p+1})^m.
    \end{align}
    Consider the following commutative diagram
    \begin{equation}
        \begin{tikzcd}
            \widetilde{X} \arrow[d,"\pi",swap] \arrow[rd,"\f",swap] \arrow[rrd,"\psi"]& &\\
            X \arrow[r, "f^m",swap,dashed]& X \arrow[r, "f^m",swap,dashed] & X
        \end{tikzcd}
    \end{equation}
    where $\widetilde{X}$ is a projective variety, $\pi$ is a birational morphism that is isomorphic over $X \setminus (I_{f^m} \cup I_{f^{2m}})$, $\f, \psi$ are morphisms.
    Then, by the above argument, $\psi^*L + \a\b \pi^*L - (\a+\b)\f^*L$
    is big as an $\R$-divisor on $\widetilde{X}$.
    Let us fix a height function $h_L$ on $X(\QQ)$ associated with $L$.
    Then, there is a non-empty open subset $U \subset X \setminus (I_{f^m} \cup I_{f^{2m}})$ and $c \in \R$
    such that the inequality
    \begin{align}
        h_L \circ f^{2m} + \a \b h_L - (\a +\b) h_L \circ f^m \geq c
    \end{align}
    holds on $U(\QQ)$. Here, we use the fact that a height function associated with a big $\R$-divisor is bounded below on a non-empty Zariski open subset. This assertion follows from the fact that a big $\R$-divisor is $\R$-linearly equivalent to the sum of ample and effective $\R$-divisors. 
    Thus, we get
    \begin{align}
        &h_L \circ f^{2m} - \b h_L \circ f^m - c_1 
        \geq \a \big( h_L \circ f^{m} - \b h_L  - c_1 \big) \label{recineq1}\\
        &h_L \circ f^{2m} - \a h_L \circ f^m - c_2 
        \geq \b \big( h_L \circ f^{m} - \a h_L  - c_2 \big) \label{recineq2}
    \end{align}
    on $U(\QQ)$ where 
    \begin{align}
        c_1 = \frac{c}{1-\a}, \ 
        c_2 = \frac{c}{1-\b}.
    \end{align}
    Let us set
    \begin{align}
        &\Phi = h_L \circ f^{m} - \b h_L  - c_1 \\
        &\Psi = h_L \circ f^{m} - \a h_L  - c_2.
    \end{align}
    We claim that the set
    \begin{align}
        S = \{P \in X_f(\QQ) \mid \text{$P$ is $f$-periodic and $O_f(P) \subset U$}  \}
    \end{align}
    is bounded height.
    Let $P \in S$.
    Then, $P$ is in $X_{f^m}(\QQ)$, $P$ is $f^m$-periodic, and $O_{f^m}(P) \subset U$.
    By \cref{recineq1}, we have $\Phi(f^{mn}(P)) \geq \a^n \Phi(P)$ for all $n \geq 0$.
    As $\Phi(f^{mn}(P))$ can take only finitely many values, we get
    \begin{align}
        0 \geq \Phi(P) = h_L(f^m(P)) - \b h_L(P) - c_1. \label{ineq:htbound1}
    \end{align}
    Next, take $n \geq 1$ such that $f^{mn}(P) = P$.
    Then, by \cref{recineq2}, we obtain
    \begin{align}
        \Psi(P) = \Psi(f^{mn}(P)) \geq \b^n \Psi(P).
    \end{align}
    Thus, we have $\Psi(P) \geq 0$, i.e.
    \begin{align}
        h_L(f^m(P)) - \a h_L(P) - c_2 \geq 0.\label{ineq:htbound2}
    \end{align}
    By \cref{ineq:htbound1} and \cref{ineq:htbound2}, we get
    \begin{align}
        \a h_L(P) + c_2 \leq \b h_L(P) + c_1,
    \end{align}
    or equivalently
    \begin{align}
        h_L(P) \leq \frac{c_1 - c_2}{\a - \b}.
    \end{align}
    This inequality proves that the set $S$ is height-bounded.
\end{proof}

By \cref{thm:perptishtbdd}, there is an upper bound on the height of periodic points (whose orbit is contained in a certain Zariski open set) of a cohomologically hyperbolic map.
For a family of cohomologically hyperbolic maps $\{f_b\}_{b \in B}$, we can choose this upper bound in terms of the height of $b$.

\begin{theorem}\label{thm:easyuniformboundofhtofperpt}
    Let $\pi \colon X \longrightarrow B$ be a surjective morphism
    with a geometrically integral generic fiber between
    projective varieties over $\QQ$.
    Let $f \colon X \dashrightarrow X$ be a dominant rational map such that
    $\pi \circ f = \pi$.
    Let $L$ be a $\pi$-ample divisor on $X$, and $H$ an ample divisor on $B$.
    Let us height functions $h_L$, $h_H$ associated with them.
    Let $\eta \in B$ be the generic point,
    $X_{\eta} = \pi^{-1}(\eta)$ the generic fiber,
    and $f_{\eta} \colon X_{\eta} \dashrightarrow X_{\eta} $
    the induced dominant rational map.
    If $f_{\eta}$ is cohomologically hyperbolic, then there are non-empty
    Zariski open subsets $B^{\circ} \subset B$ and $U \subset X$
    with the following properties:
    \begin{enumerate}
        \item\label{shrinkB} For any $b \in B^{\circ}(\QQ)$, 
        $X_b\coloneqq\pi^{-1}(b)$ is irreducible and reduced,
        $X_b \not\subset I_f$, and the induced rational map $f_b \colon X_b \dashrightarrow X_b$ is dominant.
        \item There are $C_1, C_2 \in \R$ such that for every $b \in B^{\circ}(\QQ)$
        and $f_b$-periodic point $P \in (X_b)_{f_b}(\QQ)$ with $O_{f_b}(P) \subset U$, we have
        \begin{align}
            h_L(P) \leq C_1 h_H(b) + C_2.
        \end{align}
    \end{enumerate}
\end{theorem}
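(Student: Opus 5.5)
We run the proof of \cref{thm:perptishtbdd} on the generic fiber and then spread the resulting inequality over an open subset of $B$, keeping track of the dependence on the base point through $h_H$.

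\textbf{Step 1: a big divisor on the generic fiber.} Let $\mu_i = \mu_i(f_\eta)$ and suppose $f_\eta$ is $p$-cohomologically hyperbolic, so $\mu_p > 1 > \mu_{p+1}$. Apply \cite[Theorem 3.7]{xie2024algebraic} to $f_\eta$ over $\overline{k(\eta)}$ and fix $\e$, $m$, $\a = (\e^2\mu_p)^m$, $\b = (\e^{-2}\mu_{p+1})^m$ exactly as in the proof of \cref{thm:perptishtbdd}. Then the class
\begin{align}
    (f_\eta^{2m})^*L_\eta + \a\b L_\eta - (\a+\b)(f_\eta^m)^*L_\eta
\end{align}
is big on a model of $X_\eta$. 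Bigness over $\overline{k(\eta)}$ descends to a finite extension, and after replacing $B$ by a generically finite cover (heights on $B$ change by a bounded multiple) we may assume it holds over $k(\eta)$ itself.

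\textbf{Step 2: spreading out.} Take a resolution $\pi_X \colon \widetilde{X} \to X$ of $f^m$ and $f^{2m}$ with morphisms $\f, \psi$ as before. The divisor $D = \psi^*L + \a\b\,\pi_X^*L - (\a+\b)\f^*L$ restricts to a big class on the generic fiber. Write $D_\eta \sim_\R A_\eta + E_\eta$ with $A_\eta$ ample and $E_\eta$ effective on $\widetilde X_\eta$, and spread this out: $D + \pi^*(tH) \sim_\R A + E$ over all of $\widetilde{X}$, with $A$ ample, $E$ effective, and $t$ large. The vertical discrepancy is absorbed by $tH$, since $L$ is only $\pi$-ample and $A$ can be made ample by adding pullbacks of $H$.

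By the height machine, on the complement $U$ of $\Supp E$, the indeterminacy loci, and the exceptional locus of $\pi_X$, we get
\begin{align}
    h_L\circ f^{2m} + \a\b\, h_L - (\a+\b)\, h_L\circ f^m \geq -t\, h_H(\pi(\cdot)) - c_0 .
\end{align}
Here $h_L$ is a height for a $\pi$-ample divisor, so it is bounded below only up to $-C h_H\circ\pi$. This costs nothing, since the right-hand side is already of that form. Choose $B^\circ$ so that fibers are integral, not contained in $I_f$, $f_b$ is dominant, and $U \cap X_b \neq \emptyset$; item (1) holds by generic flatness and openness of these conditions.

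\textbf{Step 3: the recursion.} For $P \in X_b$ we have $\pi(f^k(P)) = b$, so along the orbit the lower bound is the constant $c(b) = -t\,h_H(b) - c_0$. The recursive argument of \cref{thm:perptishtbdd} then applies verbatim with $c$ replaced by $c(b)$ and gives
\begin{align}
    h_L(P) \leq \frac{c_1(b)-c_2(b)}{\a-\b}, \qquad c_1(b) - c_2(b) = c(b)\Big(\frac{1}{1-\a} - \frac{1}{1-\b}\Big).
\end{align}
This is linear in $h_H(b)$, so $h_L(P) \leq C_1 h_H(b) + C_2$.

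\textbf{Main obstacle.} The delicate point is Step 2: ensuring that bigness on the generic fiber yields a decomposition valid on the whole total space, with the error controlled only by $\pi^*H$, and that the exceptional set is a single open $U$ meeting every fiber over $B^\circ$. I would handle this by clearing denominators in the $\R$-linear equivalence over the generic point: the difference is a divisor supported on $\pi^{-1}$ of a proper closed subset of $B$, hence bounded by a multiple of $\pi^*H$ after adding an effective vertical divisor. That vertical divisor is then removed from $U$, which is harmless because we shrink to $B^\circ$ anyway.
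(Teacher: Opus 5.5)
Your proposal is correct and is essentially the paper's proof. It uses the same divisor $D=\psi^*L+\a\b\,q^*L-(\a+\b)\f^*L$ on a model $\widetilde X$; take $\widetilde X$ smooth, as the paper does, so that your clearing-denominators step can be done with Weil divisors. It then spreads an effective representative from the generic fiber over an open subset of $B$, globalizes it at the cost of $k(\pi\circ q)^*H$ exactly as in \cref{lem:genefftoglbeff}, and runs the recursion of \cref{thm:perptishtbdd} with a constant $c(b)$ affine in $h_H(b)$.

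Two simplifications are worth making. First, the generically finite base change in Step~1 is unnecessary: \cref{lem:bignessovernonclosedfield} (flat base change of $H^0$ on the geometrically integral $\widetilde X_\eta$) already gives $D|_{\widetilde X_\eta}\sim_\R$ effective over $\kappa(\eta)$ itself. Second, drop the claim that the spread-out $A$ is globally ample, since adding $\pi^*H$ cannot fix negativity on curves in bad fibers; only the $\R$-linear equivalence of $D+t\pi^*H$ to an effective divisor is ever used.
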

Here, ``$f_{\eta}$ is cohomologically hyperbolic'' means that its base change to the geometric generic fiber is cohomologically hyperbolic.

\begin{remark}
    By generic flatness, after possibly shrinking $B^{\circ}$,
    we may assume that $\pi$ is flat over $B^{\circ}$.
    Then $f|_{\pi^{-1}(B^{\circ})} \colon \pi^{-1}(B^{\circ}) \dashrightarrow \pi^{-1}(B^{\circ})$ is a family of dominant rational self-maps over $B^{\circ}$ in the sense 
    of \cite[section 5]{xie2024algebraic}.
    Then, if $f_{\eta}$ is $p$-cohomologically hyperbolic, then so are general fibers.
    Indeed, let us take real numbers $\a,\b$ so that
    \begin{align}
        d_{p-1}(f_{\eta}) < \a < d_p(f_{\eta}) > \b > d_{p+1}(f_{\eta}).
    \end{align}
    Since the map $B^{\circ} \longrightarrow \R, b \mapsto d_i(f_b)$ is continuous 
    with respect to the constructible topology (\cite[Theorem 5.1, Lemma 5.4]{xie2024algebraic}), the set
    \begin{align}
        \{b \in B^{\circ} \mid d_{p-1}(f_b) < \alpha < d_p(f_b) > \beta > d_{p+1}(f_b) \}
    \end{align}
    is open in the constructible topology.
    It contains the generic point $\eta$, so it includes a non-empty Zariski open set.
\end{remark}

\begin{remark}
    Cohomological hyperbolicity is not an open condition in the following sense.
    Consider the following family of rational maps, say over $\C$, 
    \begin{equation}
        \begin{tikzcd}[column sep=1em]
            \P^2 \times \A^1 \arrow[rr, dashed, "f"] \arrow[rd,"\pr",swap] && \P^2  \times \A^1 \arrow[ld,"\pr"] & ((x:y:1),a) \arrow[r,|->] & ((y:ax^2+y^2+x:1),a) \\
            &\A^1 & &&
        \end{tikzcd}
    \end{equation}
    where $\pr$ is the projection.
    Then $f_0 = f|_{\P^2 \times \{0\}}$ is an H\'enon map, and so it is cohomologically hyperbolic.
    But for $a \in \C \setminus \{0\}$, the map $f_a=f|_{\P^2 \times \{a\}}$ has
    dynamical degrees $d_1(f_a) = d_2(f_a)=2$.
    Here, we can see the equality $d_1(f_a)=2$ by calculating the degree sequence of $f_a$ by induction.
\end{remark}

Now, we start to prepare the proof of \cref{thm:easyuniformboundofhtofperpt}.
To avoid confusion, let us introduce some terminology and lemmas about divisors.
A Cartier divisor on a variety $X$ over a field is an element of 
$H^0(X, \mathcal{K}_X^{\times} / \O_X^{\times})$,
where $\mathcal{K}_X$ is the sheaf of meromorphic functions on $X$.
An $\R$-divisor
\footnote{This may not be standard terminology. 
Some authors refer to this as an ``$\R$-Cartier divisor''. 
As we work almost exclusively with Cartier divisors and rarely with Weil divisors,
we drop ``Cartier'' to simplify the terminology.}
on $X$ is an element of 
$H^0(X, \mathcal{K}_X^{\times} / \O_X^{\times}) \otimes_{\Z}\R$.
An $\R$-divisor $D$ is said to be effective if it can be written 
as $D = \sum_{i=1}^{r}a_i D_i$ for some $r \geq 0$,
$a_i \in \R_{\geq 0}$, and effective Cartier divisors $D_i$
(strictly speaking, $D_i$ are the images of effective Cartier divisors in $H^0(X, \mathcal{K}_X^{\times} / \O_X^{\times}) \otimes_{\Z}\R$).  
Two $\R$-divisors $D_1, D_2$ are said to be $\R$-linearly equivalent,
denoted by $D_1 \sim_\R D_2$, if
$D_1 - D_2$ is in the image of $H^0(X, \mathcal{K}_X^{\times})\otimes_{\Z}\R$.

\begin{lemma}\label{lem:bignessovernonclosedfield}
    Let $X$ be a geometrically integral projective variety
    over a field $k$.
    Let $D$ be an $\R$-divisor on $X$.
    We say that $D$ is big if its base change $D_{\overline{k}}$ is big.
    Then, the $\R$-divisor $D$ is big if and only if $D \sim_\R A+E$ for some ample $\R$-divisor $A$ and effective $\R$-divisor $E$.
\end{lemma}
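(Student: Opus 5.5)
The implication ``$\Leftarrow$'' is formal, so the real work is ``$\Rightarrow$'': descend a decomposition that exists over $\overline{k}$ back to $k$. For ``$\Leftarrow$'', suppose $D \sim_\R A+E$. Base change preserves $\R$-linear equivalence, ampleness and effectivity, so $D_{\overline{k}} \sim_\R A_{\overline{k}} + E_{\overline{k}}$. Over an algebraically closed field, ample plus effective is big by the standard criterion. Hence $D$ is big.

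For ``$\Rightarrow$'', I would first reduce to a finite extension. Over $\overline{k}$ the standard characterization gives $D_{\overline{k}} \sim_\R A' + E'$ with $A'$ ample and $E'$ effective. Only finitely many rational functions, Cartier divisors and real coefficients appear in this relation. So there is a finite extension $k'/k$, and we may take it normal, over which the relation $D_{k'} \sim_\R A' + E'$ is already defined, with $A'$ ample and $E'$ effective on $X_{k'}$. Ampleness descends along field extensions, so it can be checked over $\overline{k}$.

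Next I would handle the separable part by Galois averaging. Let $k'' = (k')^{\mathrm{Aut}(k'/k)}$, so $k'/k''$ is Galois with group $G$ and $k''/k$ is purely inseparable. Averaging over $G$ gives
\begin{align}
  D_{k'} \sim_\R \frac{1}{\#G}\sum_{\s \in G} \s^*A' + \frac{1}{\#G}\sum_{\s\in G}\s^*E'.
\end{align}
The first sum is ample and the second is effective, and both are $G$-invariant. The difficulty is that $G$-invariant $\R$-divisors need not descend to $k''$ on the nose, because $\mathcal{K}^\times/\O^\times$ has Galois cohomology; the principal-divisor part of the relation raises the same issue.

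I would avoid this by using norms instead of invariant classes. Write $\rho \colon X_{k'} \to X$ for the finite flat projection. For an effective Cartier divisor defined locally by $g$, the norm $N(g)$ defines an effective Cartier divisor on $X$, and $\rho^*N(E) = \sum_\s \s^* E$ holds up to multiplicity from the inseparable degree. Norms send principal divisors to principal divisors and preserve effectivity. Applying $\frac{1}{[k':k]}N$ to the relation on $X_{k'}$ yields
\begin{align}
  D \sim_\R \tfrac{1}{[k':k]} N(A') + \tfrac{1}{[k':k]} N(E').
\end{align}
Here $N(D_{k'}) = [k':k]D$ since $D$ is defined over $k$. The second term is effective. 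The first term pulls back to a positive multiple of the Galois sum of ample divisors, so it is ample because ampleness can be tested after base change.

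The main obstacle is making the norm of Cartier divisors rigorous for $\R$-divisors under a finite flat base change, which is possibly inseparable. This includes checking that $N$ is well defined on $\mathcal{K}^\times/\O^\times$ and compatible with $\otimes\R$. I expect this to reduce to the norm map $\rho_*\mathcal{K}_{X_{k'}}^\times \to \mathcal{K}_X^\times$ for a finite locally free morphism.
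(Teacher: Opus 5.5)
Your proof is correct, but it takes a different route from the paper's.

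\textbf{The paper's route.} The paper never descends a decomposition found over $\overline{k}$.
\begin{enumerate}
\item It first perturbs coefficients. Writing $D \sim_\R \sum_i a_i H_i - \sum_j b_j H'_j$ with $H_i, H'_j$ effective integral divisors, it moves the $a_i$ down and the $b_j$ up by small amounts to rational numbers, keeping bigness. This gives $D \sim_\R D' + E'$ with $D'$ a big $\Q$-divisor and $E'$ effective, and so reduces to $D$ an integral Cartier divisor.
\item It fixes an ample Cartier divisor $A$ defined over $k$ from the outset. Bigness of $D_{\overline{k}}$ gives $H^0(X_{\overline{k}}, \O(nD_{\overline{k}}-A_{\overline{k}})) \neq 0$ for some $n$.
\item Flat base change, $H^0(X,\O_X(nD-A))\otimes_k \overline{k} = H^0(X_{\overline{k}}, \O(nD_{\overline{k}}-A_{\overline{k}}))$, then shows that $nD-A$ is linearly equivalent to an effective divisor over $k$.
\end{enumerate}
Because the ample part is chosen over $k$, only one effective class has to be descended. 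Cohomology and base change does that for free, so no spreading out, no norms, and no descent of ampleness are needed.

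\textbf{Your route.} You keep real coefficients and transfer a whole relation $D_{k'} \sim_\R A' + E'$ down to $k$ by the norm. This works.
\begin{itemize}
\item The norm is well defined on $\mathcal{K}^\times/\O^\times$. The preimage under $\rho$ of each $\Spec \O_{X,x}$ is semi-local, so a Cartier divisor on $X_{k'}$ is principal on $\rho^{-1}(V)$ for a neighbourhood $V$ of any given point. This is the vanishing $R^1\rho_*\O^\times_{X_{k'}}=0$, which gives $\rho_*(\mathcal{K}^\times/\O^\times)=\rho_*\mathcal{K}^\times/\rho_*\O^\times$.
\item One has $N(\mathrm{div}\,\phi)=\mathrm{div}\,N(\phi)$ and $N(\rho^*D)=[k':k]\,D$.
\item One has $\rho^*N(A') = [k'':k]\sum_{\sigma\in G}\sigma^*A'$, which is ample.
\end{itemize}
The price is this norm machinery plus descent of ampleness along field extensions. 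In return you get a general transfer principle for $\R$-linear relations among ample and effective divisors. You were also right that naive Galois averaging of invariant classes would not suffice.
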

\begin{proof}
    The if part is clear.
    Suppose that $D$ is big.
    We first note that $D \sim_\R D' + E'$
    for some big $\Q$-divisor $D'$ and effective $\R$-divisor $E'$.
    Indeed, write $D \sim_\R \sum_{i} a_i H_i - \sum_j b_j H'_j$
    for some $a_i,b_j \geq 0$ and effective integral divisors $H_i, H'_j$.
    Let $\e_i, \e'_j > 0$ be small real numbers so that 
    $a_i-\e_i, b_j + \e'_j \in \Q$ and $D'\coloneqq\sum_i (a_i -\e_i)H_i - \sum_j (b_j + \e'_j)H'_j$ is still big.
    Then, we have $D \sim_\R D' + \sum_i \e_i H_i  + \sum_j \e'_j H'_j$, and this proves the claim.

    Thus, we may assume that $D$ is a $\Q$-divisor.
    To this end, we may further assume that $D$ is a Cartier divisor.
    Let $A$ be an ample Cartier divisor on $X$.
    Then, since $D_{\overline{k}}$ is big, there is $n \geq 1$ such that
    $nD_{\overline{k}} - A_{\overline{k}}$ is linearly equivalent to an effective divisor. Thus
    \begin{align}
        H^0(X, \O_X(nD-A))\otimes_k \overline{k} 
        = H^0(X_{\overline{k}}, \O_X(nD_{\overline{k}}-A_{\overline{k}})) \neq 0.
    \end{align}
    This equality implies $nD - A$ is linearly equivalent to an effective divisor.
\end{proof}

\begin{lemma}\label{lem:genefftoglbeff}
    Let $\pi \colon X \longrightarrow B$ be a surjective morphism 
    between projective varieties over a field. 
    Suppose that $X$ is smooth \footnote{Normality is actually enough, but the last step of the proof becomes less trivial.}.
    Let $D$ be an $\R$-divisor on $X$ and $H$ be an ample divisor on $B$.
    If there is a non-empty open subset $U \subset B$ such that
    $D|_{\pi^{-1}(U)}$ is $\R$-linearly equivalent to an effective $\R$-divisor,
    then there is $k \geq 1$ such that $D + k\pi^*H$ is $\R$-linearly equivalent
    to an effective $\R$-divisor.
\end{lemma}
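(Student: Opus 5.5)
We will reduce to the case of an honest effective divisor whose restriction to $\pi^{-1}(U)$ is zero, and then absorb its non-effective part, which is supported over $B\setminus U$, by pulling back a sufficiently positive multiple of $H$.

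First we pass from $\R$-linear equivalence over $\pi^{-1}(U)$ to an identity of divisors. By hypothesis there are real numbers $r_j$ and rational functions $\phi_j$ on $X$ (rational functions on $\pi^{-1}(U)$ are rational functions on $X$, since $X$ is irreducible) such that
\begin{align}
D|_{\pi^{-1}(U)} - \sum_j r_j \operatorname{div}(\phi_j)|_{\pi^{-1}(U)} = E_U,
\end{align}
where $E_U = \sum_i a_i E_{U,i}$ with $a_i \geq 0$ and $E_{U,i}$ effective Cartier divisors on $\pi^{-1}(U)$. Set $D' = D - \sum_j r_j\operatorname{div}(\phi_j)$, which satisfies $D' \sim_\R D$. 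Since $X$ is smooth, Cartier and Weil divisors agree, so each $E_{U,i}$ is the restriction of its closure $\overline{E_{U,i}}$, an effective Cartier divisor on $X$. Let $\overline{E} = \sum_i a_i \overline{E_{U,i}}$ and $G = D' - \overline{E}$. Then $G$ is an $\R$-divisor on $X$ with $G|_{\pi^{-1}(U)} = 0$. By smoothness, $G$ is an $\R$-combination of prime divisors, and each of these is contained in $\pi^{-1}(B \setminus U)$. It therefore suffices to find $k \geq 1$ such that $G + k\pi^*H$ is $\R$-linearly equivalent to an effective $\R$-divisor; adding $\overline{E}$ back then gives the claim for $D$.

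Next we construct the positivity from $H$. Shrinking $U$ does not hurt, so replace $H$ by a multiple and choose an effective divisor $H_0 \in |nH|$ with $\Supp H_0 \supset B \setminus U$. This is possible because $B\setminus U$ is a proper closed subset and $nH$ is very ample: take a section vanishing on $B\setminus U$. Then $\Supp \pi^*H_0 = \pi^{-1}(\Supp H_0)$ contains every prime divisor $\Gamma \subset \pi^{-1}(B\setminus U)$. Write $G = \sum_\Gamma g_\Gamma \Gamma$, a finite sum over such $\Gamma$. The pullback $\pi^*H_0$ is an effective Cartier divisor (pullback is defined since $\pi$ is dominant and $\Supp H_0$ does not contain $\pi(X)=B$), and each such $\Gamma$ occurs in it with positive integer multiplicity. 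Choosing $N$ with $N\cdot \mathrm{mult}_\Gamma(\pi^*H_0) \geq -g_\Gamma$ for all $\Gamma$ makes $G + N\pi^*H_0$ effective. Since $\pi^*H_0 \sim n\pi^*H$, we may take $k = Nn$.

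The main obstacle is the bookkeeping in the first step. We must extend the effective divisor from $\pi^{-1}(U)$ to $X$ and check that the leftover $G$ is supported over $B\setminus U$. Smoothness makes this trivial via Weil divisors, and this is exactly why the footnote says normality needs more care. The rest is routine.
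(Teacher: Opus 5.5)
Your proof is correct and follows essentially the same route as the paper's. You use a principal $\R$-divisor to make $D$ effective over $\pi^{-1}(U)$, use smoothness (Weil $=$ Cartier) to split off a remainder supported over $B\setminus U$, and absorb its negative part with the pullback of an effective member of $|kH|$ whose support contains $B\setminus U$. One small imprecision: a fixed very ample $|nH|$ need not contain a divisor whose support contains $B\setminus U$ (for example, a conic in $\P^2$ lies in no line), so you must take $n\gg 0$ so that $\mathcal{I}_{B\setminus U}\otimes\O_B(nH)$ is globally generated, which is what the paper's ``if $k\geq 1$ is large enough'' implicitly does.
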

\begin{proof}
    Since $\pi^{-1}(U)$ and $X$ has the same function field, 
    there is a principal $\R$-divisor $P$ on $X$ such that 
    $(D + P)|_{\pi^{-1}(U)}$ is an effective $\R$-divisor.
    Let $[D+P]$ be the associated $\R$-Weil divisor on $X$.
    Since $[D+P]|_{\pi^{-1}(U)} = [(D + P)|_{\pi^{-1}(U)}]$ is effective,
    we can write $[D+P] = E - F$ for some effective $\R$-Weil divisors $E,F$
    such that $\Supp F \subset X \setminus \pi^{-1}(U)$.
    Thus if $k \geq 1$ is large enough, there is effective $H' \sim kH$
    such that $[\pi^*H'] - F$ is effective.
    Then $D + k\pi^*H \sim_\R D + P + \pi^*H'$ and 
    the right hand side is effective since $[D + P + \pi^*H'] = E- F + [\pi^*H']$
    is effective and $X$ is smooth. 
\end{proof}

\begin{proof}[Proof of \cref{thm:easyuniformboundofhtofperpt}]
    First note that since the generic fiber of $\pi$ is geometrically integral,
    general fiber of $\pi$ is geometrically integral.
    Take a non-empty Zariski open subset $V \subset f(X \setminus I_f)$
    and set $Z = X \setminus V$.
    Then we have $\{b \in B \mid X_b \subset I_f \cup Z\} \subset B \setminus \pi(X \setminus (I_f \cup Z))$, and the right-hand side is not Zariski dense in $B$.
    These argument implies the existence of $B^{\circ}$ satisfying (\cref{shrinkB}).

    Suppose $f_{\eta}$ is $p$-cohomologically hyperbolic.
    Write $\mu_i = \mu_i(f_{\eta})$.
    Let us denote $\kappa(\eta)$ the residue field at $\eta$, which is equal to the function field of $X$.
    Let $X_{\overline{\eta}}$, $f_{\overline{\eta}}$, and $L_{\overline{\eta}}$
    be the base change of $X,f$, and $L$ to $\overline{\kappa(\eta)}$, respectively.
    As in the proof of \cref{thm:perptishtbdd}, take $\e \in (0,1)$ and 
    $m \geq 1$ so that 
    $\a = (\e^2 \mu_p)^m$ and $\b = (\e^{-2}\mu_{p+1})^m$ satisfy $\a > 1 > \b$ and that
    \begin{align}
        &\text{
        $(f_{\overline{\eta}}^{2m})^*L_{\overline{\eta}} 
        + \a \b L_{\overline{\eta}}  
        - (\a + \b)(f_{\overline{\eta}}^m)^*L_{\overline{\eta}}$
        is big as an element of $\widetilde{\Pic}(X_{\overline{\eta}})_{\R}$. \label{recdivgeogenfib}
        }
    \end{align}

    Let us consider the following commutative diagram 
    \begin{equation}
        \begin{tikzcd}
            \widetilde{X} \arrow[d,"q"] \arrow[rd,"\f"] \arrow[rrd, "\psi",bend left=20pt]& & \\
            X \arrow[r,dashed,"f^m"] & X \arrow[r,dashed,"f^m"] & X
        \end{tikzcd}
    \end{equation} 
    where $\widetilde{X}$ is a smooth projective variety and $q$ is isomorphic over a non-empty open subset $U' \subset X \setminus (I_{f^m} \cup I_{f^{2m}})$.
    Let us set
    \begin{align}
        D = \psi^*L + \a \b q^*L - (\a+\b)\f^*L.
    \end{align}
    We note that any variety over $\kappa(\eta)$ that is birational to
    $X_{\eta}$ is geometrically integral (see, for example, \cite[3.2.2 Corollary 2.14(c)]{Liu}). In particular, $\widetilde{X}_{\eta}$ is geometrically integral.
    Therefore, the base change $D|_{\widetilde{X}_{\eta}}$ of $D$ to the geometric generic fiber of $\widetilde{X}$ is in the same class with \cref{recdivgeogenfib}.
    Hence, it is big, and by \cref{lem:bignessovernonclosedfield}, we, in particular, have
    \begin{align}
        D|_{\widetilde{X}_{\eta}} \sim_{\R}  E'
    \end{align}
    for some effective $\R$-divisor $E'$ on $\widetilde{X}_{\eta}$. 
    Then, there is a non-empty open subset $W \subset B$
    and $\R$-divisor $E$ on $(\pi \circ q)^{-1}(W)$ such that
    $E|_{\widetilde{X}_{\eta}} = E'$.
    Shrinking $W$ if necessary, we may assume that
    \begin{align}
        &D|_{(\pi \circ q)^{-1}(W)} \sim_\R  E, \text{ and}\\
        &\text{$E$ is effective.}
    \end{align} 
    By \cref{lem:genefftoglbeff}, there is $k \geq 1$ such that 
    $D + k(\pi \circ q)^*H \sim_\R F$ for some effective $\R$-divisor $F$
    on $\widetilde{X}$.

    Now, we set 
    \begin{align}
        U = U' \setminus \big(I_{f}  \cup q(\Supp F)  \big).
    \end{align}
    Then, there is $C \in \R$ such that for any $P \in U(\QQ)$, we have
    \begin{align}
        h_L \circ \psi(q^{-1}(P)) + \a \b h_L \circ q(q^{-1}(P)) - (\a + \b) h_L \circ \f(q^{-1}(P)) \geq -kh_H \circ \pi \circ q (q^{-1}(P))+ C,
    \end{align}
    or equivalently,
    \begin{align}
        h_L(f^{2m}(P)) + \a \b h_L (P) - (\a + \b) h_L(f^m(P)) \geq -kh_H(\pi(P))+ C.
    \end{align}
    Let $B^{\circ} \subset B$ be any open subset satisfying the condition 
    (\cref{shrinkB}).
    Let $b \in B^{\circ}(\QQ)$ and $P \in (X_b)_{f_b}(\QQ)$
    be such that $P$ is $f_b$-periodic and $O_{f_b}(P) \subset U$.
    Then $P \in X_f(\QQ)$, $O_f(P) \subset U \cap X_b$.
    In particular, for any point $Q \in O_{f^m}(P)$, we have
    \begin{align}
        h_L(f^{2m}(Q)) + \a \b h_L (Q) - (\a + \b) h_L(f^m(Q)) \geq -kh_H(b)+ C.
    \end{align}
    By the same calculation as in the proof of \cref{thm:perptishtbdd}, if we set
    \begin{align}
        c_1 = \frac{-kh_H(b)+ C}{1-\a},\ 
        c_2 = \frac{-kh_H(b)+ C}{1-\b},
    \end{align}
    then 
    \begin{align}
        h_L(P) \leq \frac{c_1 - c_2}{\a - \b}.
    \end{align}
    Since $\a, \b, k, C$ are determined by $X, f, L, h_L, h_H$, our assertion is proved.
\end{proof}

\section{Preperiodic points with unbounded height}
\label{sec:unbddpreperpts}

In this section, we prove \cref{thmintro:unbddpreper}.
Namely, we give an example of a cohomologically hyperbolic map
with a set of preperiodic points of unbounded height.
We first introduce a notation.
For a dominant rational map $f \colon X \dashrightarrow X$
on a projective variety $X$ over a field, let 
\begin{equation}
    \begin{tikzcd}
        \G_f \arrow[rd,"g"] \arrow[d,"\pi"]& \\
        X \arrow[r,dashed, "f",swap] & X
    \end{tikzcd}
\end{equation}
be the graph of $f$.
We say that $f$ is finite over an open subset $U \subset X$ if the map $g|_{g^{-1}(U)} \colon g^{-1}(U) \longrightarrow U$ is finite, and $g^{-1}(U) \cap \pi^{-1}(I_f) = \emptyset$.
We write
\begin{align}
    X_f^{\rm back} = \bigg\{P \in X \biggm| \parbox{20em}{for every $n \geq 1$, there is a Zariski opne neighbourhood $P \in U$ over which $f^n$ is finite.} \bigg\}.
\end{align}
See \cite[Appendix B, B.1]{matsuzawa-note-ad-dls} for the basic properties of this set.
The definition in \cite{matsuzawa-note-ad-dls} is slightly different
and gives a priori a smaller set, but the two definitions are actually equivalent. 
To see this, for $P \in X$ over which all $f^n$ are finite,
construct a sequence as in \cite[Lemma B.4]{matsuzawa-note-ad-dls} so that $P \in W_0$
by induction on $n$.

Now let $d \geq 3$, and $p$ be a prime number.
Let 
\begin{align}
    f = f_{d,p} \colon \A^2_{\QQ} \dashrightarrow \A^2_{\QQ}, (x,y) \mapsto \bigg( \frac{x^d}{y^2}-\frac{1}{p^{d-3}}, x  \bigg).
\end{align}
We use the same notation $f$ for its projectivization as well:
\begin{align}
    f  \colon \P^2_{\QQ} \dashrightarrow \P^2_{\QQ}, (x:y:z) 
    \mapsto \bigg(x^d - \frac{1}{p^{d-3}}y^2z^{d-2} : xy^2z^{d-3} : y^2 z^{d-2}  \bigg).
\end{align}

\begin{theorem}\label{thm:exampleofunbddpreper} \ 
    \begin{enumerate}
        \item\label{mainexdyndeg} We have $d_1(f) = d$, $d_2(f)=2$. In particular, the map $f$ is $1$-cohomologically hyperbolic.
        \item\label{mainexunbddpreper} Let $d = 4$, $p=2$. Let $P_0=(\frac{1+\sqrt{3}}{2}:\frac{1+\sqrt{3}}{2}:1)$.
        Then $f(P_0)=P_0$, $P_0 \in (\P^2)_f^{\rm back}$, and there is a sequence of points $\{P_n\}_{n \geq 1}$
        of $(\P^2 \setminus I_f)(\QQ)$
        such that 
        \begin{align}
            &f(P_n)= P_{n-1} \quad \text{for $n \geq 1$},\\
            &\text{$\{P_n \mid n \geq 0 \}$ is Zariski dense},\\
            &\lim_{n \to \infty} h(P_n) = \infty,
        \end{align}
        where $h$ is  the naive height function on $\P^2(\QQ)$.
    \end{enumerate}
\end{theorem}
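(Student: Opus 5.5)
The plan has two parts: compute the dynamical degrees via algebraic stability, then build an explicit backward orbit of $P_0$ and control its $2$-adic valuations.

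\emph{Dynamical degrees.} For \cref{mainexdyndeg}, I will compute $d_2(f)$ as the topological degree and $d_1(f)$ via algebraic stability.

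For $d_2$: in affine coordinates, a point $(a,b)$ with $b\neq 0$ and $a+p^{-(d-3)}\neq 0$ has preimages $(x,y)$ with $x=b$ and $y^2=b^d/(a+p^{-(d-3)})$. These are exactly two points, so $d_2(f)=2$.

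For $d_1$: the homogeneous map has degree $d$, and its components have no common factor. The indeterminacy locus is $I_f=\{(0:0:1),(0:1:0)\}$. The Jacobian determinant vanishes only along the lines $x=0$, $y=0$, $z=0$, so these are the only candidates for contracted curves. Their images are:
\begin{align}
  \{z=0\}\mapsto (1:0:0),\quad \{y=0\}\mapsto (1:0:0),\quad \{x=0\}\mapsto (-p^{-(d-3)}:0:1)\mapsto (1:0:0).
\end{align}
The point $(1:0:0)$ is fixed and not in $I_f$, so no contracted curve is eventually mapped into $I_f$. Hence $f$ is algebraically stable, $\deg f^n=d^n$, and $d_1(f)=d>2=d_2(f)$.

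\emph{The backward orbit for $d=4$, $p=2$.} Write $P_n=(u_{n+1}:u_n:1)$ affinely, so $f(P_n)=P_{n-1}$ becomes
\begin{align}
  u_{n+1}^2=\frac{u_n^4}{u_{n-1}+\frac12},
\end{align}
with $u_0=u_1=t=(1+\sqrt3)/2$. Indeed $t^2-t-\tfrac12=0$ gives $f(P_0)=P_0$. Each step involves a choice of square root.

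For $P_0\in(\P^2)^{\rm back}_f$, I will show the whole backward tree of $P_0$ avoids the lines $y=0$ and $x=0$ and the points $(1:0:0)$, $(-1/2:0:1)$. This holds because all $u_n\neq 0$ and $u_{n-1}+\tfrac12\neq0$, which the valuation estimates below guarantee. Then $f^n$ is finite over a neighbourhood of $P_0$, by the basic properties in \cite[Appendix B]{matsuzawa-note-ad-dls}.

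\emph{Height growth.} I will work $2$-adically. Let $w_n=v(u_n)$, normalized so that $v(2)=1$. In $\Q_2(\sqrt3)$ the prime $2$ ramifies, and $v(1+\sqrt3)=\tfrac12$, so $w_0=w_1=-\tfrac12$. As long as $w_{n-1}>-1$, the term $\tfrac12$ dominates $u_{n-1}+\tfrac12$. The recursion then reads $w_{n+1}=2w_n+\tfrac12$, whose fixed point $-\tfrac12$ is repelling. Once some $w_k<-1$, we get $v(u_{k}+\tfrac12)=w_k$ and
\begin{align}
  w_{n+1}=2w_n-\tfrac12 w_{n-1}.
\end{align}
This recursion has characteristic roots $1\pm\tfrac{\sqrt2}{2}$, so it forces $w_n\to-\infty$ geometrically.

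The key step is therefore to choose square-root branches at the start, e.g.\ $u_2=-t$ or a later sign, so that a cancellation occurs and $v(u_{k}+\tfrac12)>-1$ for some small $k$. That pushes $w_{k+2}$ below $-\tfrac12$, and the repelling dynamics then carry it below $-1$. I expect this to be the main obstacle. It requires an explicit computation of the first few $u_n$ in a concrete extension and their expansions in a uniformizer.

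Crucially, the estimate must hold at \emph{every} place above $2$ of the field generated by $u_n$. Then $h(P_n)\geq\log\max\{|u_n|_v,1\}\to\infty$ after averaging over places. To achieve this, I will make the cancellation depend only on valuation data of the recursion that is Galois-stable, by passing to the Galois closure and noting that conjugate backward orbits satisfy the same recursion. If that fails, I will instead use archimedean estimates with the branch $|u_{n+1}|=|u_n|^2/|u_{n-1}+\tfrac12|^{1/2}$.

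\emph{Zariski density.} Suppose $Z=\overline{\{P_n\}}$ is a proper subset. Since $\{P_n\}$ is infinite and each $P_{n-1}=f(P_n)$, some irreducible curve component $C$ contains infinitely many $P_n$ and satisfies $f(C)\subset Z$. Hence $C$ is $f$-periodic up to finitely many components. I would then rule out periodic curves through $P_0$ on which the $P_n$ have unbounded height. Such a curve would carry infinitely many preperiodic points of unbounded height for $f|_C$. On a curve this contradicts the canonical-height boundedness of preperiodic points when $\deg f|_C\geq2$, and it contradicts finiteness of the backward orbit when $f|_C$ is an automorphism.
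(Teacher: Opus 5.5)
Your part (1) matches the paper's algebraic-stability argument. One small slip: for $d=3$ the line $z=0$ is mapped onto itself, not to $(1:0:0)$. That line is not contracted, so stability is unaffected.

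The genuine gap is the height growth in part (2): the $2$-adic mechanism you propose cannot work. You start exactly at $w_0=w_1=-\tfrac12$. Since $-\tfrac12>-1=v(\tfrac12)$, the ultrametric inequality gives $v(u_{n-1}+\tfrac12)=-1$ exactly. Two terms of different valuation never cancel, whatever square-root branch you pick. Hence $w_{n+1}=2w_n+\tfrac12=-\tfrac12$, and by induction $v(u_n)=-\tfrac12$ for every $n$, at every place above $2$, along every branch of the backward tree. The ``repelling fixed point'' is irrelevant, since the recursion is exact and you sit on the fixed point. So the places above $2$ contribute exactly $\tfrac12\log 2$ to $h(P_n)$, and no choice such as $u_2=-t$ helps.

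This constancy is precisely the content of \cref{lem:backwardorbitpadicabs}. It puts the whole backward tree inside the finite locus $U=D_+((x+\tfrac12 z)yz)$ of \cref{lem:finitelocusoff}, and that is what gives $P_0\in(\P^2)^{\rm back}_f$. Your back-finiteness step should rest on this, not on the growth estimates. Also, your recursion corresponds to $P_n=(u_n:u_{n+1}:1)$, not $(u_{n+1}:u_n:1)$.

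What is missing is a genuine source of growth, together with a local estimate that does not depend on the choice of square roots. Such an estimate holds simultaneously at all places of $K_m$ above a fixed place of $K_n$, and that, not a Galois-closure argument, is what makes the averaging in the height work.

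The paper does this archimedeanly in \cref{lem:preimageabsogrowth}. For $t\ge(1+\sqrt3)/2$, the region $\frac{1}{1+1/(2t)}|b|\ge|a|\ge t$ is preserved by passing to any preimage $(a',b')$, and $|b'|\ge|b|^{(d-1)/2}$. This gives $h(P_m)\ge c\,((d-1)/2)^{m-n}$. The orbit enters the region at $P_3=(x_0,y_3)$ with $y_3^2=x_0^4/(\tfrac12-x_0)$. There the archimedean smallness $|x_2+\tfrac12|=|\tfrac12-x_0|=\sqrt3/2<x_0^2=|x_0+\tfrac12|$ plays the role of the cancellation you were hoping for.

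Alternatively, your own recursion works if you move it from $2$ to $3$. There $v(x_0)=v(\tfrac12)=0$, while $v(\tfrac12-x_0)=v(-\sqrt3/2)=\tfrac12>0$. So the branch $u_2=-x_0$ really does produce a cancellation, and $v(y_3)=-\tfrac14$ at every place above $3$. Once $v(u_{n-1})<0$ you have $v(u_{n-1}+\tfrac12)=v(u_{n-1})$, so $w_{n+1}=2w_n-\tfrac12 w_{n-1}$ on every branch and at every place above $3$. This forces $w_{n+1}\le\tfrac32 w_n$ and $w_n\to-\infty$; it is exactly the computation in \cref{prop:backorbitdeffldunbdd}. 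It yields $h(P_n)\ge -w_{n+1}\log 3\to\infty$.

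Either repair supplies the unboundedness your Zariski-density argument needs; apart from that, your density argument is essentially the paper's.
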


We prove \cref{thm:exampleofunbddpreper}(\cref{mainexdyndeg}) in \cref{subsec:dyndeg}.
In \cref{subsec:backwardoffixedpt}, we prove
\cref{thm:exampleofunbddpreper}(\cref{mainexunbddpreper}).
See \cref{prop:unbddpreperrecipe} for a more general statement.
Note also that we need $d \geq 4$ in \cref{subsec:backwardoffixedpt},
but the argument in \cref{subsec:dyndeg} is valid for $d=3$ as well.

We first observe a useful property of the map $f$.
For a homogeneous polynomial $\f$ in $x,y,z$,
$V_+(\f) \subset \P^2_{\QQ}$ denotes the Zariski closed subset defined by $\f$,
and $D_+(\f) = \P^2_{\QQ} \setminus V_+(\f)$.

We use the notation 
\begin{align}
    U \coloneqq D_+((x+p^{-(d-3)}z)yz), \quad 
    V\coloneqq D_+(xyz)
\end{align}
throughout this section.

\begin{lemma}\label{lem:finitelocusoff}
    The set of indeterminacy locus of $f$ is
    \begin{align}
        I_f = \{(0:1:0), (0:0:1)\}.
    \end{align}
    Moreover, the set $U$ is the largest open subset over which $f$ is finite, and $f$ induces a finite morphism $V\longrightarrow U$.
\end{lemma}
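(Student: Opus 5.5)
Write $c = p^{-(d-3)}$, so that $f = (F_0:F_1:F_2)$ with $F_0 = x^d - c\,y^2z^{d-2}$, $F_1 = xy^2z^{d-3}$, $F_2 = y^2z^{d-2}$. The plan has three parts: compute $I_f$ as the common zero locus of the $F_i$; check that $f$ restricted to $V$ is a finite morphism onto $U$ using the affine formula; and show that $f$ cannot be finite over any point outside $U$ by exhibiting a contracted curve or an indeterminacy point over each such point.

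\emph{Indeterminacy.} First I check that $F_0,F_1,F_2$ have no common factor, so the listed formula is reduced: $x^d$ has no factor $y$ or $z$, and $x$ does not divide $F_0$. Then I solve $F_0=F_1=F_2=0$. The equation $F_2=0$ gives $y=0$ or $z=0$, and in either case $F_0=0$ forces $x=0$. This leaves exactly $(0:0:1)$ and $(0:1:0)$, and both are genuine zeros (note $d-3\ge 0$ is used for $F_1$). So $I_f=\{(0:1:0),(0:0:1)\}$.

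\emph{Finiteness of $V\to U$.} On $V\subset\A^2=\{z\neq0\}$ the map is $(x,y)\mapsto (u,v)=(x^d/y^2-c,\;x)$. The image satisfies $v\neq0$, and $u+c=x^d/y^2\neq0$, so $f(V)\subset U$ because $U=\{z\ne0,\,y\ne0,\,x+cz\ne0\}$. Conversely, given $(u,v)\in U$, the preimages in $V$ are $x=v$ and $y^2=v^d/(u+c)$. Hence $V=\Spec \mathcal{O}(U)[y]/(y^2(u+c)-v^d)$ with $v^d$ invertible, which is finite (degree $2$) over $U$.

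\emph{$U$ is the full preimage and is maximal.} To say $f$ is finite over $U$ in the graph sense, I need $g^{-1}(U)=V$ and $g^{-1}(U)\cap\pi^{-1}(I_f)=\emptyset$. The curves $\{x=0\}$, $\{y=0\}$, $\{z=0\}$ minus $I_f$ are mapped, respectively, to $(c:0:1)$ via $(-cy^2z^{d-2}:0:y^2z^{d-2})$, to $(1:0:0)$, and to $(1:0:0)$; none of these lies in $U$. The fibers of the graph over the two indeterminacy points also need care. Blowing up, the exceptional images are the closures of images of nearby points, and I expect them to be the lines $\{y=0\}$ or $\{z=0\}$, or points on them. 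I would verify this by parametrizing: near $(0:0:1)$ set $x=t a$, $y=t b$, giving $f \sim (t^{d-2}a^d - c b^2 : t a b^2 : b^2)$ with limit $(-c:0:1)$ generically, and further blow-ups stay inside $V_+(y)$; near $(0:1:0)$ the analogous computation gives images in $V_+(yz)$. For maximality, I observe that $V_+(y)$ and $V_+(z)$ contain the contracted images $(1:0:0)$ and $(c:0:1)$, so finiteness fails over those points. Moreover $f$ is not dominant onto any neighbourhood structure near points of $V_+(y)\cup V_+(z)$ other than the exceptional images; in fact $f(\P^2\setminus I_f)\cap\{y\neq 0\ \text{or}\ z\ne0\}$ requires $xy z\ne0$, so general points of these lines have empty or positive-dimensional fibers, and in either case the map is non-finite over them. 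For points of $V_+(x+cz)$ in the complement, the fiber is the whole curve $\{x=0\}$, which is contracted to $(-c:0:1)$; so no neighbourhood of the line $x+cz=0$ works.

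I expect the main obstacle to be precisely this last part: controlling the graph fibers over the two indeterminacy points, and showing that points of $V_+(x+cz)$ other than $(-c:0:1)$ still fail finiteness. For the latter, the plan is to use that finiteness over an open set is an open condition containing a dense set, together with properness and the fact that quasi-finite fibers have to be nonempty and finite everywhere. I would try to argue that over any open $W\ni Q$ with $Q\in V_+(x+cz)$, the set $W$ meets the contracted point or the line's image closure, forcing an infinite fiber.
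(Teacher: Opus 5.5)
The computation of $I_f$ and the proof that $V\to U$ is finite (via $\mathcal{O}(V)\cong\mathcal{O}(U)[y]/((u+c)y^2-v^d)$ with $u+c$ a unit) are correct and amount to the paper's integrality check. The genuine gap is in the maximality part, at the points of $\P^2\setminus U$ other than $(1:0:0)$ and $(-c:0:1)$. (Note the sign: $V_+(x)$ goes to $(-c:0:1)$, not $(c:0:1)$.) Your mechanism there cannot work, for three reasons. First, a finite morphism may perfectly well have empty fibres. Second, the fibre of $f|_{\P^2\setminus I_f}$ over a point of $V_+(x+cz)$ other than $(-c:0:1)$ is not $V_+(x)\setminus I_f$ but empty, since $F_0+cF_2=x^d$. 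Third, an open $W\ni Q$ need not contain the closed point $(-c:0:1)$. More decisively, near a general point $Q$ of $V_+(y)\cup V_+(z)\cup V_+(x+cz)$ the graph projection $g\colon\Gamma_f\to\P^2$ is proper and quasi-finite, hence finite over a small neighbourhood of $Q$. So no argument by infinite fibres can exclude such $Q$; the only thing that fails there is the second clause of the definition, $g^{-1}(W)\cap\pi^{-1}(I_f)=\emptyset$.

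The missing idea, which is the paper's argument, is that $g$ is proper and dominant, hence surjective. If $f$ is finite over an open $W$, every $Q\in W$ has a non-empty graph fibre, which by the second clause lies over $\P^2\setminus I_f$. Hence $W\subset f(\P^2\setminus I_f)$. For $d\geq 4$ the shape of $F_0,F_1,F_2$ gives $f(\P^2\setminus I_f)\setminus U=\{(1:0:0),(-c:0:1)\}$, and these two points are excluded by the contracted lines, as you say. Alternatively, your arc computation done correctly shows $g(\pi^{-1}(I_f))=\P^2\setminus U$ for $d\ge 4$, which gives maximality directly from the second clause. As written, however, it is wrong at $(0:1:0)$. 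In the chart $y=1$, arcs $x=at+\cdots$, $z=bt+\cdots$ have limits $(-cb:a:b)$, filling all of $V_+(x+cz)$ rather than lying in $V_+(yz)$. This error does not hurt $U\subset W$, which needs no fibre computation anyway: $\pi^{-1}(V)\to U$ is finite, so $\pi^{-1}(V)$ is open and closed in the irreducible $g^{-1}(U)$ and therefore equals it.

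Finally, $V_+(z)$ is contracted to $(1:0:0)$ only for $d\geq 4$. For $d=3$ one has $f(x:y:0)=(x^2:y^2:0)$. A direct check then gives $g(\pi^{-1}(I_f))\subset V_+(y)\cup V_+(x+z)$, and shows that $f$ is finite over $D_+((x+z)y)\supsetneq U$. So the maximality claim genuinely needs $d\geq 4$, and the paper's displayed intersection uses this silently as well.
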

\begin{proof}
    The first statement is clear.
    For the second statement, we first prove that $f$ induces 
    finite morphism $V \longrightarrow U$.
    The morphism $f|_{D_+(yz)} \colon D_+(yz) \longrightarrow \A^2$
    corresponds to the ring homomorphism
    \begin{align}
         \QQ[x,y] \longrightarrow \QQ[x,y]_y, x \mapsto \frac{x^d}{y^2} - \frac{1}{p^{d-3}}, y \mapsto x.
    \end{align}
    This map induces a ring homomorphism
    \begin{align}
        \f \colon \QQ[x,y]_{(x+p^{-(d-3)})y} \longrightarrow \QQ[x,y]_{xy}, x \mapsto \frac{x^d}{y^2} - \frac{1}{p^{d-3}}, y \mapsto x.
    \end{align}
    We claim that $\f$ is finite.
    To this end, it is enough to show $x,y,x^{-1}, y^{-1}$ are integral over $\Ima \f$.
    Since $x, x^{-1} \in \Ima \f$, they are integral.
    The equations
    \begin{align}
        &(y^{-1})^2 - x^{-d} \bigg( \bigg( \frac{x^d}{y^2} - \frac{1}{p^{d-3}} \bigg) + \frac{1}{p^{d-3}}  \bigg)=0\\
        &y^2 - x^{d} \bigg( \bigg( \frac{x^d}{y^2} - \frac{1}{p^{d-3}} \bigg) + \frac{1}{p^{d-3}}  \bigg)^{-1}=0
    \end{align}
    show $y, y^{-1}$ are integral.
    Thus, we have proven that $\f$ is finite, 
    and hence $f$ induces a finite morphism $V \longrightarrow U$.
    Let $W \subset \P^2$ be the largest open subset over which $f$ is finite.
    We know $U \subset W$.
    Note that we must have $W \subset f(\P^2 \setminus I_f)$.
    By the form of $f$, we can see
    \begin{align}
        f(\P^2 \setminus I_f) \cap \big(V_+(y) \cup V_+(z) \cup V_+(x+p^{-(d-3)}z) \big)
        =\{ (1:0:0), (-p^{-(d-3)} : 0 : 1)  \}.
    \end{align}
    Thus, we have $W \setminus U \subset f(\P^2 \setminus I_f)  \setminus U = \{ (1:0:0), (-p^{-(d-3)} : 0 : 1)  \}$.
    Since 
    \begin{align}
        &f|_{\P^2 \setminus I_f}^{-1}(1:0:0) \supset V_+(y) \setminus I_f\\
        &f|_{\P^2 \setminus I_f}^{-1}(-p^{-(d-3)} : 0 : 1) \supset V_+(x) \setminus I_f,
    \end{align}
    the points $(1:0:0)$ and $(-p^{-(d-3)} : 0 : 1)$ are not in $W$.
    Thus, we conclude the equality $U = W$.
\end{proof}

\subsection{Dynamical degree}\label{subsec:dyndeg}

As before, let $d \geq 3$ be an integer, $p$ a prime number, and $f= f_{d,p}$.
In this subsection, we prove the first part of \cref{thm:exampleofunbddpreper}.

\begin{proposition}\label{prop:dyndegoff}
    We have $d_1(f) = d$ and $d_2(f)=2$.
    In particular, the rational map $f$ is $1$-cohomologically hyperbolic.
\end{proposition}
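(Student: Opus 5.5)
The plan is to compute the two dynamical degrees separately: $d_2(f)$ as the topological degree, and $d_1(f)$ by showing that $f$ is algebraically stable on $\P^2$. Since $d \geq 3 > 2$, the equalities $d_1(f)=d$ and $d_2(f)=2$ give a unique maximum at $i=1$, so $f$ is $1$-cohomologically hyperbolic.

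\emph{Step 1: $d_2(f)=2$.} On a surface, $d_2(f)$ is the topological degree, i.e.\ the number of preimages of a general point. I will work in the affine chart with $f(x,y)=(x^d/y^2 - p^{-(d-3)}, x)$. Fix a general point $(a,b)$ with $ab\neq 0$ and $a+p^{-(d-3)}\neq 0$. Solving $f(x,y)=(a,b)$ forces $x=b$ and
\begin{align}
    y^2 = \frac{b^d}{a+p^{-(d-3)}},
\end{align}
which has exactly two nonzero solutions $y$. This is consistent with the finite morphism $V\to U$ of \cref{lem:finitelocusoff}, whose generic fiber has two points. Hence $d_2(f)=2$.

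\emph{Step 2: $d_1(f)=d$ via algebraic stability.} The homogeneous components of $f$ have degree $d$ and no common factor; for instance $f$ at $(1:0:0)$ equals $(1:0:0)$, so the components do not all vanish on a curve. Thus $\deg f = d$ and $d_1(f)\leq d$. For the reverse inequality I will use the Fornæss--Sibony criterion: $f$ is algebraically stable, so that $\deg f^n = d^n$ and hence $d_1(f)=d$, provided no curve is mapped by some iterate $f^n$ into $I_f=\{(0:1:0),(0:0:1)\}$ (\cref{lem:finitelocusoff}).

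The curves contracted by $f$ lie in the zero locus of the Jacobian determinant, so the candidates are $V_+(x)$, $V_+(y)$ and $V_+(z)$. Reading off the formula:
\begin{enumerate}
    \item $V_+(y)\setminus I_f$ and $V_+(z)\setminus I_f$ are sent to $(x^d:0:0)=(1:0:0)$;
    \item $V_+(x)\setminus I_f$ is sent to $(-p^{-(d-3)}:0:1)$.
\end{enumerate}
Next I follow the forward orbits of these two image points. The point $(1:0:0)$ is fixed by $f$. The point $(-p^{-(d-3)}:0:1)$ has $y=0$, so it is sent to $(1:0:0)$. Neither orbit ever meets $I_f$. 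Hence $f$ is algebraically stable, and $d_1(f)=\lim(\deg f^n)^{1/n}=d$.

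The main obstacle is making sure the list of contracted curves is complete. I will check this via the Jacobian determinant of the homogeneous map: it should factor as a product of powers of $x$, $y$ and $z$, up to a constant. In the affine chart this is immediate, since the affine Jacobian determinant of $f(x,y)=(x^d/y^2 - p^{-(d-3)}, x)$ is $-2x^d/y^3$, which vanishes or is undefined only along $x=0$ and $y=0$; the line at infinity $V_+(z)$ is handled separately as above.
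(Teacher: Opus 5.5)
Your proposal is correct and is essentially the paper's proof: $d_2(f)=2$ from the two preimages of a general point, and $d_1(f)=d$ from algebraic stability, because every contracted line lands on $(1:0:0)$ or $(-p^{-(d-3)}:0:1)$, whose forward orbits stay at the fixed point $(1:0:0)\notin I_f$. The only real difference is that you find the contracted curves via the Jacobian, where the paper uses finiteness of $V\to U$ from \cref{lem:finitelocusoff}; your expectation holds, since the homogeneous determinant is $2d\,x^dy^3z^{2d-6}$.

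There are two small slips to fix. First, for $d=3$ the line $V_+(z)$ is not contracted: it maps onto itself by $(x:y:0)\mapsto(x^2:y^2:0)$, as the paper records, and $z$ drops out of the Jacobian. It should therefore be removed from your list, not sent to $(1:0:0)$; this does not affect the conclusion. Second, the absence of a common factor among the components follows from $I_f$ being finite, not from evaluating $f$ at the single point $(1:0:0)$.
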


\begin{proof}
    Note that for any point $P \in U(\QQ)$, $f^{-1}(P)$ consists of two $\QQ$-points.
    Thus, we have $d_2(f) = 2$.
    We show that $f$ is algebraically stable, i.e.\ $(f^n)^* = (f^*)^n$
    as endomorphisms on $\Pic \P^n_{\QQ}$ for all $n \geq 1$.
    This implies $d_1(f) = d$.
    To this end, it is enough to show that for any $n \geq 1$
    and any $f^n$-exceptional irreducible curve $C \subset \P^2_{\QQ}$,
    we have $\overline{f(C \setminus I_{f^n})} \notin I_f$.
    Here, a curve $C$ is $f^n$-exceptional means that 
    $\overline{f^n(C \setminus I_{f^n})}$ is a point. 
    
    By \cref{lem:finitelocusoff}, $f$ contracts at most three curves 
    $D=V_+(x), V_+(y), V_+(z)$ and their images $\overline{f(D \setminus I_f)}$ are
    \begin{align}
        {
        \renewcommand{\arraystretch}{2}
        \begin{array}{rcl}
            V_+(x) & \xlongrightarrow{\quad f \quad} & \{(-p^{-(d-3)}:0:1)\},\\
            V_+(y) & \xlongrightarrow{\quad f \quad} & \{(1:0:0)\},\\
            V_+(z) & \xlongrightarrow{\quad f \quad} &
            \begin{cases}
            \{(1:0:0)\} & \text{if } d\ge 4,\\
            V_+(z) & \text{if } d=3.
            \end{cases}
        \end{array}
        }
    \end{align}
    Moreover, we have 
    \begin{align}
        f(-p^{-(d-3)} : 0 : 1) = (1:0:0),\  f(1:0:0)=(1:0:0). \label{eq:imagesofcontractedpointsbyf}
    \end{align}
    Therefore, if $C \subset \P^2_{\QQ}$ is an $f^n$-exceptional curve,
    then there is some $0 \leq k \leq n$ such that $f^k(C \setminus I_{f^k})$
    is either $(1:0:0)$ or $(-p^{-(d-3)} : 0 : 1)$.
    By \cref{eq:imagesofcontractedpointsbyf}, $\overline{f(C \setminus I_{f^n})}$
    is either $(1:0:0)$ or $(-p^{-(d-3)} : 0 : 1)$.
    As these points are not contained in $I_f=\{(0:1:0), (0:0:1)\}$, the assertion is proved.
\end{proof}

\subsection{Backward orbit of fixed points}\label{subsec:backwardoffixedpt}

In this section, we assume $d \geq 4$.

\begin{lemma}\label{lem:backwardorbitpadicabs}
    Let $x_0 \in \QQ$ be such that $x_0^{d-2} - x_0 - p^{-(d-3)}=0$.
    Then, the point $P_0\coloneqq(x_0:x_0:1) \in \P^2(\QQ)$ is not in $I_f$, and $f(P_0)=P_0$ holds.
    For $n \geq 1$, we inductively choose $P_n \in (\P^2 \setminus I_f)(\QQ)$ 
    so that $f(P_n) = P_{n-1}$ if exists. 
    For $n \geq 0$, we have 
    \begin{itemize}
        \item[$(1)_n$] $P_n$ exists and $P_n \in \A^2(\QQ)$;
        \item[$(2)_n$] If we write $P_n = (x_n:y_n:1)$, then $|x_n|_p=|y_n|_p=p^{(d-3)/(d-2)}$,
        where $|\ |_p$ is any extension of the $p$-adic absolute value of $\Q$ to $\QQ$. 
        (Recall that we normalize $|\ |_p$ so that $|p|_p=p^{-1}$.)
    \end{itemize}
    In particular, $P_n \in U$ for all $n \geq 0$.
\end{lemma}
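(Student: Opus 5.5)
We argue by induction on $n$, working in the affine chart $z=1$, where $f(x,y)=(x^d/y^2-p^{-(d-3)},\,x)$.

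\emph{The fixed point.} For $P_0=(x_0:x_0:1)$ we compute
\begin{align}
f(x_0,x_0)=(x_0^{d-2}-p^{-(d-3)},\,x_0)=(x_0,x_0),
\end{align}
using the defining equation $x_0^{d-2}-x_0-p^{-(d-3)}=0$. We also have $x_0\neq 0$, since otherwise $p^{-(d-3)}=0$; hence $P_0\notin I_f=\{(0:1:0),(0:0:1)\}$.

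\emph{The base case $(2)_0$.} Consider the Newton polygon of $t^{d-2}-t-p^{-(d-3)}$ at $p$, with valuation normalized by $v(p)=1$. The vertices are $(0,-(d-3))$, $(1,0)$ and $(d-2,0)$. The segment from $(0,-(d-3))$ to $(d-2,0)$ has slope $(d-3)/(d-2)$, and the point $(1,0)$ lies on or above this segment, because at $i=1$ the segment has height $-(d-3)+(d-3)/(d-2)\le 0$. So there is a single segment, and every root satisfies $v(x_0)=-(d-3)/(d-2)$. That is, $|x_0|_p=p^{(d-3)/(d-2)}$, which gives $(2)_0$ for every extension of $|\ |_p$.

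\emph{The inductive step.} Suppose $(1)_{n-1}$ and $(2)_{n-1}$ hold, with $P_{n-1}=(x_{n-1}:y_{n-1}:1)$.

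First we produce a preimage. We need a point $(a,b)$ with $b^2\neq 0$ such that $a=y_{n-1}$ and
\begin{align}
b^2=\frac{a^d}{x_{n-1}+p^{-(d-3)}}.
\end{align}
This is possible once $x_{n-1}+p^{-(d-3)}\neq 0$ and $y_{n-1}\neq 0$. Both follow from $(2)_{n-1}$:
\begin{itemize}
\item $|y_{n-1}|_p\neq 0$, so $y_{n-1}\neq 0$;
\item $|x_{n-1}|_p=p^{(d-3)/(d-2)}<p^{d-3}=|p^{-(d-3)}|_p$, where we use $d\ge 4$, so $x_{n-1}\neq -p^{-(d-3)}$.
\end{itemize}
Equivalently, $P_{n-1}\in U$, and \cref{lem:finitelocusoff} (finiteness $V\to U$, and $U\subset f(\P^2\setminus I_f)$) also guarantees a preimage.

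Next we show that any chosen preimage lies in $\A^2$. By the proof of \cref{lem:finitelocusoff}, the images of $V_+(x),V_+(y),V_+(z)$ (away from $I_f$) are contained in $\{(1:0:0),(-p^{-(d-3)}:0:1)\}$, and these points are not in $U$. Since $P_{n-1}\in U$, any preimage lies in $V=D_+(xyz)$. In particular $P_n\in\A^2(\QQ)$ and $x_n,y_n\neq 0$, which is $(1)_n$.

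Now the valuations. We have $x_n=y_{n-1}$, so $|x_n|_p=p^{(d-3)/(d-2)}$. From $y_n^2(x_{n-1}+p^{-(d-3)})=x_n^d$ and the ultrametric inequality (strict, by the bullet above), we get $|x_{n-1}+p^{-(d-3)}|_p=p^{d-3}$. Therefore
\begin{align}
|y_n|_p^2=p^{d(d-3)/(d-2)}\,p^{-(d-3)}=p^{2(d-3)/(d-2)},
\end{align}
so $|y_n|_p=p^{(d-3)/(d-2)}$. This is $(2)_n$, and the conclusion $P_n\in U$ follows as in the inductive step.

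The one place where care is needed is the strict inequality $(d-3)/(d-2)<d-3$. This requires $d\ge 4$, and it is what keeps $x+p^{-(d-3)}$ $p$-adically dominated by its constant term; otherwise the argument is routine.
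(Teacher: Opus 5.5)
Your proof is correct and follows essentially the same route as the paper: induction on $n$, the observation that every preimage of a point of $U$ lies in $V$ (\cref{lem:finitelocusoff}), and the ultrametric computation $|y_n|_p^2=|x_n|_p^d/|x_{n-1}+p^{-(d-3)}|_p$. The only differences are cosmetic: you obtain $(2)_0$ from the Newton polygon of $t^{d-2}-t-p^{-(d-3)}$, whereas the paper rescales to $x_0'=px_0$, a root of ${x_0'}^{d-2}-p^{d-3}x_0'-p$, and argues via integrality; you also state explicitly the strict inequality $|x_{n-1}|_p<p^{d-3}$ (which needs $d\ge 4$), which the paper uses without comment.
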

\begin{proof}
    It is clear that $P_0 \notin I_f$.
    We also have
    \begin{align}
        f(P_0) = \bigg(  \frac{x_0^d}{x_0^2} - \frac{1}{p^{d-3}} : x_0 :1 \bigg)
        = (x_0:x_0:1).
    \end{align}
    We prove $(1)_n, (2)_n$ by induction on $n$.
    $(1)_0$ is clear.
    To see $(2)_0$, let us set $x_0' = px_0$.
    Then ${x_0'}^{d-2} - p^{d-3}x_0' - p=0$.
    Let $R$ be the ring of integers of $\Q(x_0')$.
    Then $x_0' \in pR$. Hence $|{x_0'}^{d-2}|_p = |p^{d-3}x_0' + p|_p = |p|_p.$
    Thus $|x_0^{d-2}|_p = |p|_p^{-(d-3)} = p^{d-3}$.
    Thus $|x_0|_p = p^{(d-3)/(d-2)}$.
    Since $y_0=x_0$, this proves $(2)_0$.

    Let $n \geq 0$ and suppose that $(1)_n$ and $(2)_n$ hold.
    By $(2)_n$, we have $P_n \in U$.
    Thus $P_{n+1}$ exists and contained in $V(\QQ)$.
    In particular, the point $P_{n+1}$ is in $\A^2(\QQ)$.
    Since 
    \begin{align}
        \frac{x_{n+1}^d}{y_{n+1}^2} - \frac{1}{p^{d-3}} = x_n, \ x_{n+1} = y_n,
    \end{align}
    we obviously have $|x_{n+1}|_p = |y_n|_p = p^{(d-3)/(d-2)}$.
    Also, we have
    \begin{align}
        |y_{n+1}|_p^2 = \frac{|x_{n+1}|_p^d}{|x_n + p^{-(d-3)}|_p}
        = \frac{|y_{n}|_p^d}{|x_n + p^{-(d-3)}|_p}
        = p^{d(d-3)/(d-2)} p^{-(d-3)} = p^{2(d-3)/(d-2)},
    \end{align}
    and thus $|y_{n+1}|_p = p^{(d-3)/(d-2)}$.
\end{proof}

\begin{corollary}\label{cor: sufficient condition for f-back}
    For $x_0 \in \QQ$ with  $x_0^{d-2} - x_0 - p^{-(d-3)}=0$,
    we have $(x_0:x_0:1) \in (\P^2)_f^{\rm back}$.
\end{corollary}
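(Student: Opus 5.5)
We need to show that for every $n\ge1$ there is a Zariski open neighbourhood of $P_0=(x_0:x_0:1)$ over which $f^n$ is finite. The plan is to use \cref{lem:finitelocusoff}, which says $f$ is finite over $U$ and maps $V$ finitely onto $U$, together with \cref{lem:backwardorbitpadicabs}, which controls every backward preimage of $P_0$.

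First I will show that the whole backward tree of $P_0$ avoids the bad locus. Fix any extension of $|\ |_p$ to $\QQ$. \Cref{lem:backwardorbitpadicabs} shows, for any choice of successive preimages, that each $P_k=(x_k:y_k:1)$ satisfies $|x_k|_p=|y_k|_p=p^{(d-3)/(d-2)}\neq 0$. It also gives $|x_k+p^{-(d-3)}|_p=p^{d-3}\neq0$, since $(d-3)/(d-2)<d-3$ for $d\ge4$. Hence every point $Q$ with $f^k(Q)=P_0$ and $Q\in(\P^2\setminus I_{f^k})(\QQ)$ lies in $U\cap V$. Here I will also note that any $\QQ$-point of $\Gamma_{f^k}$ mapping to $P_0$ lies over a point of $\P^2\setminus I_{f^k}$. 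This follows inductively: over $U$, the graph of $f$ has no points above $I_f$ (finiteness over $U$ in the sense of the definition), and all intermediate points lie in $U$.

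Next I will build the neighbourhood by induction on $n$. I set $W_0=U$ and take $W_n\subset W_{n-1}$ to be a Zariski open neighbourhood of $P_0$ such that $f^{-1}(W_{n-1}')\subset V$ and $f^{n}$ is finite over $W_n$. Concretely, suppose $f^{n-1}$ is finite over an open $W\ni P_0$, and suppose its preimage $(f^{n-1})^{-1}(W)$ (a finite set over $P_0$) is contained in $U$. The complement of $U$ pulled back under the finite map $(f^{n-1})^{-1}(W)\to W$ has closed image, and that image misses $P_0$ by the first step. Removing it gives $W'\ni P_0$ with $(f^{n-1})^{-1}(W')\subset U$. Over $U$, the map $f\colon f^{-1}(U)\to U$ is finite, with $f^{-1}(U)=V$ away from the graph's exceptional part. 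Composing two finite morphisms then shows $f^n$ is finite over $W'$, and the graph of $f^n$ over $W'$ avoids the preimage of $I_{f^n}$. I will then shrink further, using closedness of finite images, so that the level-$n$ preimages lie in $U$ for the next step. This mirrors \cite[Lemma B.4]{matsuzawa-note-ad-dls}.

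The main obstacle is bookkeeping with graphs versus honest morphisms. I must ensure that $\Gamma_{f^n}$ over $W'$ equals the iterated composition of restrictions $V\to U$, meaning no extra components appear over $I_f$. I will handle this by working directly with the morphisms $f|_V\colon V\to U$ and the open sets $f|_V^{-1}(\cdot)$. The definition's condition $g^{-1}(U)\cap\pi^{-1}(I_f)=\emptyset$ in \cref{lem:finitelocusoff} ensures the graph over these sets coincides with the domain of definition.
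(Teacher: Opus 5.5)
Your proposal is correct and follows the paper's route. The paper likewise uses \cref{lem:backwardorbitpadicabs} to see that every iterated preimage $f^{-n}(P_0)$ lies in $U$, then concludes $P_0\in(\P^2)_f^{\rm back}$, leaving to the basic properties in \cite[Appendix B]{matsuzawa-note-ad-dls} the inductive neighbourhood-shrinking that you spell out. The one step you assert without proof is that over $W'$ the graph $\Gamma_{f^n}$ coincides with the graph of the composed finite morphism; this holds because that graph is closed in $\P^2\times W'$ (finite maps are proper) and two-dimensional, hence equal to the irreducible two-dimensional open piece $\Gamma_{f^n}\cap(\P^2\times W')$.
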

\begin{proof}
    By \cref{lem:backwardorbitpadicabs}, $f^{-n}(x_0:x_0:1) \subset U$
    for all $n \geq 0$. This implies that $(x_0:x_0:1)$ is in $(\P^2)_f^{\rm back}$.
\end{proof}

\begin{lemma}\label{lem:preimageabsogrowth}
    Let $(a,b), (a',b') \in \A^2(\QQ)$ be such that
    \begin{align}
        &b, b'\neq 0\\
        &f(a',b') = \bigg( \frac{{a'}^d}{{b'}^2} - \frac{1}{p^{d-3}}, a'\bigg) = (a,b).
    \end{align}
    Let $|\ |$ be an arbitrary absolute value of $\QQ$ extending the archimedean absolute value of $\Q$.
    Let $t \in \R$ and suppose either 
    \begin{align}
        \text{$d\geq 5$ and $t \geq 1$, or
        $d=4$ and $t \geq \frac{1+\sqrt{3}}{2}$}.
    \end{align}
    If
    \begin{align}
        \frac{1}{1+\frac{1}{2t}}|b| \geq  |a| \geq t,
    \end{align}
    then
    \begin{align}
        \frac{1}{1+\frac{1}{2t}}|b'| \geq  |a'| \geq  t,\ \text{and}\quad  |b'| \geq  |b|^{(d-1)/2}.
    \end{align}
\end{lemma}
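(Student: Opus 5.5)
The plan is a direct estimate from the defining relations, with no earlier result needed. Write $c \coloneqq p^{-(d-3)}$. Since $d \geq 4$ and $p \geq 2$, we have $0 < c \leq \frac{1}{2}$. The hypothesis $f(a',b') = (a,b)$ gives
\begin{align}
  a' = b, \qquad {b'}^2 = \frac{{a'}^d}{a + c} = \frac{b^d}{a+c}.
\end{align}
Here $a + c \neq 0$, because otherwise the first coordinate of $f(a',b')$ could not equal $a$. I would prove the three conclusions in the order $|a'| \geq t$, then $|b'| \geq |b|^{(d-1)/2}$, then the ratio inequality, since the last one uses the second.

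\emph{Step 1.} Since $|a'| = |b| \geq (1+\frac{1}{2t})|a| \geq (1+\frac{1}{2t})t = t + \frac{1}{2}$, we get $|a'| \geq t$. We also record $|b| \geq t + \frac12$ for later use.

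\emph{Step 2.} By the triangle inequality, $|a + c| \leq |a| + c$. Using $c \leq \frac12$ and $|a| \geq t$ we get $c \leq \frac{|a|}{2t}$, hence $|a| + c \leq (1 + \frac{1}{2t})|a| \leq |b|$. Therefore
\begin{align}
  |b'|^2 = \frac{|b|^d}{|a+c|} \geq |b|^{d-1}, \quad\text{that is,}\quad |b'| \geq |b|^{(d-1)/2}.
\end{align}

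\emph{Step 3.} The remaining claim is $|b'| \geq (1+\frac{1}{2t})|a'| = (1+\frac{1}{2t})|b|$. By Step 2 it suffices to prove $|b|^{(d-3)/2} \geq 1 + \frac{1}{2t}$, and we know $|b| \geq t + \frac12 > 1$.
\begin{itemize}
  \item[(i)] For $d \geq 5$ we have $(d-3)/2 \geq 1$, so $|b|^{(d-3)/2} \geq |b| \geq t + \frac12$. Moreover
  \begin{align}
    t + \tfrac12 - \big(1 + \tfrac{1}{2t}\big) = (t-1)\big(1 + \tfrac{1}{2t}\big) \geq 0
  \end{align}
  for $t \geq 1$, which gives the claim.
  \item[(ii)] For $d = 4$ we need $\sqrt{|b|} \geq 1 + \frac{1}{2t}$. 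It is enough to have $t + \frac12 \geq (1+\frac{1}{2t})^2$. Multiplying by $4t^2$, this becomes $4t^3 - 2t^2 - 4t - 1 \geq 0$. The left-hand side factors as
  \begin{align}
    (2t+1)(2t^2 - 2t - 1),
  \end{align}
  and it is nonnegative exactly when $t \geq \frac{1+\sqrt3}{2}$.
\end{itemize}

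The only delicate point is the case $d = 4$ of Step 3. There the threshold $t \geq \frac{1+\sqrt{3}}{2}$ is sharp for this argument, and it comes from the factorization above. This is also why $P_0 = (\frac{1+\sqrt3}{2} : \frac{1+\sqrt3}{2} : 1)$ appears in \cref{thm:exampleofunbddpreper}. The rest is routine bookkeeping with the triangle inequality and the bound $c \leq \frac12$.
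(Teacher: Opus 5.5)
Your proof is correct and is essentially the paper's argument: $|a'|=|b|\geq (1+\frac{1}{2t})t$, then $|a+p^{-(d-3)}|\leq |a|+\frac12\leq (1+\frac{1}{2t})|a|\leq |b|$ gives $|b'|\geq |b|^{(d-1)/2}$, and the ratio bound reduces to $|b|^{(d-3)/2}\geq 1+\frac{1}{2t}$; your cubic factorization for $d=4$ is just the paper's condition $t\geq 1+\frac{1}{2t}$ multiplied out. The only nit is that $a+p^{-(d-3)}\neq 0$ is better justified by $|a|\geq t>\frac12\geq p^{-(d-3)}$ (or by $a'=b\neq 0$) than by the reason you give.
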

\begin{proof}
    First note that $|a'| = |b| \geq  (1+\frac{1}{2t})t > t$.
    Next, we have
    \begin{align}
        |b'|^2 = \bigg| \frac{b^d}{a + p^{-(d-3)}} \bigg| \geq \frac{|b|^d}{|a| + 1/2}
        \geq  \frac{|b|^d}{(1 + \frac{1}{2t})|a|} \geq |b|^{d-1}.
    \end{align}
    Thus, we get
    \begin{align}\label{eq: inductionstep}
        |b'| \geq  |b|^{(d-1)/2} = |a'|^{(d-1)/2} = |a'| |a'|^{(d-3)/2} 
        \geq |a'| \big((1+\tfrac{1}{2t})t\big)^{(d-3)/2}.
    \end{align}
    If $d \geq 5$ and $t \geq 1$, we have $((1+\frac{1}{2t})t)^{(d-3)/2} \geq 1+\frac{1}{2t}$.
    If $d = 4$ (and $t > 0$), the inequality $((1+\frac{1}{2t})t)^{(d-3)/2} \geq 1+\frac{1}{2t}$ is equivalent to
    $t \geq 1 + \frac{1}{2t}$.
    This holds if $t \geq (1 + \sqrt{3})/2$.
    Hence, the inequality \cref{eq: inductionstep} implies our assertion.
\end{proof}

\begin{proposition}\label{prop:unbddpreperrecipe}
    Let $x_0 \in \QQ$ be such that $x_0^{d-2} - x_0 - p^{-(d-3)}=0$.
    Then by \cref{lem:backwardorbitpadicabs}, there is a sequence of points
    $P_n \in U(\QQ)$ $(n \geq 0)$ such that
    \begin{align}
        P_0=(x_0,x_0), \quad
        f(P_n) = P_{n-1} \quad \text{for $n \geq 1$}.
    \end{align}
    Let us write $P_n=(x_n,y_n)$.
    Let $t$ be a real number and suppose either 
    \begin{align}
        \text{$d\geq 5$ and $t \geq 1$, or
        $d=4$ and $t \geq \frac{1+\sqrt{3}}{2}$}.
    \end{align}
    If there is $n \geq 0$ and an archimedean absolute value $|\ |$ on $\QQ$ extending the archimedean absolute value of $\Q$ such that
    \begin{align}
        \frac{1}{1+\frac{1}{2t}}|y_n| \geq |x_n| \geq t, \label{ineq:initialcond}
    \end{align}
    then we have
    \begin{align}
        \liminf_{n \to \infty}h(P_n)^{1/n} \geq \frac{d-1}{2},
    \end{align}
    and $\{P_n \mid n \geq 0\}$ is Zariski dense in $\P^2$.
    Here, recall that $h$ denotes the naive height function on $\P^2(\QQ)$.
\end{proposition}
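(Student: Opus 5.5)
Starting from the index $n_0$ at which \cref{ineq:initialcond} holds, I will apply \cref{lem:preimageabsogrowth} repeatedly. For each $k \geq n_0$ we have $f(P_{k+1})=P_k$ with $y_{k+1}\neq 0$ and $y_k \neq 0$, since $P_k \in U(\QQ)$, and in fact $P_{k+1}\in V(\QQ)$ by \cref{lem:finitelocusoff}. So the lemma applies with $(a,b)=(x_k,y_k)$ and $(a',b')=(x_{k+1},y_{k+1})$, with the same $t$ and the same archimedean absolute value $|\ |$. Induction shows that the hypothesis \cref{ineq:initialcond} propagates to every $k \geq n_0$, and that $|y_{k+1}| \geq |y_k|^{(d-1)/2}$. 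Iterating gives
\begin{align}
|y_{n_0+j}| \geq |y_{n_0}|^{((d-1)/2)^j}, \qquad |y_{n_0}| \geq (1+\tfrac{1}{2t})t > 1 .
\end{align}

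To pass to heights, I fix a number field $K_n$ containing $P_n$ and note that the embedding inducing $|\ |$ is one archimedean place of $K_n$. Its local contribution to $h(P_n)$ is at least $\frac{[K_{n,v}:\R]}{[K_n:\Q]}\log|y_n|$. This is the main obstacle: the local degree weight $1/[K_n:\Q]$ may shrink as $n$ grows, since the fields generated by backward orbits have unbounded degree (cf.\ \cref{prop:backorbitdeffldunbdd}). To get around this, I will use Galois conjugation. Every Galois conjugate $\sigma(P_n)$ is again a point of a backward orbit of $P_0$, because $P_0$ is fixed by $f$ only up to conjugates of $x_0$. The clean way to handle this is to apply the argument at every archimedean place of $K_n$, not only at $|\ |$. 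The issue is that \cref{ineq:initialcond} is only assumed for one place.

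My first approach is therefore to use the local height decomposition only at that place, but restrict to the field $K_n = \Q(P_n)$ and control the degree. Since $f$ is finite of degree $2$ over $U$, we have $[\Q(P_{n}):\Q(P_{n-1})] \leq 2$, so $[K_n:\Q] \leq 2^n [\Q(x_0):\Q]$. This gives
\begin{align}
h(P_n) \geq \frac{1}{2^n[\Q(x_0):\Q]}\Big(\frac{d-1}{2}\Big)^{n-n_0}\log|y_{n_0}| .
\end{align}
Taking $n$-th roots yields only $\liminf h(P_n)^{1/n} \geq (d-1)/4$, which falls short of the claim. To obtain $(d-1)/2$, I intend to combine the archimedean growth with the $p$-adic control of \cref{lem:backwardorbitpadicabs}, or alternatively use the recursion $|y_{k+1}|^2 = |y_k|^d/|x_k+p^{-(d-3)}|$. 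The latter holds for every conjugate, and it should show that the average over all places of $\log^+|y_n|$ grows at rate $(d-1)/2$. If that fails, I will at least secure $h(P_n)\to\infty$, which is what \cref{thm:exampleofunbddpreper} needs.

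For Zariski density, I will argue by contradiction. Suppose the Zariski closure $Z$ of $\{P_n\}$ is a proper subset. Since the heights are unbounded, $Z$ contains a curve $C$ meeting infinitely many $P_n$. Because $f(P_{n})=P_{n-1}$, the set $Z$ satisfies $f(Z\cap V) \subset Z$. The curve components containing infinitely many $P_n$ are then permuted by $f$, so some $f^k$ fixes a curve $C$, and the $P_n$ on $C$ form a backward orbit under $f^k|_C$. On $C$ the growth $\log|y_n| \asymp ((d-1)/2)^n$ must be compatible with the degree of the induced map $f^k|_C$. For a rational self-map of a curve of degree $e$, backward orbits have height growing at most like $h \leq \text{const}$, or they are height bounded, as for a map of degree $\geq 2$, whose canonical height satisfies $\hat h(P_n)=e^{-n}\hat h(P_0)$. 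This contradicts the unbounded growth. For $e=1$ (automorphisms of genus $0$ or $1$ curves), heights along orbits grow at most linearly or quadratically, which again contradicts double-exponential growth. Hence $Z=\P^2$.
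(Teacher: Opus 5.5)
\textbf{Gap in the height estimate.} The gap sits exactly at what you call the main obstacle. You say \cref{ineq:initialcond} is assumed at only one place. But it is a condition on $(x_{n_0},y_{n_0})\in K_{n_0}^2$, where $K_{n_0}=\Q(x_{n_0},y_{n_0})$. So it holds for \emph{every} absolute value on $\QQ$ whose restriction to $K_{n_0}$ is the place $v$ induced by $|\ |$.

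Run your induction with \cref{lem:preimageabsogrowth} for each such absolute value. This gives $|y_m|_w\ge |y_{n_0}|_v^{((d-1)/2)^{m-n_0}}$ for \emph{all} places $w$ of $K_m$ lying over $v$, not just one. Keep only these places in the height and use $\sum_{w\mid v}[(K_m)_w:(K_{n_0})_v]=[K_m:K_{n_0}]$. Then the growing degree $[K_m:\Q]$ cancels exactly:
\[
h(P_m)\ \ge\ \sum_{w\mid v}\frac{[(K_m)_w:\R]}{[K_m:\Q]}\log|y_m|_w\ \ge\ \frac{[(K_{n_0})_v:\R]}{[K_{n_0}:\Q]}\Big(\frac{d-1}{2}\Big)^{m-n_0}\log\big((1+\tfrac{1}{2t})t\big).
\]
The constant in front is positive and independent of $m$. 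This is the paper's argument, and it gives $\liminf h(P_m)^{1/m}\ge (d-1)/2$.

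Without this observation your argument does not close. The bound you actually prove, $h(P_n)\gg 2^{-n}((d-1)/2)^{n}$, has rate $(d-1)/4$. That is $3/4$ for $d=4$ and $1$ for $d=5$. So it does not even give $h(P_n)\to\infty$ for $d=4$, $p=2$, which is the case needed in \cref{thm:exampleofunbddpreper}.

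Your suggested alternatives would not rescue it. By \cref{lem:backwardorbitpadicabs}, the $p$-adic sizes $|x_n|_p=|y_n|_p$ are constant, so they contribute only a bounded amount. At archimedean places of $K_{n_0}$ other than $v$, you have no hypothesis from which to start the induction.

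\textbf{Consequence for Zariski density.} Your density argument gets its contradiction from height growth: unboundedness when $e\ge 2$, fast growth when $e=1$. So it is incomplete for the same reason. Otherwise its structure matches the paper's: an $f^r$-invariant curve, the induced morphism $g$ on the normalization $\nu\colon\widetilde C\to C$, and bounded height of preperiodic points of a polarized $g$.

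Once unbounded growth (hence infinitude of $\{P_{nr}\}$) is established, the case $e=1$ needs no growth rates. The morphism $g$ maps the finite set $\nu^{-1}(P_0)$ into itself. If $g$ were an automorphism, all iterated preimages of this set would equal it. That contradicts $\nu^{-1}(\{P_{nr}\})$ being infinite.
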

\begin{proof}
    Let $K_n = \Q(x_n,y_n)$ be the field generated by $x_n,y_n$ over $\Q$.
    Since $f(x_{n+1}:y_{n+1}:1)=(x_n:y_n:1)$, we have a tower of field extensions $K_0 \subset K_1 \subset K_2 \subset \cdots$.
    Let us write $|\ |_v = |\ ||_{K_n}$, the restriction of $|\ |$ on $K_n$.
    Then, by \cref{lem:preimageabsogrowth}, for any $m > n$ and any absolute value
    $|\ |_w$ on $K_m$ extending $|\ |_v$, we have
    \begin{align}
        |y_m|_w \geq |y_n|_v^{((d-1)/2)^{m-n}}.
    \end{align}
    Thus, we get the following inequalities
    \begin{align}
        h(P_m) 
        &\geq \sum_{\substack{w \in M_{K_m} \\ w \mid v}} 
        \frac{[(K_m)_w : \R]}{[K_m: \Q]} \log|y_m|_w\\
        &\geq \sum_{\substack{w \in M_{K_m} \\ w \mid v}} 
        \frac{[(K_m)_w : (K_n)_v][(K_n)_v:\R]}{[K_m: K_n][K_n:\Q]}
        \bigg(\frac{d-1}{2} \bigg)^{m-n}\log |y_n| \\
        &= \frac{[(K_n)_v:\R]}{[K_n:\Q]}\bigg(\frac{d-1}{2} \bigg)^{m-n}\log |y_n|\\
        & \geq \frac{[(K_n)_v:\R]}{[K_n:\Q]}\bigg(\frac{d-1}{2} \bigg)^{m-n}\log(1+1/2t)t.
    \end{align}
    Hence, we obtain the inequality
    \begin{align}
        \liminf_{n \to \infty}h(P_n)^{1/n} \geq \frac{d-1}{2} (> 1). \label{eq:lowerboundofhtalongback}
    \end{align}

    Next, we prove that the set $S = \{P_n \mid n \geq 0\}$ is Zariski dense in $\P^2$.
    Let $Z \subset \P^2$ be the Zariski closure of $S$.
    Suppose $Z \subsetneq \P^2$.
    Since $S$ is an infinite set, we have $\dim Z=1$.
    Note that $S \subset \P^2_f(\QQ)$ and $f(S)= S$, and hence
    $f(Z \setminus I_f)$ is a dense subset of $Z$.
    Let $Z_0 \subset Z$ be a $1$-dimensional irreducible component.
    Since $Z_0 \cap \P^2_f(\QQ) \cap S$ is infinite, 
    the closed subset $\overline{f^i(Z_0 \cap \P^2_f(\QQ) )}$ is $1$-dimensional irreducible component of $Z$
    for all $i \geq 0$. Moreover, any point in $S$ is contained in $f^i(Z_0 \cap \P^2_f(\QQ))$ for some $i$.
    Thus, every irreducible component of $Z$ is 
    of the form $\overline{f^i(Z_0 \cap \P^2_f(\QQ) )}$. Since $f(Z \setminus I_f)$
    is dense in $Z$, generic points of $Z$ form a periodic cycle, say period $r$, under $f$.
    In particular, the closed subset $Z$ is pure dimension one.
    Consider the infinite set $\{P_{nr} \mid n \geq 0\}$.
    Let $C \subset Z$ be an irreducible component containing infinitely many $P_{nr}$.
    Since $f^r(C \cap \P^2_f(\QQ)) \subset C$, we have $P_{nr} \in C$ for all $n \geq 0$.
    Consider the following diagram
    \begin{equation}
        \begin{tikzcd}
            \widetilde{C} \arrow[r, "g"] \arrow[d, "\nu",swap] & \widetilde{C} \arrow[d, "\nu"]\\
            C \arrow[r,dashed , "f^r"]& C\\
            P_{(n+1)r} \arrow[r,|->] \arrow[u,phantom,"\in", sloped] & P_{nr} \arrow[u,phantom,"\in", sloped] 
        \end{tikzcd}
    \end{equation}
    where $\nu$ is the normalization of $C$ and $g$ is the induced morphism.
    Since $g(\nu^{-1}(P_{(n+1)r})) \subset \nu^{-1}(P_{nr})$,
    \begin{align}
        \nu^{-1}(\{P_{nr} \mid n \geq 0\}) \subset \bigcup_{n \geq 0}g^{-n}(\nu^{-1}(P_0)).
    \end{align}
    As $g(\nu^{-1}(P_0)) \subset \nu^{-1}(P_0)$, the right-hand side is the iterated preimages of finitely many preperiodic points of $g$.
    Since this set is infinite, the morphism $g$ is not an automorphism.
    As $\widetilde{C}$ is an irreducible curve, the morphism $g$ is a polarized endomorphism, and hence the set of preperiodic points in $\widetilde{C}(\QQ)$ is a set of bounded height.
    This implies that $h \circ \nu$ is bounded on $\nu^{-1}(\{P_{nr} \mid n \geq 0\})$,
    but this contradicts \cref{eq:lowerboundofhtalongback}.

\end{proof}

\begin{proof}[Proof of \cref{thm:exampleofunbddpreper}(\cref{mainexunbddpreper})]
    We check the assumptions of \cref{prop:unbddpreperrecipe} for 
    a certain choice of $x_0$ and $n$ when $d = 4$ and $p = 2$.
    We fix an embedding $\QQ \subset \C$ and let $|\ |$ be the usual absolute value of $\C$.
    The equation in \cref{cor: sufficient condition for f-back} for our situation is $x^2 - x - 1/2 = 0$.
    We choose the larger root $x_0 = (1 + \sqrt{3})/2$.
    We can check that
    \begin{equation}
        \begin{tikzcd}
            (x_0,x_0) & \arrow[l,"f", maps to] (x_0,-x_0) & \arrow[l,"f", maps to] (-x_0,x_0) 
            & \arrow[l,"f", maps to] \bigg(x_0, \left( \frac{x_0^4}{1/2 - x_0} \right)^{1/2} \bigg) =(x_3, y_3)
        \end{tikzcd}
    \end{equation}
    where we choose one of the square roots at the right-most point.
    The inequality \cref{ineq:initialcond} for $|\ |$, $t = (1+\sqrt{3})/2 (=x_0)$, and $(x_3,y_3)$ is equivalent to $|x_3| \geq t$ and
    \begin{align}
        \frac{x_0^4}{x_0 - 1/2} \geq (1 + 1/2t)^2x_0^2,
    \end{align}
    or
    \begin{align}
        \sqrt{\frac{2}{\sqrt{3}}} x_0^2 \geq x_0 + \frac{1}{2} (= x_0^2).
    \end{align}
    Since this is true, our assertion follows from \cref{prop:unbddpreperrecipe}.
\end{proof}

We remark that the backward orbit of $(x_0,x_0)$ constructed in the
proof of \cref{thm:exampleofunbddpreper}(\cref{mainexunbddpreper})
cannot be defined over number fields of bounded degree.
More precisely, let $d = 4, p=2$ so that
\begin{align}
    f \colon \A^2_{\QQ} \dashrightarrow \A^2_{\QQ}, 
    (x,y) \mapsto \bigg(\frac{x^4}{y^2} - \frac{1}{2}, x \bigg).
\end{align}
Let $x_0 = (1+\sqrt{3})/2$ and $P_0 = (x_0,x_0)$.
As in the proof of \cref{thm:exampleofunbddpreper}(\cref{mainexunbddpreper}), we choose
\[
    P_3 = (x_3, y_3) \coloneqq \left(x_0, \left( \frac{x_0^4}{1/2 - x_0} \right)^{1/2} \right)
\]
so that $f^3(P_3) = P_0$,
and pick any $P_n = (x_n,y_n)$ such that $f(P_n) = P_{n-1}$ for $n \geq 4$.
Then, the following proposition holds.
\begin{proposition}\label{prop:backorbitdeffldunbdd}
    We have $[\Q(x_n, y_n) : \Q] \to \infty$ as $n \to \infty$.
\end{proposition}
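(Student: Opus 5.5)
The plan is to argue by contradiction, working 2-adically (here $d=4$, $p=2$). Suppose $[\Q(x_n,y_n):\Q]$ does not tend to infinity. Since $f(P_{n+1})=P_n$, the fields $K_n=\Q(x_n,y_n)$ form an increasing tower $K_0\subset K_1\subset\cdots$, as in the proof of \cref{prop:unbddpreperrecipe}. So if the degrees are bounded along a subsequence, they are bounded outright and the tower stabilizes: there is a number field $K$ with $P_n\in K^2$ for all $n\ge N$. I then want to show that the $2$-adic ramification of $K_n$ must grow, which is impossible for a fixed $K$.

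First I will rewrite the backward recursion. From $f(x_{n+1},y_{n+1})=(x_n,y_n)$ we get $x_{n+1}=y_n$ and
\begin{align}
y_{n+1}^2=\frac{y_n^4}{y_{n-1}+\frac12}.
\end{align}
Hence $y_{n+1}\in K$ forces $w_{n-1}\coloneqq y_{n-1}+\tfrac12$ to be a square in $K$ for all large $n$. Fix a place $v$ of $K$ over $2$ and normalize it so that $v(2)=1$. By \cref{lem:backwardorbitpadicabs} (with $d=4$, $p=2$) we have $v(y_n)=-1/2$ for every $n$. So $w_n=\tfrac12(1+2y_n)$ with $v(2y_n)=1/2$, and therefore $1+2y_n=1+z_n$ with $v(z_n)=1/2$. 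Being a square in $K_v$ then forces $\sqrt2\in K_v$ (already implied by the $v(y_n)=-1/2$ ramification) and also forces the principal unit $1+z_n$ to be a square in $K_v$.

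The key step is to exploit the standard fact that a principal unit $1+z$ with $0<v(z)<2$ is a square in $K_v$ only if $z\equiv 2a+a^2$ for some $a$ with $v(a)=v(z)/2$. This forces elements of ever smaller positive valuation to appear. Concretely, I will set $\sqrt{1+z_n}=1+a_n$ with $v(a_n)=1/4$. I then intend to show by induction, using the recursion $y_{n+1}=\pm y_n^2/s_{n-1}$ with $s_{n-1}=\sqrt{w_{n-1}}$, that the square-root conditions at successive steps produce elements of $2$-adic valuation $1/2^k$ in $K_{n+k}$. This would show that the ramification index of $K_{n+k}$ at primes over $2$ is at least $2^k$, contradicting the stabilization. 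The induction must track the leading $2$-adic terms of $y_{n+1}$, $s_{n-1}$ and $1+2y_{n+1}$ through the recursion, and check that the new unit $1+z_{n+1}$ again has $v(z_{n+1})$ halved rather than landing back at a valuation already achieved in the field.

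I expect exactly this tracking to be the main obstacle. Cancellation in $y_n^2/s_{n-1}$ might make $z_{n+1}$ have a valuation already realized, in which case being a square imposes no new ramification. If it fails, the fallback is an archimedean and height argument: over a fixed $K$, the relation $y_{n+1}^2(y_{n-1}+\tfrac12)=y_n^4$ together with the growth $h(P_n)\ge c(3/2)^n$ from \cref{prop:unbddpreperrecipe}. I would then use an $S$-integrality or Siegel-type finiteness statement for the points $(s_{n-1},y_n)$ on the associated curves to rule out infinitely many $K$-rational solutions.
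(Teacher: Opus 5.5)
\textbf{There is a genuine gap.} Reducing to a single number field $K$ is fine, but the argument stops exactly where the content lies, and $2$ is a poor prime to use. By \cref{lem:backwardorbitpadicabs} the valuations there are frozen: $v(x_n)=v(y_n)=-1/2$ for all $n$. So value groups give you nothing beyond the one forced element of valuation $1/4$ (ramification index $\geq 4$), and a fixed $K$ can contain that.

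To get more, you would have to prove that $K_n(\sqrt{x_n+1/2})$ is again ramified above $2$ at every step. That means controlling the square class of the principal unit $1+2y_n$ (a question about its expansion up to precision $4$) in an increasingly wildly ramified field. It must also hold for every choice of square-root branch, since the statement concerns an arbitrary backward orbit past $P_3$. The proposal gives no invariant that makes this induction work, and you concede that cancellation may defeat it.

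The fallback does not work either. The $P_n$ are Zariski dense in $\P^2$ (\cref{prop:unbddpreperrecipe}), so no curve contains infinitely many of them, and a Siegel-type theorem has nothing to act on. Height growth $h(P_n)\gg(3/2)^n$ is perfectly compatible with all $P_n$ lying in one number field. A minor error as well: $v(y_n)=-1/2$ only shows the local ramification index is even, not that $\sqrt2\in K_v$; $\Q_2(\sqrt3)$ is a counterexample. You would need to adjoin $\sqrt2$ to $K$ first.

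The missing idea is to work at a prime where the valuations along the orbit actually move, so that the value group itself must grow. Take a place above $3$ with $v(3)=1$.
\begin{itemize}
\item Since $v(x_0)=v(x_0+\tfrac12)=0$ and $v(\tfrac12-x_0)=v(-\sqrt3/2)=\tfrac12$, you get $v(y_3)=-\tfrac14$, hence $v(x_4)=-\tfrac14$ and $v(y_4)=-\tfrac12$.
\item From $x_{n+1}=y_n$ and $y_{n+1}^2=y_n^4/(x_n+\tfrac12)$, as long as $v(x_n)<0$ you have $(v(x_{n+1}),v(y_{n+1}))^{T}=M\,(v(x_n),v(y_n))^{T}$ with $M=\begin{pmatrix}0&1\\-1/2&2\end{pmatrix}$.
\item Negativity persists because the cone $v(y_n)\le(1-1/\sqrt2)\,v(x_n)$ is preserved by $M$.
\item The diagonal entries of $M^{2m}$ have $2$-adic valuation exactly $-m$ and the off-diagonal entries have larger valuation. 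This gives $v_2\bigl(v(x_{2m+4})\bigr)=-m-2$.
\end{itemize}
So the ramification index of $K_n$ above $3$ is unbounded, regardless of the branch choices. This is the paper's proof: a linear recursion on valuations replaces the delicate wild principal-unit analysis your $2$-adic route would require.
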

\begin{proof}
    Let us write $K_n = \Q(x_n, y_n)$.
    Note that we have $K_0 \subset K_1 \subset \cdots$.
    We prove that the value groups at primes over $3$ extend infinitely many times.
    Let us fix an absolute value $|\ |_3$ on $\QQ$ that is an extension
    of the $3$-adic absolute value of $\Q$.
    Then set $v(\cdot) = - \log |\cdot|_3$.
    This is a valuation on $\QQ$ such that $v(3)=1$.

    We first observe that
    \begin{align}
        v(x_4) = v(y_3) = -\frac{1}{4},\  v(y_4) = 2v(y_3) = -\frac{1}{2}. \label{valofP4}
    \end{align}
    For $n \geq 4$, recalling the formula
    \begin{align}
        (x_{n+1}, y_{n+1}) = \bigg( y_n, \frac{y_n^2}{(x_n + 1/2)^{1/2}} \bigg),
    \end{align}
    we have
    \begin{align}
        v(x_{n+1}) = v(y_n),\ v(y_{n+1}) = 2v(y_n) - \frac{1}{2} v(x_n + 1/2).
    \end{align}
    As $v(1/2)=0$, if $v(x_n) < 0$, we have 
    \begin{align}
        v(y_{n+1}) = 2v(y_n) - \frac{1}{2} v(x_n)
    \end{align}
    so that
    \begin{align}
        \begin{pmatrix} v(x_{n+1}) \\ v(y_{n+1})\end{pmatrix}
        =
        \begin{pmatrix}
            0 & 1 \\
            -1/2 & 2
        \end{pmatrix}
        \begin{pmatrix}
            v(x_n) \\ v(y_n)
        \end{pmatrix}.
    \end{align}
    To repeat this argument, we need $v(x_{n+1}) < 0$.
    To this end, we note that $v(x_n) < 0$ and $v(y_n) \leq (1-1/\sqrt{2})v(x_n)$
    implies $v(x_{n+1}) < 0$ and $v(y_{n+1}) \leq (1-1/\sqrt{2})v(x_{n+1})$.
    Indeed, $v(x_{n+1}) = v(y_n) \leq (1-1/\sqrt{2})v(x_n) < 0$.
    Also,
    \begin{align}
        v(y_{n+1}) = 2 v(y_n) - \frac{1}{2} v(x_n) \leq 
        \bigg( 2 - \frac{1}{2 - \sqrt{2}}  \bigg) v(y_n)
        = (1 - 1 / \sqrt{2}) v(x_{n+1}).
    \end{align}
    (To get the number $1 - 1/\sqrt{2}$, diagonalize the coefficient matrix.)
    By \cref{valofP4}, the inequalities $v(x_n) < 0$ and $v(y_n) \leq (1-1/\sqrt{2})v(x_n)$ are satisfied when $n=4$.
    Thus, we get
    \begin{align}
        \begin{pmatrix} v(x_{n}) \\ v(y_{n})\end{pmatrix}
        =
        \begin{pmatrix}
            0 & 1 \\
            -1/2 & 2
        \end{pmatrix}^{n-4}
        \begin{pmatrix}
            v(x_4) \\ v(y_4)
        \end{pmatrix}
        =
        \begin{pmatrix}
            0 & 1 \\
            -1/2 & 2
        \end{pmatrix}^{n-4}
        \begin{pmatrix}
            -1/4 \\ -1/2
        \end{pmatrix}
        \label{valofxnyn}
    \end{align}
    for $n \geq 4$.
    Then 
    \begin{align}
        \begin{pmatrix}
            0 & 1 \\
            -1/2 & 2
        \end{pmatrix}^2
        =
        \begin{pmatrix}
            -1/2 & 2 \\
            -1 & 7/2
        \end{pmatrix}
    \end{align}
    and if we write 
    \begin{align}
        \begin{pmatrix}
            0 & 1 \\
            -1/2 & 2
        \end{pmatrix}^{2m}
        =
        \begin{pmatrix}
            a_m & b_m \\
            c_m & d_m
        \end{pmatrix}
    \end{align}
    for $m \geq 1$,
    then the entries $a_m, b_m, c_m, d_m$ lie in $\Z[1/2]$, 
    and induction on $m$ shows
    \begin{align}
        v_2(a_m) = v_2(d_m) = -m,\ v_2(b_m), v_2(c_m) > -m
    \end{align}
    where $v_2$ is the $2$-adic valuation.
    By \cref{valofxnyn}, we get
    \begin{align}
        v_2(v(x_{2m+4})) = -m - 2
    \end{align}
    for all $m \geq 1$.
    This equality implies the inclusion $v(K_{2m+4}^{\times}) \supset \frac{1}{2^{m+2}} \Z$.
    Hence, our assertion follows from the fundamental equality of the extension degree.
\end{proof}

\bibliographystyle{acm}
\bibliography{periodic_coh_hyp}

@article {Mar03,
    AUTHOR = {Marcello, Sandra},
     TITLE = {Sur la dynamique arithm\'etique des automorphismes de l'espace
              affine},
   JOURNAL = {Bull. Soc. Math. France},
  FJOURNAL = {Bulletin de la Soci\'et\'e{} Math\'ematique de France},
    VOLUME = {131},
      YEAR = {2003},
    NUMBER = {2},
     PAGES = {229--257},
      ISSN = {0037-9484,2102-622X},
   MRCLASS = {11G50 (32M17 37F10)},
  MRNUMBER = {1988948},
MRREVIEWER = {Yuri\ Tschinkel},
       DOI = {10.24033/bsmf.2441},
       URL = {https://doi.org/10.24033/bsmf.2441},
}

@article {Mar00,
    AUTHOR = {Marcello, Sandra},
     TITLE = {Sur les propri\'et\'es arithm\'etiques des it\'er\'es
              d'automorphismes r\'eguliers},
   JOURNAL = {C. R. Acad. Sci. Paris S\'er. I Math.},
  FJOURNAL = {Comptes Rendus de l'Acad\'emie des Sciences. S\'erie I.
              Math\'ematique},
    VOLUME = {331},
      YEAR = {2000},
    NUMBER = {1},
     PAGES = {11--16},
      ISSN = {0764-4442},
   MRCLASS = {11G50},
  MRNUMBER = {1780177},
MRREVIEWER = {Laurent\ Denis},
       DOI = {10.1016/S0764-4442(00)00325-6},
       URL = {https://doi.org/10.1016/S0764-4442(00)00325-6},
}

@article {Den95,
    AUTHOR = {Denis, Laurent},
     TITLE = {Points p\'eriodiques des automorphismes affines},
   JOURNAL = {J. Reine Angew. Math.},
  FJOURNAL = {Journal f\"ur die Reine und Angewandte Mathematik. [Crelle's
              Journal]},
    VOLUME = {467},
      YEAR = {1995},
     PAGES = {157--167},
      ISSN = {0075-4102,1435-5345},
   MRCLASS = {14E05 (11G99)},
  MRNUMBER = {1355926},
MRREVIEWER = {Franz\ Halter-Koch},
       DOI = {10.1515/crll.1995.467.157},
       URL = {https://doi.org/10.1515/crll.1995.467.157},
}

@article {Sil94,
    AUTHOR = {Silverman, Joseph H.},
     TITLE = {Geometric and arithmetic properties of the {H}\'enon map},
   JOURNAL = {Math. Z.},
  FJOURNAL = {Mathematische Zeitschrift},
    VOLUME = {215},
      YEAR = {1994},
    NUMBER = {2},
     PAGES = {237--250},
      ISSN = {0025-5874,1432-1823},
   MRCLASS = {14G25 (11G99 14E05 58F20)},
  MRNUMBER = {1259460},
MRREVIEWER = {Takeshi\ Ooe},
       DOI = {10.1007/BF02571713},
       URL = {https://doi.org/10.1007/BF02571713},
}

@article {BS91,
    AUTHOR = {Bedford, Eric and Smillie, John},
     TITLE = {Polynomial diffeomorphisms of {${\bf C}^2$}: currents,
              equilibrium measure and hyperbolicity},
   JOURNAL = {Invent. Math.},
  FJOURNAL = {Inventiones Mathematicae},
    VOLUME = {103},
      YEAR = {1991},
    NUMBER = {1},
     PAGES = {69--99},
      ISSN = {0020-9910,1432-1297},
   MRCLASS = {32H50 (32C30 32L30 58F11 58F15 58F23)},
  MRNUMBER = {1079840},
MRREVIEWER = {Feliks\ Przytycki},
       DOI = {10.1007/BF01239509},
       URL = {https://doi.org/10.1007/BF01239509},
}

@book {Fuetale,
    AUTHOR = {Fu, Lei},
     TITLE = {Etale cohomology theory},
    SERIES = {Nankai Tracts in Mathematics},
    VOLUME = {14},
   EDITION = {Revised},
 PUBLISHER = {World Scientific Publishing Co. Pte. Ltd., Hackensack, NJ},
      YEAR = {2015},
     PAGES = {x+611},
      ISBN = {978-981-4675-08-6},
   MRCLASS = {14F20 (11G25)},
  MRNUMBER = {3380806},
       DOI = {10.1142/9569},
       URL = {https://doi.org/10.1142/9569},
}

@article {MX25,
    AUTHOR = {Matsuzawa, Yohsuke and Xie, Junyi},
     TITLE = {Arithmetic degree and its application to {Z}ariski dense orbit
              conjecture},
   JOURNAL = {J. Lond. Math. Soc. (2)},
  FJOURNAL = {Journal of the London Mathematical Society. Second Series},
    VOLUME = {112},
      YEAR = {2025},
    NUMBER = {3},
     PAGES = {Paper No. e70282, 17},
      ISSN = {0024-6107,1469-7750},
   MRCLASS = {37P15 (37P55)},
  MRNUMBER = {4958582},
       DOI = {10.1112/jlms.70282},
       URL = {https://doi.org/10.1112/jlms.70282},
}

@book {silADS,
    AUTHOR = {Silverman, Joseph H.},
     TITLE = {The arithmetic of dynamical systems},
    SERIES = {Graduate Texts in Mathematics},
    VOLUME = {241},
 PUBLISHER = {Springer, New York},
      YEAR = {2007},
     PAGES = {x+511},
      ISBN = {978-0-387-69903-5},
   MRCLASS = {11-02 (11-01 11G05 11G07 11G50 37-02 37F10)},
  MRNUMBER = {2316407},
MRREVIEWER = {Thomas\ Ward},
       DOI = {10.1007/978-0-387-69904-2},
       URL = {https://doi.org/10.1007/978-0-387-69904-2},
}

@book {Liu,
    AUTHOR = {Liu, Qing},
     TITLE = {Algebraic geometry and arithmetic curves},
    SERIES = {Oxford Graduate Texts in Mathematics},
    VOLUME = {6},
      NOTE = {Translated from the French by Reinie Ern\'e,
              Oxford Science Publications},
 PUBLISHER = {Oxford University Press, Oxford},
      YEAR = {2002},
     PAGES = {xvi+576},
      ISBN = {0-19-850284-2},
   MRCLASS = {14-01 (11G30 14A05 14A15 14Gxx 14Hxx)},
  MRNUMBER = {1917232},
MRREVIEWER = {C\'icero\ Carvalho},
}

@article {matsuzawa-note-ad-dls,
    AUTHOR = {Matsuzawa, Yohsuke},
     TITLE = {Existence of arithmetic degrees for generic orbits and
              dynamical {L}ang-{S}iegel problem},
   JOURNAL = {J. Reine Angew. Math.},
  FJOURNAL = {Journal f\"ur die Reine und Angewandte Mathematik. [Crelle's
              Journal]},
    VOLUME = {825},
      YEAR = {2025},
     PAGES = {305--335},
      ISSN = {0075-4102,1435-5345},
   MRCLASS = {37P30 (11G50 37F80)},
  MRNUMBER = {4939945},
       DOI = {10.1515/crelle-2025-0038},
       URL = {https://doi.org/10.1515/crelle-2025-0038},
}

@article{xie2024algebraic,
  title={Algebraic dynamics and recursive inequalities},
  author={Xie, Junyi},
  journal={arXiv preprint arXiv:2402.12678},
  year={2024}
}

@article{DS05,
	author = {Dinh, Tien-Cuong and Sibony, Nessim},
	doi = {10.4007/annals.2005.161.1637},
	fjournal = {Annals of Mathematics. Second Series},
	issn = {0003-486X},
	journal = {Ann. of Math. (2)},
	mrclass = {32H50 (37B40 37F05)},
	mrnumber = {2180409},
	mrreviewer = {Jean-Yves Briend},
	number = {3},
	pages = {1637--1644},
	title = {Une borne sup\'{e}rieure pour l'entropie topologique d'une application rationnelle},
	url = {https://doi.org/10.4007/annals.2005.161.1637},
	volume = {161},
	year = {2005}}

@article{Wa22,
	author = {Long, WANG},
	journal = {Journal of the Mathematical Society of Japan},
	number = {1},
	pages = {1--26},
	publisher = {Mathematical Society of Japan},
	title = {Periodic points and arithmetic degrees of certain rational self-maps},
	volume = {1},
	year = {2023}}

@article {MW22,
    AUTHOR = {Matsuzawa, Yohsuke and Wang, Long},
     TITLE = {Arithmetic degrees and {Z}ariski dense orbits of
              cohomologically hyperbolic maps},
   JOURNAL = {Trans. Amer. Math. Soc.},
  FJOURNAL = {Transactions of the American Mathematical Society},
    VOLUME = {377},
      YEAR = {2024},
    NUMBER = {9},
     PAGES = {6311--6340},
      ISSN = {0002-9947,1088-6850},
   MRCLASS = {37P15 (37P05 37P30 37P55)},
  MRNUMBER = {4855313},
MRREVIEWER = {Ruofan\ Li},
       DOI = {10.1090/tran/9211},
       URL = {https://doi.org/10.1090/tran/9211},
}

@article{Ka06,
	author = {Kawaguchi, Shu},
	doi = {10.1007/s00208-006-0750-y},
	fjournal = {Mathematische Annalen},
	issn = {0025-5831},
	journal = {Math. Ann.},
	mrclass = {11G50 (14G40 32H50 37F10)},
	mrnumber = {2221115},
	mrreviewer = {Joseph H. Silverman},
	number = {2},
	pages = {285--310},
	title = {Canonical height functions for affine plane automorphisms},
	url = {https://doi.org/10.1007/s00208-006-0750-y},
	volume = {335},
	year = {2006}}

@article{Tr20,
	author = {Truong, Tuyen Trung},
	doi = {10.1515/crelle-2017-0052},
	fjournal = {Journal f\"{u}r die Reine und Angewandte Mathematik. [Crelle's Journal]},
	issn = {0075-4102},
	journal = {J. Reine Angew. Math.},
	mrclass = {37P05 (14E05 37F05 37F80)},
	mrnumber = {4048444},
	mrreviewer = {Yu Yasufuku},
	pages = {139--182},
	title = {Relative dynamical degrees of correspondences over a field of arbitrary characteristic},
	url = {https://doi.org/10.1515/crelle-2017-0052},
	volume = {758},
	year = {2020}}

@article{Da20,
	author = {Dang, Nguyen-Bac},
	doi = {10.1112/plms.12366},
	fjournal = {Proceedings of the London Mathematical Society. Third Series},
	issn = {0024-6115},
	journal = {Proc. Lond. Math. Soc. (3)},
	mrclass = {14E05 (37C35 37P55)},
	mrnumber = {4133708},
	mrreviewer = {Alexandr V. Pukhlikov},
	number = {5},
	pages = {1268--1310},
	title = {Degrees of iterates of rational maps on normal projective varieties},
	url = {https://doi.org/10.1112/plms.12366},
	volume = {121},
	year = {2020}}

@book{BG06,
	author = {Bombieri, Enrico and Gubler, Walter},
	doi = {10.1017/CBO9780511542879},
	isbn = {978-0-521-84615-8; 0-521-84615-3},
	mrclass = {11G50 (11-02 11G10 11G30 11J68 14G40)},
	mrnumber = {2216774},
	mrreviewer = {Yuri Bilu},
	pages = {xvi+652},
	publisher = {Cambridge University Press, Cambridge},
	series = {New Mathematical Monographs},
	title = {Heights in {D}iophantine geometry},
	url = {https://doi.org/10.1017/CBO9780511542879},
	volume = {4},
	year = {2006}}

@book{La83,
	author = {Lang, Serge},
	doi = {10.1007/978-1-4757-1810-2},
	isbn = {0-387-90837-4},
	mrclass = {11-02 (11Dxx 11Gxx 14G25)},
	mrnumber = {715605},
	mrreviewer = {Gerd Faltings},
	pages = {xviii+370},
	publisher = {Springer-Verlag, New York},
	title = {Fundamentals of {D}iophantine geometry},
	url = {https://doi.org/10.1007/978-1-4757-1810-2},
	year = {1983}}

@book{HS00,
	author = {Hindry, Marc and Silverman, Joseph H.},
	doi = {10.1007/978-1-4612-1210-2},
	isbn = {0-387-98975-7; 0-387-98981-1},
	mrclass = {11Gxx (11-02 11G10 11G30 11G50 14G25)},
	mrnumber = {1745599},
	mrreviewer = {Dino J. Lorenzini},
	note = {An introduction},
	pages = {xiv+558},
	publisher = {Springer-Verlag, New York},
	series = {Graduate Texts in Mathematics},
	title = {Diophantine geometry},
	url = {https://doi.org/10.1007/978-1-4612-1210-2},
	volume = {201},
	year = {2000}}

\end{document}